\def\rr{{\mathbb R}}
\def\rd{{\mathbb{R}^d}}
\def\nn{{\mathbb N}}
\def\zz{{\mathbb Z}}
\def\cc{{\mathbb C}}
\def\cx{{\mathcal X}}
\def\fz{\infty}
\def\az{\alpha}
\def\bz{\beta}
\def\dz{\delta}
\def\ez{\epsilon}
\def\gz{{\gamma}}
\def\lz{\lambda}
\def\oz{\omega}
\def\vz{\varphi}
\def\lf{\left}
\def\r{\right}
\def\hs{\hspace{0.25cm}}
\def\ls{\lesssim}
\def\noz{\nonumber}
\def\wz{\widetilde}
\def\st{\subset}
\def\loc{{\mathop\mathrm{\,loc\,}}}
\def\supp{\mathop\mathrm{\,supp\,}}
\def\dint{\displaystyle\int}
\def\rbmo{\mathop\mathrm{\,{\rm \widetilde{RBMO}}(\mu)}}
\def\kbsp{\widetilde{K}_{B,\,S}^{(\rho)}}
\def\kbrp{\widetilde{K}_{B,\,R}^{(\rho)}}
\def\dfrac{\displaystyle\frac}
\newtheorem{theorem}{Theorem}[section]
\newtheorem{lemma}[theorem]{Lemma}
\newtheorem{corollary}[theorem]{Corollary}
\newtheorem{proposition}[theorem]{Proposition}
\theoremstyle{definition}
\newtheorem{remark}[theorem]{Remark}
\newtheorem{definition}[theorem]{Definition}
\renewcommand{\appendix}{\par
   \setcounter{section}{0}%
   \setcounter{subsection}{0}%
   \setcounter{subsubsection}{0}%
   \gdef\thesection{\@Alph\c@section}%
   \gdef\thesubsection{\@Alph\c@section.\@arabic\c@subsection}%
   \gdef\theHsection{\@Alph\c@section.}%
   \gdef\theHsubsection{\@Alph\c@section.\@arabic\c@subsection}%
   \csname appendixmore\endcsname
 }
\numberwithin{equation}{section}
\begin{document}

\arraycolsep=1pt

\title{\bf\Large Boundedness of Commutators on Hardy Spaces over
Metric Measure Spaces of Non-homogeneous Type
\footnotetext{\hspace{-0.35cm} 2010
{\it Mathematics Subject Classification}. {Primary 47B47; Secondary
42B20, 42B35, 30L99.}
\endgraf{\it Key words and phrases.} geometrically doubling measure,
upper doubling measure, metric measure space of non-homogeneous type,
commutator, Calder\'on-Zygmund operator, fractional integral,
$\rm{\widetilde{RBMO}}$ function, Hardy space.
\endgraf
Haibo Lin is supported by the National Natural Science Foundation of
China (Grant Nos. 11301534 and 11471042) and Da Bei Nong Education
Fund (Grant No. 1101-2413002). Dachun Yang is supported by the
National Natural Science Foundation of China
(Grant Nos. 11571039 and 11361020),
the Specialized Research Fund for the Doctoral Program of
Higher Education of China (Grant No. 20120003110003)
and the Fundamental Research Funds for Central Universities of China
(Grant Nos. 2013YB60 and 2014KJJCA10).
}}
\author{Haibo Lin, Suqing Wu and Dachun Yang\,\footnote{Corresponding author}}
\date{}
\maketitle

\vspace{-0.8cm}

\begin{center}
\begin{minipage}{13cm}
{\small {\bf Abstract.}\quad Let $(\mathcal{X},d,\mu)$ be a
metric measure space satisfying the so-called upper doubling
condition and the geometrically doubling condition. Let $T$ be a
Calder\'{o}n-Zygmund operator with kernel satisfying only
the size condition and some H\"ormander-type condition,
and $b\in\rm{\widetilde{RBMO}(\mu)}$ (the regularized BMO
space with the discrete coefficient). In this paper, the authors
establish the boundedness of the commutator $T_b:=bT-Tb$ generated
by $T$ and $b$ from the atomic Hardy space $\widetilde{H}^1(\mu)$
with the discrete coefficient into the weak Lebesgue space
$L^{1,\,\infty}(\mu)$. The boundedness of the commutator
generated by the generalized fractional integral
$T_\alpha\,(\alpha\in(0,1))$ and the $\rm{\widetilde{RBMO}(\mu)}$
function from $\widetilde{H}^1(\mu)$ into
$L^{1/{(1-\alpha)},\,\fz}(\mu)$ is also presented. Moreover,
by an interpolation theorem for sublinear operators, the authors
show that the commutator $T_b$ is bounded on $L^p(\mu)$ for all
$p\in(1,\infty)$.
}
\end{minipage}
\end{center}

\section{Introduction}\label{s1}
\hskip\parindent
The classical theory of Calder\'{o}n-Zygmund operators
originated from the study of the convolution operator
with singular kernel on $\mathbb{R}$. From then on,
it has become one of the core research areas
in harmonic analysis and has been developed
into a large branch of analysis on metric spaces,
among which, one of the most useful underlying spaces
is the space of homogeneous type introduced by Coifman
and Weiss \cite{cw71, cw77}. Recall that a quasi-metric space
$(\cx, d)$ equipped with a non-negative measure
$\mu$ is called  a {\it space of homogeneous type} in
the sense of Coifman and Weiss \cite{cw71, cw77} if
$(\cx,d,\mu)$ satisfies the
{\it measure doubling condition}:
there exists a positive constant $C_{(\mu)}$ such that,
for all balls
$B(x,r):= \{y\in\cx:\,d(x, y)< r\}$ with
$x\in\cx$ and $r\in(0, \fz)$,
\begin{equation}\label{1.1}
\mu(B(x, 2r))\le C_{(\mu)} \mu(B(x,r)).
\end{equation}
As was well known, the space of homogeneous type is a natural
setting for Calder\'{o}n-Zygmund operators and function spaces.
Euclidean spaces equipped with Lebesgue measures, Euclidean
spaces equipped with weighted Radon measures satisfying
the doubling condition \eqref{1.1}, Heisenberg groups
equipped with left-variant Haar measures are all
the typical examples of spaces of homogeneous type.

On the other hand, in the last two decades, many classical
results concerning the Calder\'{o}n-Zygmund operators and
function spaces have been proved still valid for metric
spaces equipped with \emph{non-doubling measures};
see, for example,
\cite{ntv98, ntv03, t01, t012, t03, t0302, t14, cmy05,
cs02, hmy05,hyy09}. In particular, let $\mu$ be a
non-negative Radon measure on $\mathbb{R}^d$
which only satisfies the \emph{polynomial growth condition}
that there exist some positive constant $C_0$ and
$n\in(0,d\,]$ such that, for all $x\in\mathbb{R}^d$
and $r\in(0,\fz)$,
\begin{equation}\label{1.2}
\mu(B(x,r))\leq C_0r^n,
\end{equation}
where $B(x,r):=\{y\in{\mathbb{R}^d}:\,|x-y|<r  \}$.
Such a measure does not need to satisfy the doubling
condition \eqref{1.1}. Tolsa \cite{t01,t0302} introduced
the atomic Hardy space $H_{\rm{atb}}^{1,\,q}(\mu)$, for
$q\in(1,\fz]$, and its dual space, $\mathop\mathrm{RBMO}(\mu)$,
the \emph{space of functions with regularized bounded mean
oscillation}, with respect to $\mu$ as in \eqref{1.2},
and proved that Calder\'{o}n-Zygmund operators are bounded
from $H_{\rm{atb}}^{1,\,q}(\mu)$ into $L^1(\mu)$. In \cite{hmy05},
Hu et al. established an equivalent characterization
of $H_{\rm{atb}}^{1,\,q}(\mu)$ to obtain the boundedness on $L^p(\mu)$
of commutators and their endpoint estimates. More research on
function spaces, mainly on Morrey spaces, and their applications
related to non-doubling measures can be found in, for example,
\cite{gs13, ss14, st05, st07-2}. We point out that the analysis
on such non-doubling context plays a striking role in solving
several long-standing problems related to the analytic capacity,
like Vitushkin's conjecture or Painlev\'{e}'s problem;
see \cite{t03, t14}.

However, as was pointed out by Hyt\"onen in \cite{h10},
the measure $\mu$ satisfying the polynomial growth condition is
different from, not general than, the doubling measure.
Hyt\"onen \cite{h10} introduced a new class of metric measure
spaces satisfying both the so-called upper doubling condition
and the geometrically doubling condition (see, respectively,
Definitions \ref{d2.1} and \ref{d2.4} below), which
are also simply called \emph{metric measure spaces of
non-homogeneous type}. These metric measure spaces of
non-homogeneous type include both metric measure spaces
of homogeneous type and metric measure spaces equipped
with non-doubling measures as special cases. We mention
that several equivalent characterizations for the upper
doubling condition were recently established by Tan and
Li \cite{tl, tl-mn}.

From now on, we always assume that $(\cx,\,d,\,\mu)$ is a
metric measure space of non-homogeneous type in the sense of
Hyt\"onen \cite{h10}. In this new setting, Hyt\"{o}nen \cite{h10}
introduced the space $\mathop\mathrm{RBMO}(\mu)$ and established
the corresponding John-Nirenberg inequality, and Hyt\"onen and
Martikainen \cite{hm12} further established a version of
$Tb$ theorem. Later, Hyt\"{o}nen et al.
\cite{hyy12} and Bui and Duong \cite{ad13}, independently,
introduced the atomic Hardy space $H_{\rm{atb}}^{1,\,q}(\mu)$
and proved that the dual space of $H_{\rm{atb}}^{1,\,q}(\mu)$ is
$\mathop\mathrm{RBMO}(\mu)$.
Recently, Fu et al. \cite{fyy12, fyy14t} established the
boundedness of multilinear commutators of Calder\'on-Zygmund
operators and commutators of generalized fractional integrals
with $\mathop\mathrm{RBMO}(\mu)$.
The boundedness of commutators of multilinear singular integrals
on Lebesgue spaces was obtained by Xie et al. \cite{xgz}.
In addition, Fu et al. \cite{fyy14j} introduced a version of
the atomic Hardy space
$\wz H_{\rm{atb},\,\rho}^{1,\,q,\,\gamma}(\mu)$
($\subset H_{\rm{atb}}^{1,\,q}(\mu)$
and simply denoted by $\widetilde{H}^1(\mu)$;
see Definition \ref{d2.11} below)
and its corresponding dual space
$\rbmo$ ($\supset\mathop\mathrm{RBMO}(\mu)$;
see Definition \ref{d2.13} below) via the discrete coefficients
$\kbsp$. Moreover, Hyt\"onen and Martikainen \cite{hm14} proved
a non-homogeneous $T1$ theorem
for certain bi-parameter singular integral operators.
Very recently, Fu et al. \cite{flyy15} partially established the
theory of the Hardy space $H^p$ with $p\in(0,\,1]$ on $(\cx, d, \mu)$.
Sawano et al. \cite{sst} presented an example showing that,
if $(\cx, d, \mu)$ is not geometrically doubling, then Morrey
spaces depend on the auxiliary parameters.
More research on function spaces and the boundedness of
various operators on metric measure spaces of non-homogeneous type
can be found in \cite{ly11, hlyy, ly12, lyy12, hmy13, b13, cj,
ly14, lmy14, ccj}.
We refer the reader to the survey \cite{yyf} and the monograph
\cite{yyh} for more developments on harmonic analysis in this
setting.

The main purpose of this paper is to generalize the corresponding
results in \cite{hmy05} to the present setting $(\cx,d,\mu)$.
Precisely, let $T$ be a Calder\'{o}n-Zygmund operator with kernel
satisfying only the size condition and some H\"ormander-type condition,
and $b\in\rm{\widetilde{RBMO}(\mu)}$. Under the assumption that $T$ is
bounded on $L^2(\mu)$, we obtain the boundedness of the commutator
$$T_b:=bT-Tb,$$
generated by $T$ and $b$, from the atomic Hardy space
$\widetilde{H}^1(\mu)$ into the weak Lebesgue space
$L^{1,\,\infty}(\mu)$. The boundedness of the commutator generated
by the generalized fractional integral $T_\alpha\,(\alpha\in(0,1))$
and the $\rm{\widetilde{RBMO}(\mu)}$ function from
$\widetilde{H}^1(\mu)$ into the weak Lebesgue space
$L^{1/{(1-\alpha)},\,\fz}(\mu)$ is also established.
Moreover, by an interpolation theorem for sublinear operators,
we also show that the commutator $T_b$ is bounded on $L^p(\mu)$
for all $p\in(1,\infty)$.

This paper is organized as follows.  In Section \ref{s2},
we first recall some necessary notation and notions, including
the discrete coefficient $\kbsp$ and its fundamental properties,
the atomic Hardy space $\wz H_{\rm{atb},\,\rho}^{1,\,q,\,\gamma}(\mu)$
(simply denoted by $\widetilde{H}^1(\mu)$) and the space $\rbmo$
with $\kbsp$, and the Calder\'{o}n-Zygmund decomposition.
We also establish an equivalent characterization and the
John-Nirenberg inequality of $\rbmo$
(see, respectively, Lemma \ref{l2.15} and Proposition
\ref{p2.16} below), whose proofs are similar to those of the
corresponding known results of
$\mathop\mathrm{RBMO}(\mu)$, the details being omitted.
Moreover, in this section, we find a useful property
of the dominating function (see Lemma \ref{l2.3} below),
which is of independent interest
and is used in Section \ref{s3}.

In Section \ref{s3}, we establish the boundedness of the
commutator $T_b$ from $\wz H^1(\mu)$ into $L^{1,\fz}(\mu)$ by
borrowing some ideas from \cite[Theorem 4.1]{hmy05} and
applying Lemma \ref{l2.3}.

In Section \ref{s4}, we prove that the commutator, generated
by the generalized fractional integral
$T_\alpha\,(\alpha\in(0,1))$
and the $\rm{\widetilde{RBMO}(\mu)}$ function, is bounded
from $\widetilde{H}^1(\mu)$ into $L^{1/{(1-\alpha)},\,\fz}(\mu)$.
Recall that the fractional type of the discrete coefficient
$\kbsp$ is a useful tool in the study of commutators of fractional
integrals in the setting of metric measure spaces with non-doubling
measures or metric measure spaces of non-homogeneous type;
see, for example, \cite{cs02,fyy14t}. However, in our proof,
via the Minkowski integral inequality and the Fatou lemma,
we do not need to use the fractional coefficient, which is
a different approach  to deal with commutators of  fractional
integrals.

Section \ref{s5} is devoted to the boundedness on $L^p(\mu)$,
with $p\in(1,\fz)$, of the commutator $T_b$.
To this end, we first establish an interpolation theorem for
sublinear operators (see Theorem \ref{t5.5} below).
Although the interpolation theorem is similar to
\cite[Theorem 3.1]{hmy05}, its proof is different.
Precisely, since it is not clear whether or not the operator
$M_r^\sharp\circ T_1$ compounded by the sharp maximal operator
$M_r^\sharp$ and the sublinear operator $T_1$ is quasi-linear,
the method used in the proof of \cite[Theorem 3.1]{hmy05} might
be problematic. To avoid this, in the below proof of
Theorem \ref{t5.5}, we borrow some ideas from the proof of
\cite[Theorem 1.6]{lmy14}. Then we establish a pointwise
estimate for $M_r^\sharp\circ T_b$, which, together with
the interpolation theorem, yields the desired conclusion.

Finally, we make some conventions on notation. Throughout
this paper, we always denote by $C$, $\wz C$, $c$ or
$\wz c$ a \emph{positive constant} which is independent
of the main parameters, but they may vary from line to line.
\emph{Constants with subscripts}, such as $C_0$ and $c_0$,
do not change in different occurrences. Furthermore, we use
$C_{(\az)}$ to denote a positive constant depending on the
parameter $\az$. The expression $Y\ls Z$ means that there
exists a positive constant $C$ such that $Y\leq CZ$.
The expression $A\sim B$ means that $A\ls B\ls A$.
Let $\nn:=\{1,2,\ldots\}$ and $\zz_+:=\{0\}\cup\nn$.
For any ball $B\st\cx$, we denote its \emph{center} and
\emph{radius}, respectively,
by $c_B$ and $r_B$ and, moreover, for any $\rho\in(0,\fz)$,
we denote the ball $B(c_B,\rho r_B)$ by $\rho B$.
Given any $q\in(0,\fz)$, let $q\prime:=q/(q-1)$ denote its
\emph{conjugate index}. Also, for any subset $E\st\cx$,
$\chi_E$ denotes its \emph{characteristic function}.
For any $f\in L_{\loc}^1(\mu)$ and any measureable set
$E$ of $\cx$, $m_E(f)$ denotes its mean over $E$, namely,
$$m_E(f):=\frac1 {\mu(E)}\int_E f(x)\,d\mu(x).$$
For arbitrary $a\in\mathbb{R}$,  $\lfloor a \rfloor$ denotes
the largest integer smaller than or equal to $a$.

\section{Preliminaries}\label{s2}
\hskip\parindent
In this section, we recall some necessary notions and notation,
including the dominating function,
the discrete coefficient $\kbsp$,
the atomic Hardy space
$\wz H_{\rm{atb},\,\rho}^{1,\,q,\,\gamma}(\mu)$,
the space $\rbmo$ and the Calder\'{o}n-Zygmund decomposition.
We also give out a useful property of the dominating function.

The following notion of upper doubling metric
measure spaces was originally introduced
by Hyt\"onen \cite{h10} (see also \cite{hlyy,lyy12}).

\begin{definition}\label{d2.1}
A metric measure space $(\cx,d,\mu)$ is said to be
\emph{upper doubling} if $\mu$ is a Borel measure on $\cx$ and
there exist a \emph{dominating function}
$\lz:\cx \times (0,\fz)\to (0,\fz)$ and a positive constant
$C_{(\lz)}$, depending on $\lz$, such that, for each
$x\in \cx$, $r\to \lz(x,r)$ is
non-decreasing and, for all $x\in \cx$ and $r\in (0,\fz)$,
\begin{equation}\label{2.1}
\mu(B(x,r))\le\lz(x,r)\le C_{(\lz)}\lz(x,r/2).
\end{equation}
\end{definition}

\begin{remark}\label{r2.2}
\begin{itemize}
\item[(i)] Obviously, a space of homogeneous type is a
special case of upper doubling spaces, where we take
the dominating function $\lz(x,r):=\mu(B(x,r))$ for
all $x\in\cx$ and $r\in(0,\fz)$.
On the other hand, the $d$-dimensional Euclidean space
$\rr^d$ with any Radon measure $\mu$ as
in \eqref{1.2} is also an upper doubling
space by taking $\lz(x,r):=C_0r^{n}$ for all $x\in\rr^d$ and
$r\in(0,\fz)$.

\item[(ii)] Let $(\cx,d,\mu)$ be upper doubling with $\lz$ being
the dominating function on $\cx \times (0,\fz)$ as
in Definition \ref{d2.1}. It was proved
in \cite{hyy12} that there exists another
dominating function $\wz{\lz}$ such that
$\wz{\lz}\le \lz$, $C_{(\wz{\lz})}\le C_{(\lz)}$
and, for all $x,\,y\in \cx$ with $d(x,y)\le r$,
\begin{equation}\label{2.2}
\wz{\lz}(x,r)\le C_{(\wz{\lz})}\wz{\lz}(y,r).
\end{equation}

\item[(iii)] It was shown in \cite{tl} that the upper doubling condition
is equivalent
to the \emph{weak growth condition}:
there exist a dominating function $\lz:\,\cx\times(0,\fz)\to(0,\fz)$,
with $r\to\lz(x,r)$ non-decreasing, positive constants $C_{(\lz)}$,
depending on $\lz$, and $\ez$ such that

\begin{itemize}
\item[$\rm (iii)_1$] for all $r\in(0,\fz)$, $t\in[0,r]$, $x,\,y\in\cx$
and $d(x,y)\in[0,r]$,
$$|\lz(y,r+t)-\lz(x,r)|\le C_{(\lz)}\lf[\frac{d(x,y)+t}r\r]^{\ez}
\lz(x,r);$$

\item[$\rm (iii)_2$] for all $x\in\cx$ and $r\in(0,\fz)$,
$\mu(B(x,r))\le\lz(x,r)$.
\end{itemize}
\end{itemize}
\end{remark}

The following property of the dominating function $\lz$ is useful
and of independent interest.

\begin{lemma}\label{l2.3}
Let $(\cx,d,\mu)$ be an upper doubling space with dominating
function $\lz$ satisfying \eqref{2.2}
and ball $B\subset\cx$. Then, for any $x_1,\ x_2\in B$ and
$y\in {\cx\backslash (kB)}$ with $k\in[2,\fz)$,
it holds true that $\lz(x_1,d(x_1,y))\sim\lz(x_2,d(x_2,y))$.
\end{lemma}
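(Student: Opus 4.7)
The plan is to reduce the desired equivalence $\lz(x_1,d(x_1,y))\sim\lz(x_2,d(x_2,y))$ to two elementary ingredients: a geometric comparison of the radii $r_i:=d(x_i,y)$ via the triangle inequality, and the two structural properties of $\lz$, namely the upper doubling bound \eqref{2.1} and the center–shift bound \eqref{2.2}. By symmetry between $x_1$ and $x_2$, it suffices to prove one direction, say $\lz(x_1,r_1)\ls\lz(x_2,r_2)$.

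First I would observe that, since $x_1,x_2\in B$ and $y\in\cx\setminus(kB)$ with $k\geq 2$, one has $d(c_B,y)\geq k r_B$, so $r_1,\,r_2\geq(k-1)r_B\geq r_B$, while at the same time $d(x_1,x_2)\leq 2r_B$. The triangle inequality then gives $|r_1-r_2|\leq 2r_B\leq 2 r_i$, so in particular
\begin{equation*}
\tfrac{1}{3}r_2\leq r_1\leq 3 r_2,\qquad d(x_1,x_2)\leq 2r_B\leq 2 r_1.
\end{equation*}
This will be the only use of the hypothesis $k\geq 2$ and of $x_1,x_2\in B$.

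Next I would chain the two estimates. Using monotonicity of $\lz$ in its second argument and the upper doubling inequality \eqref{2.1} twice (in the form $\lz(x,2r)\leq C_{(\lz)}\lz(x,r)$), together with \eqref{2.2} applied at radius $2 r_1$ (which is permissible because $d(x_1,x_2)\leq 2 r_1$),
\begin{equation*}
\lz(x_1,r_1)\leq \lz(x_2,r_1)\cdot C_{(\lz)}
\leq \lz(x_2,3 r_2)\cdot C_{(\lz)}
\leq C_{(\lz)}^{3}\,\lz(x_2,r_2),
\end{equation*}
where in the first step I used \eqref{2.2} (with centers swapped) at radius $2r_1\geq r_1$, and in the last step I absorbed the factor $3\leq 4=2^{2}$ into two applications of \eqref{2.1}. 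The reverse inequality follows by the same argument with the roles of $x_1$ and $x_2$ exchanged, proving $\lz(x_1,d(x_1,y))\sim\lz(x_2,d(x_2,y))$.

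No real obstacle is anticipated: the only point worth care is that one cannot apply \eqref{2.2} directly at radius $r_1$ because $d(x_1,x_2)\leq 2r_B$ need not be $\leq r_1$ when $k$ is close to $2$; the remedy, as above, is to pass through the slightly larger radius $2r_1$ and pay a bounded number of factors of $C_{(\lz)}$ via \eqref{2.1}.
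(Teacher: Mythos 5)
Your proof is correct and takes essentially the same route as the paper's: both rest on the observations that $k\ge 2$ forces $d(x_i,y)\ge r_B$, hence $d(x_1,y)\le 3\,d(x_2,y)$ and vice versa, and then combine \eqref{2.2}, monotonicity in the radius and the doubling bound \eqref{2.1}; the paper merely pivots through $\lz(y,\cdot)$ as intermediary, while you compare $x_1$ and $x_2$ directly at the enlarged radius $2d(x_1,y)$. The only blemish is bookkeeping: the first displayed inequality really costs $C_{(\lz)}^2$ (one factor from \eqref{2.2} at radius $2d(x_1,y)$ and one from \eqref{2.1} to return to radius $d(x_1,y)$), not $C_{(\lz)}$, which only changes the harmless implicit constant.
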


\begin{proof}
Without loss of generality, we may assume that
$d(x_1,y)\leq d(x_2,y)$.
By \eqref{2.2} and the fact that
$\lz(x,r)$ is non-decreasing according to $r$, we have
$$\lz(x_1,d(x_1,y))\sim\lz(y,d(x_1,y))
\leq \lz(y,d(x_2,y))\sim\lz(x_2,d(x_2,y)).$$
Therefore, to prove Lemma \ref{l2.3}, we only need to show that
$\lz(x_2,d(x_2,y))\ls \lz(x_1,d(x_1,y))$.
Notice that, for $x_1\in B$ and $y\in {\cx\backslash (kB)}$,
$$d(x_1,y)\geq d(y,c_B)-d(x_1,c_B)>2r_B-r_B=r_B.$$
It then follows that
$$d(x_2,y)\leq d(x_2,x_1)+d(x_1,y)
<2\,r_B+d(x_1,y)\leq3\,d(x_1,y),$$
which, together with \eqref{2.2}, the assumption that
$d(x_1,y)\leq d(x_2,y)$
and \eqref{2.1}, implies that
$$\lz(x_2,d(x_2,y))\sim \lz(y,d(x_2,y))
\ls\lz(x_1,d(x_2,y))\ls\lz\lf(x_1,3\,d(x_1,y)\r)
\ls \lz(x_1,d(x_1,y)).$$
This finishes the proof of Lemma \ref{l2.3}.
\end{proof}

The following definition of geometrically  doubling is well
known in analysis on metric spaces, which can be found in
Coifman and Weiss \cite[pp.\,66-67]{cw71},
and is also known as \emph{metrically doubling}
(see, for example, \cite[p.\,81]{he}).
Moreover, spaces of homogeneous type
are geometrically doubling, which was proved
by Coifman and Weiss in \cite[pp.\,66-68]{cw71}.

\begin{definition}\label{d2.4}
A metric space $(\cx,d)$ is said to be {\it geometrically doubling}
if there exists some $N_0\in \nn$ such that, for any ball
$B(x,r)\st \cx$ with $x\in\cx$ and $r\in(0,\fz)$,
there exists a finite ball covering $\{B(x_i,r/2)\}_i$ of
$B(x,r)$ such that the cardinality of this covering is at most $N_0$.
\end{definition}

\begin{remark}\label{r2.5}
Let $(\cx,d)$ be a metric space. In \cite{h10}, Hyt\"onen showed
that the following statements are mutually equivalent:
\vspace{-0.25cm}
\begin{itemize}
  \item[\rm(i)] $(\cx,d)$ is geometrically doubling;
\vspace{-0.25cm}
  \item[\rm(ii)] for any $\ez\in (0,1)$ and any ball
  $B(x,r)\st \cx$
with $x\in\cx$ and $r\in(0,\fz)$,
there exists a finite ball covering $\{B(x_i,\ez r)\}_i$ of
$B(x,r)$ such that the cardinality of this covering is
at most $N_0\ez^{-n_0}$, here and hereafter, $N_0$ is
as in Definition \ref{d2.4} and
$n_0:=\log_2N_0$;
\vspace{-0.25cm}
  \item[\rm(iii)] for every $\ez\in (0,1)$, any ball
  $B(x,r)\st \cx$
with $x\in\cx$ and $r\in(0,\fz)$ contains
at most $N_0\ez^{-n_0}$ centers of disjoint balls
$\{B(x_i,\ez r)\}_i$;
\vspace{-0.25cm}
  \item[\rm(iv)] there exists $M\in \nn$ such that
  any ball $B(x,r)\st \cx$
with $x\in\cx$ and $r\in(0,\fz)$ contains at most
$M$ centers $\{x_i\}_i$ of
  disjoint balls $\{B(x_i, r/4)\}_{i=1}^M$.
  \end{itemize}
\end{remark}

A metric measure space $(\cx,d,\mu)$ is called  a
\emph{metric measure space of non-homogeneous type}
if $(\cx,d)$ is geometrically doubling and $(\cx,d,\mu)$
is upper doubling. Based on Remark \ref{r2.2}(ii), from now on,
we \emph{always assume} that $(\cx,d,\mu)$ is a metric measure
space of non-homogeneous type with the dominating function $\lz$
satisfying \eqref{2.2} and, for any two balls $B, S\subset\cx$,
if $B=S$, then $c_B=c_S$ and $r_B=r_S$; see \cite[pp.\,314-315]
{flyy15} for some details.

Although the measure doubling condition is not assumed uniformly
for all balls in the metric measure space $(\cx,d,\mu)$ of
non-homogeneous type, it was shown in \cite{h10} that there still
exist many balls which have the following $(\az,\bz)$-doubling
property.

\begin{definition}\label{d2.6}
Let $\az,\,\bz\in (1,\fz)$. A ball $B\st \cx$ is said to be
\emph{$(\az,\bz)$-doubling}
if $\mu(\az B)\le \bz\mu(B)$.
\end{definition}

To be precise, it was proved in \cite[Lemma 3.2]{h10} that,
if a metric measure space $(\cx,d,\mu)$ is upper doubling
and $\az,\,\bz\in(1,\fz)$ with
$\bz>[C_{(\lz)}]^{\log_2\az}=:\az^\nu$, then, for any ball
$B\st \cx$, there exists some $j\in \zz_+$ such that
$\az^jB$ is $(\az,\bz)$-doubling. Moreover, let $(\cx,d)$
be geometrically doubling, $\bz>\az^{n_0}$ with
$n_0:=\log_2N_0$ and $\mu$ a Borel measure on $\cx$ which
is finite on bounded sets. Hyt\"onen \cite[Lemma 3.3]{h10}
also showed that, for $\mu$-almost every $x\in \cx$, there
exist arbitrary small $(\az,\bz)$-doubling balls centered
at $x$. Furthermore, the radii of these balls may be chosen
to be of the form $\az^{-j}r$ for $j\in\nn$ and any preassigned
number $r\in(0, \fz)$. Throughout this article,
for any $\az\in (1,\fz)$ and ball $B$, the \emph{smallest
$(\az,\bz_\az)$-doubling ball of the form $\az^j B$ with
$j\in \zz_+$} is denoted by $\wz B^\az$, where
\begin{equation}\label{2.3}
\bz_\az:=\az^{3(\max\{n_0,\,\nu\})}+[\max\{5\az,\,30\}]^{n_0}
+[\max\{3\az,30\}]^\nu.
\end{equation}
Also, for any ball $B$ of $\cx$, we denote by $\wz{B}$ the
smallest $(6,\bz_{6})$-doubling cube of the form $6^jB$ with
$j\in{\mathbb{Z}}_+$, especially, throughout this paper.

The following discrete coefficient $\kbsp$  was first introduced
by Bui and Duong \cite{ad13} as analogous of the quantity
introduced by Tolsa \cite{t01} (see also \cite{t012, t0302})
in the setting of non-doubling measures; see also
\cite{fyy14j, flyy15}.

\begin{definition}\label{d2.7}
For any $\rho\in(1,\fz)$ and any two balls
$B\subset S\subset\cx$, let
\begin{eqnarray*}
\kbsp:=1+\sum_{k=-\lfloor \log_{\rho}2\rfloor}^
{N_{B,S}^{(\rho)}}
\frac {\mu({\rho}^kB)} {\lz(c_B,{\rho}^kr_B)},
\end{eqnarray*}
where $N_{B,S}^{(\rho)}$ is the \emph{smallest integer} satisfying
$\rho^{N_{B,S}^{(\rho)}}r_B\geq r_S$.
\end{definition}
\begin{remark}
$(\rm{i})$ By a change of variables and \eqref{2.1}, we easily
conclude that
\begin{eqnarray*}
\kbsp\sim 1+\sum_{k=1}^{N_{B,S}^{(\rho)}+
\lfloor \log_{\rho}2\rfloor+1}\frac {\mu({\rho}^kB)}
{\lz(c_B,{\rho}^kr_B)},
\end{eqnarray*}
where the implicit equivalent positive constants are
independent of balls $B\subset S\subset\cx$, but
depend on $\rho$.

$(\rm{ii})$ A continuous version, $K_{B,S}$, of the
coefficient in Definition \ref{d2.7} was introduced in
\cite{h10} and \cite{hyy12} as follows. For any two
balls $B\subset S\subset\cx$, let
\begin{eqnarray*}
K_{B,S}:=1+\int_{(2S)\backslash B}
\frac1 {\lz(c_B,d(x,c_B))}\,d\mu(x).
\end{eqnarray*}
It was proved in \cite{hyy12} that $K_{B,S}$ has all properties
similar to those for $\kbsp$ as in Lemma \ref{l2.9} below.
Unfortunately, $K_{B,S}$ and $\kbsp$ are usually not equivalent,
but, for $(\mathbb{R}^d,|\cdot|,\mu)$ with $\mu$ as in \eqref{1.2},
\begin{equation}\label{2.4}
K_{B,S}\sim \kbsp
\end{equation}
with implicit equivalent positive constants independent of
$B$ and $S$; see \cite{fyy14j} for more details on this.
\end{remark}

The following useful properties of $\kbsp$ were proved in
\cite{flyy15}.

\begin{lemma}\label{l2.9}
Let $(\cx, d, \mu)$ be a metric measure space of non-homogeneous
type.

{\rm (i)} For any $\rho\in(1,\fz)$, there exists a positive
constant $C_{(\rho)}$, depending on $\rho$, such that, for
all balls $B\subset R\subset S$, $\kbrp\leq C_{(\rho)}\kbsp$.

{\rm (ii)} For any $\az\in[1,\fz)$ and $\rho\in(1,\fz)$,
there exists a positive constant $C_{(\az,\,\rho)}$,
depending on $\az$ and $\rho$, such that, for all balls
$B\subset S$ with $r_S\leq \az r_B$, $\kbsp\leq C_{(\az,\,\rho)}$.

{\rm (iii)} For any $\rho\in(1,\fz)$, there exists a positive
constant $C_{(\rho,\,\nu)}$, depending on $\rho$ and $\nu$,
such that, for all balls
$B$, $\wz K_{B,\wz{B}^{\rho}}^{(\rho)}\leq C_{(\rho,\,\nu)}$.
Moreover, letting $\az,\,\bz\in(1,\fz)$, $B\subset S$ be any two
concentric balls such that there exists no $(\az,\bz)$-doubling
ball in the form of $\az^kB$
with $k\in\mathbb{N}$, satisfying $B\subset \az^kB\subset S$,
then there exists a positive constant $C_{(\az,\,\bz,\,\nu)}$,
depending on $\az,\,\bz$ and $\nu$, such that
$\kbsp\leq C_{(\az,\,\bz,\,\nu)}$.

{\rm (iv)} For any $\rho\in(1,\fz)$, there exists a positive
constant $c_{(\rho,\,\nu)}$, depending on $\rho$ and $\nu$,
such that, for all balls $B\subset R\subset S$,
\begin{eqnarray*}
\kbsp\leq \kbrp+c_{(\rho,\,\nu)}\wz K_{R,S}^{(\rho)}.
\end{eqnarray*}

{\rm (v)} For any $\rho\in(1,\fz)$, there exists a positive
constant $\wz{c}_{(\rho,\,\nu)}$, depending on $\rho$ and
$\nu$, such that, for all balls
$B\subset R\subset S$,
$\wz K_{R,S}^{(\rho)}\leq \wz{c}_{(\rho,\,\nu)}\kbsp$.
\end{lemma}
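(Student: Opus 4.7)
The plan is to establish each of the five items in Lemma \ref{l2.9} by direct series estimates on the definition of $\kbsp$, using only the upper doubling inequality \eqref{2.1}, the characterization of balls that fail to be $(\rho,\bz_\rho)$-doubling as yielding geometric growth of $\mu$, and (for items involving two different centers) Lemma \ref{l2.3} to transfer the basepoint. The arguments are self-contained and require no tool beyond these three.

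Parts (i) and (ii) are essentially elementary. For \textbf{(i)}, the inclusion $B\subset R\subset S$ forces $r_R\le c\,r_S$, so $N_{B,R}^{(\rho)}\le N_{B,S}^{(\rho)}+O(1)$; since each summand $\mu(\rho^kB)/\lz(c_B,\rho^kr_B)$ is non-negative and, by \eqref{2.1}, at most $1$, truncating the defining sum of $\kbsp$ at the smaller index costs only a bounded additive constant, yielding $\kbrp\le C_{(\rho)}\kbsp$. For \textbf{(ii)}, the hypothesis $r_S\le\az r_B$ forces $N_{B,S}^{(\rho)}\le\lceil\log_\rho\az\rceil$, so $\kbsp$ is a sum of $O_{\az,\rho}(1)$ terms, each bounded by $1$.

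The substance of the lemma lies in \textbf{(iii)}. Writing $\wz B^\rho=\rho^{j_0}B$ with $j_0\in\zz_+$ minimal, the failure of the $(\rho,\bz_\rho)$-doubling property for $\rho^kB$ with $0\le k<j_0$ gives $\mu(\rho^kB)\le\bz_\rho^{-(j_0-k)}\mu(\wz B^\rho)$ by iteration. On the other hand, iterating \eqref{2.1} yields
\begin{equation*}
\lz(c_B,\rho^kr_B)^{-1}\le C\,\rho^{\nu(j_0-k)}\,\lz(c_B,\rho^{j_0}r_B)^{-1},
\end{equation*}
and $\mu(\wz B^\rho)\le\lz(c_B,\rho^{j_0}r_B)$ by \eqref{2.1} again. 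Combining these, each summand of the truncated series is bounded by $C(\rho^\nu/\bz_\rho)^{j_0-k}$; by the choice \eqref{2.3} of $\bz_\rho$ this ratio is strictly less than $1$, so the geometric series sums to an absolute constant. The second claim in (iii), about the absence of $(\az,\bz)$-doubling intermediate balls $\az^kB\subset S$, is proved by the identical mechanism with $S$ in place of $\wz B^\rho$.

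Parts \textbf{(iv)} and \textbf{(v)} follow by splitting and comparing the two indexing systems. For (iv), partition the sum defining $\kbsp$ at $k=N_{B,R}^{(\rho)}$: the head contributes at most $\kbrp$, while for the tail indices $k>N_{B,R}^{(\rho)}$ one has $\rho^kB\supset R$, so Lemma \ref{l2.3} gives $\lz(c_B,\rho^kr_B)\sim\lz(c_R,\rho^kr_B)$, and the tail is comparable to a bounded multiple of $\wz K_{R,S}^{(\rho)}$. Part (v) is the reverse: each term of the $\wz K_{R,S}^{(\rho)}$-series matches, up to a bounded index shift and a bounded multiplicative constant coming from \eqref{2.1} and Lemma \ref{l2.3}, a term of $\kbsp$. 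The main obstacle throughout is (iii): the precise choice of $\bz_\rho$ in \eqref{2.3} must be shown to strictly beat $\rho^\nu$ so that the geometric ratio $\rho^\nu/\bz_\rho$ remains bounded away from $1$, uniformly in $j_0$; once this is in place, the remaining comparisons reduce to bookkeeping on the offset $\lfloor\log_\rho 2\rfloor$ and shifts of basepoint, both handled cleanly by Lemma \ref{l2.3}.
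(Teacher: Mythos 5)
A preliminary remark on comparison: the paper does not prove Lemma \ref{l2.9} at all; it quotes it from \cite{flyy15}. Your route --- direct estimates on the defining series, the failure of doubling iterated into geometric decay, and a transfer of the basepoint of the dominating function --- is the standard argument used in that reference, and your treatments of (i), (ii), (iv), (v) and of the first claim in (iii) are essentially sound. Two small technical points: in (iv) and (v) the basepoint transfer should be done directly with \eqref{2.2} (together with the monotonicity of $\lz$ and \eqref{2.1}), since Lemma \ref{l2.3} compares $\lz(x_i,d(x_i,y))$ for a point $y$ outside a dilated ball and does not literally apply at a fixed radius; and in (i) the step ``$B\subset R\subset S$ forces $r_R\le c\,r_S$'' is a convention about the formal radii of nested balls rather than a consequence of set inclusion (in a bounded space it can fail), a caveat already implicit in the statement itself.

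The genuine gap is in the second claim of (iii). There $\az,\,\bz\in(1,\fz)$ are arbitrary, so the ratio your mechanism produces, namely $\az^{\nu}/\bz$ from iterating $\mu(\az^{k+1}B)>\bz\mu(\az^kB)$ against $[\lz(c_B,\az^{k}r_B)]^{-1}\ls \az^{\nu m}[\lz(c_B,\az^{k+m}r_B)]^{-1}$, need not be less than $1$; your appeal to the choice \eqref{2.3} is unavailable here because the $\bz$ in this claim is not $\bz_\rho$. In fact no argument can close this in the stated generality: on $(\rr,|\cdot|)$ with Lebesgue measure and $\lz(x,r)=2r$ (so $\nu=1$), no ball is $(4,2)$-doubling, hence the hypothesis is vacuously satisfied by every pair of concentric balls, while $\wz K_{B,S}^{(\rho)}\sim\log_\rho(r_S/r_B)$ is unbounded; so the claim implicitly requires a restriction such as $\bz>\az^{\nu}$, which is exactly what your geometric series needs and which does hold in every application in the paper (where $\bz=\bz_\az$ as in \eqref{2.3}). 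You should state this restriction rather than assert that the ``identical mechanism'' works, and you should also indicate how to pass between the $\az$-adic non-doubling chain and the $\rho$-adic sum defining $\kbsp$ --- a bounded re-indexing via \eqref{2.1}, but not the verbatim repetition you claim.
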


\begin{lemma}\label{l2.10}
Let $(\cx,d,\mu)$ be a metric measure space of non-homogeneous
type and $\rho_1,\,\rho_2\in(1,\fz)$.
Then there exist positive constants $c_{(\rho_1,\rho_2,\nu)}$
and $C_{(\rho_1,\rho_2,\nu)}$, depending on $\rho_1,\rho_2$
and $\nu$, such that, for all balls $B\st S$,
\begin{eqnarray*}
c_{(\rho_1,\rho_2,\nu)}\wz K_{B,S}^{(\rho_1)}
\leq \wz K_{B,S}^{(\rho_2)}
\leq C_{(\rho_1,\rho_2,\nu)}\wz K_{B,S}^{(\rho_1)}.
\end{eqnarray*}
\end{lemma}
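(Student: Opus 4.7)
The plan is to compare the two series term-by-term by matching each scale $\rho_1^k$ to the unique $\rho_2^j$ lying in the same logarithmic window, and then to absorb the gap between the two radii using the upper doubling inequality \eqref{2.1}. By the symmetric form of the conclusion, it suffices to work under the additional assumption $\rho_1<\rho_2$ and establish both inequalities in this case.

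For the bound $\wz K_{B,S}^{(\rho_1)}\ls \wz K_{B,S}^{(\rho_2)}$, I would assign to each index $k$ of the $\rho_1$-sum the unique integer $j=j(k)$ satisfying $\rho_2^{j-1}<\rho_1^k\le\rho_2^j$. Then $\rho_1^kB\st\rho_2^jB$ gives $\mu(\rho_1^kB)\le\mu(\rho_2^jB)$, while $\rho_1^kr_B>\rho_2^jr_B/\rho_2$, combined with iterating \eqref{2.1} a bounded number (depending on $\rho_2$ and $\nu$) of times, yields $\lz(c_B,\rho_1^kr_B)\gs\lz(c_B,\rho_2^jr_B)$. Each $\rho_1$-term is therefore dominated by a constant multiple of the corresponding $\rho_2$-term, and at most $\lceil\log_{\rho_1}\rho_2\rceil$ distinct $k$'s can share the same $j(k)$, so the sum over $k$ is controlled by a constant times $\wz K_{B,S}^{(\rho_2)}$.

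For the reverse bound $\wz K_{B,S}^{(\rho_2)}\ls \wz K_{B,S}^{(\rho_1)}$, I would dualize the construction: to each $j$ in the $\rho_2$-sum I attach $k=k(j)$, the largest integer with $\rho_1^k\le\rho_2^j$, so that $\rho_2^j/\rho_1<\rho_1^k\le\rho_2^j$, $\rho_2^jB\st\rho_1^{k+1}B$, and $\lz(c_B,\rho_2^jr_B)\ge\lz(c_B,\rho_1^kr_B)\gs\lz(c_B,\rho_1^{k+1}r_B)$, where the last step uses \eqref{2.1} again. These estimates bound each $\rho_2$-term by a constant multiple of the $\rho_1$-term of index $k(j)+1$. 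Since $\rho_2>\rho_1$, the assignment $j\mapsto k(j)$ is strictly increasing and hence injective, so summing over $j$ produces at most a subseries of the $\rho_1$-sum, giving the required bound.

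The only real bookkeeping is at the endpoints of the two summation ranges, whose lower cutoff $-\lfloor\log_{\rho}2\rfloor$ and upper cutoff $N_{B,S}^{(\rho)}$ depend on $\rho$. However, both sums cover radii $\rho^kr_B$ lying between $r_B/2$ and approximately $\rho r_S$, so any indices unmatched by $j(\cdot)$ or $k(\cdot)$ near the boundary comprise only an $O(1)$ number of extra terms, which are harmlessly absorbed by the leading ``$1+$'' in Definition \ref{d2.7} (with Lemma \ref{l2.9}(ii) handling the top end). I expect this boundary matching to be the main piece of routine checking; the central term-by-term comparison is an immediate consequence of the monotonicity of $\lz(c_B,\cdot)$ and the upper doubling condition.
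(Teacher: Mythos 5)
Your argument is correct: the scale-matching between $\rho_1^k$ and $\rho_2^j$, combined with the monotonicity of $\lz(c_B,\cdot)$, the inclusion of the corresponding balls and the upper doubling inequality \eqref{2.1}, is exactly the standard proof of Lemma \ref{l2.10}; note that the paper itself states this lemma without proof, importing it from \cite{flyy15}, where essentially the same comparison argument is carried out. The only point worth making explicit in your boundary discussion is that each unmatched summand near the top cutoff is at most $1$, since $\mu(\rho^kB)\le\lz(c_B,\rho^k r_B)$ by \eqref{2.1}, which is precisely what allows the $O(1)$ stray terms to be absorbed into the leading ``$1+$'' of the other coefficient.
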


Now we recall the atomic Hardy space
$\wz H_{\rm{atb},\rho}^{1,q,\gz}(\mu)$
and its dual space
$\mathop\mathrm {\widetilde{RBMO}_{\rho,\,\gz}}(\mu)$
associated with $\kbsp$,
which were first introduced by Fu et al. \cite{fyy14j}.

\begin{definition}\label{d2.11}
Let $\rho\in(1,\fz)$, $q\in(1,\fz]$ and $\gz\in [1,\fz)$.
A function $b\in L^1(\mu)$ is called a
$(q,\gz,\rho)_\lz$-atomic block if

{\rm (i)} there exists a ball $B$ such that $\supp b\subset B$;

{\rm (ii)} $\int_{\cx} b(x)\,d\mu(x)=0$;

{\rm (iii)} for any $j\in\{1,2\}$, there exist a function $a_j$
supported on a ball $B_j\subset B$ and a number
$\lz_j\in \mathbb{C}$ such that $b=\lz_1a_1+\lz_2a_2$ and
\begin{eqnarray*}
\|a_j\|_{L^q(\mu)}\leq [\mu(\rho B_j)]^{1/q-1}
\lf[\wz K_{B_j,B}^{(\rho)}\r]^{-\gz}.
\end{eqnarray*}
Moreover, let
\begin{eqnarray*}
|b|_{\wz H_{\rm{atb},\rho}^{1,q,\gz}(\mu)}:=|\lz_1|+|\lz_2|.
\end{eqnarray*}
\end{definition}

A function $f\in L^1(\mu)$ is said to belong to the atomic
Hardy space $\wz H_{\rm{atb},\rho}^{1,q,\gz}(\mu)$
if there exist $(q,\gz, \rho)_\lz$-atomic blocks
$\{b_i\}_{i=1}^{\fz}$
such that $f=\sum_{i=1}^{\fz}b_i$ in $L^1(\mu)$ and
\begin{eqnarray*}
\sum_{i=1}^{\fz} |b_i|_{\wz H_{\rm{atb},\rho}^{1,q,\gz}(\mu)}<\fz.
\end{eqnarray*}
The $\wz H_{\rm{atb},\rho}^{1,q,\gz}(\mu)$ norm of $f$ is
defined by
\begin{eqnarray*}
\|f\|_{\wz H_{\rm{atb},\rho}^{1,q,\gz}(\mu)}:=
\inf\lf\{\sum_{i=1}^{\fz}
|b_i|_{\wz H_{\rm{atb},\rho}^{1,q,\gz}(\mu)}\r\},
\end{eqnarray*}
where the infimum is taken over all the possible
decompositions of $f$ as above.

\begin{remark}
{\rm (i)} When $(\cx,d,\mu)=(\mathbb{R}^d,|\cdot|,\mu)$
with $\mu$ as in \eqref{1.2},
by \eqref{2.4}, we see that
$\wz H_{\rm{atb},\rho}^{1,q,\gamma}(\mu)$
becomes the atomic Hardy space
$H_{\rm{atb},\rho}^{1,q,\gamma}(\mu)$ in \cite{t01}
for $\gamma=1$ and in \cite{hmy05} for $\gamma\in(1,\fz)$.
For general metric measure spaces of non-homogeneous type,
if we replace $\wz K_{B,S}^{(\rho)}$ by $K_{B,S}$ in
Definition \ref{d2.11}, then
$\wz H_{\rm{atb},\rho}^{1,q,\gamma}(\mu)$
becomes the atomic Hardy space
$H_{\rm{atb},\rho}^{1,q,\gamma}(\mu)$ in \cite{ad13,hyy12}.
Obviously, for $\rho\in(1,\fz)$, $q\in(1,\fz]$ and
$\gamma\in[1,\fz)$, we always have
\begin{eqnarray*}
\wz H_{\rm{atb},\rho}^{1,q,\gamma}(\mu)
\st H_{\rm{atb},\rho}^{1,q,\gamma}(\mu).
\end{eqnarray*}

{\rm (ii)} It was pointed out by Fu et al. \cite{fyy14j} that,
for each $q\in(1,\fz]$, the atomic Hardy space
$\wz H_{\rm{atb},\rho}^{1,q,\gamma}(\mu)$
is independent of the choices of $\rho$ and $\gamma$ and that,
for all $q\in(1,\fz)$, the spaces
$\wz H_{\rm{atb},\rho}^{1,q,\gamma}(\mu)$
and $\wz H_{\rm{atb},\rho}^{1,\fz,\gamma}(\mu)$ coincide
with equivalent norms. Thus, in what follows, we denote
$\wz H_{\rm{atb},\rho}^{1,q,\gamma}(\mu)$
simply by $\wz H^1(\mu)$.
\end{remark}

\begin{definition}\label{d2.13}
Let $\rho\in(1,\fz)$ and $\gz\in[1,\fz)$. A function
$f\in L_{loc}^1(\mu)$ is said to be in the space
$\mathop\mathrm {\widetilde{RBMO}_{\rho,\,\gz}}(\mu)$
if there exist
a positive constant $\wz{C}$ and, for any ball $B\subset \cx$,
a number $f_B$ such that
\begin{eqnarray}\label{2.5}
\frac1 {\mu(\rho B)} \int_B |f(x)-f_B|\,d\mu(x)\leq \wz{C}
\end{eqnarray}
and, for any two balls $B$ and $B_1$ such that $B\subset B_1$,
\begin{eqnarray}\label{2.6}
|f_B-f_{B_1}|\leq \wz{C}\lf[\wz K_{B,B_1}^{(\rho)}\r]^{\gz}.
\end{eqnarray}
The infimum of the positive constant $\wz{C}$ satisfying
both \eqref{2.5} and \eqref{2.6} is defined to be the
$\rm{\widetilde{RBMO}_{\rho,\,\gz}}(\mu)$
norm of $f$ and denoted by
$\|f\|_{\rm{\widetilde{RBMO}_{\rho,\,\gz}}(\mu)}$.
\end{definition}

\begin{remark}
{\rm (i)} It was pointed out by Fu et al. \cite{fyy14j} that
the space $\rm{\widetilde{RBMO}_{\rho,\gz}}(\mu)$ is independent
of $\rho\in(1,\fz)$ and $\gz\in[1,\fz)$. In what follows,
we denote $\rm{\widetilde{RBMO}_{\rho,\gz}}(\mu)$
simply by $\rm{\widetilde{RBMO}}(\mu)$.

{\rm (ii)} When $(\cx,d,\mu)=(\mathbb{R}^d,|\cdot|,\mu)$
with $\mu$ as in \eqref{1.2}, by \eqref{2.4}, we see that
$\rbmo$ becomes the regularized $\rm{BMO(\mu)}$ space,
$\rm{RBMO(\mu)}$, introduced in \cite{t01} for $\gz=1$
and in \cite{hmy05} for $\gz\in(1,\fz)$. For general metric
measure spaces of non-homogeneous type, if we replace
$\kbsp$ by $K_{B,S}$ in Definition \ref{d2.13}, then
$\rm{\widetilde{RBMO}}(\mu)$ becomes the space
$\rm{RBMO}(\mu)$ in \cite{h10}. Obviously, for
$\rho\in(1,\fz)$ and $\gamma\in[1,\fz)$,
$\rm{RBMO}(\mu)\st \rbmo$. However, it is still
unclear whether we always have $\rm{RBMO}(\mu)= \rbmo$
or not.

{\rm (iii)} Let $\rho\in(1,\fz)$, $p\in(1,\fz]$ and
$\gz\in[1,\fz)$. It was pointed out by Fu et al.
\cite{fyy14j} that
$[\wz H_{\rm{atb},\rho}^{1,p,\gz}(\mu)]^*=\rbmo$.

\end{remark}

By some arguments similar to those used in the proofs of
\cite[Proposition 2.10]{hyy12} and \cite[Lemma 3.2]{ly11},
we obtain the following equivalent characterization of the
space $\rbmo$, the details being omitted.

\begin{lemma}\label{l2.15}
Let $\eta,\,\rho\in(1,\fz)$, and $\bz_\rho$ be as in
\eqref{2.4}. For $f\in L_{loc}^1(\mu)$,
the following statements are equivalent:

{\rm (i)} $f\in\rbmo$;

{\rm (ii)}  there exists a positive constant $C$ such that,
for all balls $B$,
\begin{equation}\label{2.7}
\frac1 {\mu(\eta B)}\int_B |f(x)-m_{\wz B^\rho}(f)|\,d\mu(x)\leq C
\end{equation}
and, for all $(\rho,\bz_\rho)$-doubling balls $B\st S$,
\begin{equation}\label{2.8}
|m_B(f)-m_S(f)|\leq C\kbsp.
\end{equation}
Moreover, the infimum of the above constant $C$ is
equivalent to $\|f\|_{\rbmo}$.
\end{lemma}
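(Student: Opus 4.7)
\textbf{Proof proposal for Lemma \ref{l2.15}.} My plan is to follow the standard strategy used for the corresponding characterization of $\mathrm{RBMO}(\mu)$ (see \cite{t01,h10,hyy12,ly11}), namely to show that the \emph{canonical family} $f_B:=m_{\wz B^\rho}(f)$ always realizes the $\rbmo$ seminorm up to a multiplicative constant. Since the space $\rbmo$ is independent of $\gz\in[1,\fz)$ (Remark after Definition \ref{d2.13}), I may and will fix $\gz=1$ throughout the argument.

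\smallskip

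\textbf{Direction (i) $\Rightarrow$ (ii).} Fix $f\in\rbmo$ and let $\{f_B\}_{B}$ be any family satisfying \eqref{2.5} and \eqref{2.6} with constant $\wz C\le 2\|f\|_{\rbmo}$. To obtain \eqref{2.7}, I split
\begin{eqnarray*}
\frac1{\mu(\eta B)}\int_B|f(x)-m_{\wz B^\rho}(f)|\,d\mu(x)
\le \frac1{\mu(\eta B)}\int_B|f(x)-f_B|\,d\mu(x)+|f_B-m_{\wz B^\rho}(f)|.
\end{eqnarray*}
The first summand is controlled by \eqref{2.5} after a standard covering comparison between $\mu(\eta B)$ and $\mu(\rho B)$ (handling the cases $\eta\ge\rho$ and $\eta<\rho$ separately using the geometrically doubling property). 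For the second summand, I insert $f_{\wz B^\rho}$: the term $|f_B-f_{\wz B^\rho}|$ is bounded by \eqref{2.6} combined with Lemma \ref{l2.9}(iii), which controls $\wz K_{B,\wz B^\rho}^{(\rho)}$ by a constant, while $|f_{\wz B^\rho}-m_{\wz B^\rho}(f)|$ is bounded using \eqref{2.5} for the ball $\wz B^\rho$ together with its $(\rho,\bz_\rho)$-doubling property, which yields $\mu(\rho\wz B^\rho)\le\bz_\rho\mu(\wz B^\rho)$. For \eqref{2.8}, given $(\rho,\bz_\rho)$-doubling $B\st S$, I write
\begin{eqnarray*}
|m_B(f)-m_S(f)|\le |m_B(f)-f_B|+|f_B-f_S|+|f_S-m_S(f)|,
\end{eqnarray*}
bound the outer terms by constants using the doubling property as above, and bound the middle term by $\wz C\,\wz K_{B,S}^{(\rho)}$ via \eqref{2.6} with $\gz=1$.

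\smallskip

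\textbf{Direction (ii) $\Rightarrow$ (i).} Assume \eqref{2.7} and \eqref{2.8}, and define $f_B:=m_{\wz B^\rho}(f)$ for every ball $B$. Then \eqref{2.5} is an immediate consequence of \eqref{2.7} with $\eta=\rho$. The substantive step is \eqref{2.6}: given $B\st B_1$, I need to show that
\begin{eqnarray*}
|m_{\wz B^\rho}(f)-m_{\wz{B_1}^\rho}(f)|\ls \wz K_{B,B_1}^{(\rho)}.
\end{eqnarray*}
My plan is a chain argument: build a maximal sequence of $(\rho,\bz_\rho)$-doubling balls $\wz B^\rho=S_0\st S_1\st\cdots\st S_N\supset\wz{B_1}^\rho$, where each $S_{k+1}$ is the smallest $(\rho,\bz_\rho)$-doubling ball of the form $\rho^j S_k$. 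By \eqref{2.8} applied consecutively and Lemma \ref{l2.9}(iv), the telescoping sum $\sum_k|m_{S_k}(f)-m_{S_{k+1}}(f)|$ is bounded by a constant times $\wz K_{S_0,S_N}^{(\rho)}$. The final comparison between $m_{S_N}(f)$ and $m_{\wz{B_1}^\rho}(f)$ is handled by another triangle inequality using \eqref{2.7} and Lemma \ref{l2.9}(iii), since the gap between $\wz{B_1}^\rho$ and $S_N$ contains no doubling balls of the required form. Finally, Lemma \ref{l2.9}(i), (iv) and (v) let me compare $\wz K_{S_0,S_N}^{(\rho)}$ with $\wz K_{B,B_1}^{(\rho)}$, completing \eqref{2.6} with $\gz=1$.

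\smallskip

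\textbf{Main obstacle.} The delicate point is the chain argument in the reverse direction: one must carefully distinguish doubling intermediate balls (where \eqref{2.8} applies directly) from non-doubling gaps (where Lemma \ref{l2.9}(iii) must be invoked to absorb the cost into $\wz K_{B,B_1}^{(\rho)}$), and then verify that the telescoping sum of $\wz K^{(\rho)}$-terms along the chain remains bounded by $\wz K_{B,B_1}^{(\rho)}$ up to a multiplicative constant independent of $B$ and $B_1$. The norm equivalence $\inf C\sim \|f\|_{\rbmo}$ then follows by tracking the constants through both directions.
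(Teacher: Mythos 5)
Your overall skeleton (taking $f_B:=m_{\wz B^\rho}(f)$ for the direction (ii)$\Rightarrow$(i), and triangle inequalities through $\wz B^\rho$ for (i)$\Rightarrow$(ii)) is exactly the route the paper has in mind: it omits the proof and refers to the analogous arguments for $\mathop\mathrm{RBMO}(\mu)$ in \cite[Proposition 2.10]{hyy12} and \cite[Lemma 3.2]{ly11}. However, the step you yourself call substantive contains a genuine gap. You bound $|m_{\wz B^\rho}(f)-m_{\wz{B_1}^\rho}(f)|$ by the telescoping sum $\sum_k|m_{S_k}(f)-m_{S_{k+1}}(f)|\le C\sum_k \wz K_{S_k,\,S_{k+1}}^{(\rho)}$ along a maximal chain of $(\rho,\bz_\rho)$-doubling balls and claim this is $\ls \wz K_{S_0,\,S_N}^{(\rho)}$ ``by Lemma \ref{l2.9}(iv)''. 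Lemma \ref{l2.9}(iv) is subadditivity in the opposite direction ($\wz K_{B,S}^{(\rho)}\le \wz K_{B,R}^{(\rho)}+c\,\wz K_{R,S}^{(\rho)}$); what you need is $\sum_k \wz K_{S_k,\,S_{k+1}}^{(\rho)}\ls \wz K_{S_0,\,S_N}^{(\rho)}$, and this is false in general: each summand is at least $1$, so the sum is at least the number $N$ of links in your chain, while $\wz K_{S_0,\,S_N}^{(\rho)}$ need not grow with $N$. For instance, on $(\rr,|\cdot|,\,$Lebesgue$)$ with dominating function $\lz(x,r):=\max\{2r,r^2\}$ (which satisfies \eqref{2.1} and \eqref{2.2}), every ball is $(\rho,\bz_\rho)$-doubling, so your chain from $B$ to $B_1$ has $N\sim\log_\rho(r_{B_1}/r_B)$ steps, yet $\wz K_{B,B_1}^{(\rho)}\le 1+\sum_{k\ge0}2\rho^{-k}\ls1$. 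So the telescoping estimate does not close.

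The repair is not to telescope through every doubling scale at all: hypothesis \eqref{2.8} already carries the $\wz K^{(\rho)}$-cost, so it should be applied only a bounded number of times to suitably chosen nested pairs of doubling balls. For example, with $R:=\rho^m B$ the smallest concentric dilate of $B$ containing $B_1$, one has $\wz B^\rho\st \wz R^\rho$ (both are concentric dilates of $B$, and minimality of the doubling exponent gives the inclusion), so a single application of \eqref{2.8} yields $|m_{\wz B^\rho}(f)-m_{\wz R^\rho}(f)|\le C\,\wz K_{\wz B^\rho,\,\wz R^\rho}^{(\rho)}\ls \wz K_{B,B_1}^{(\rho)}+C$ by Lemma \ref{l2.9}(i)--(iv); the remaining comparison of $m_{\wz R^\rho}(f)$ with $m_{\wz{B_1}^\rho}(f)$ is handled by finitely many further applications of \eqref{2.8} to doubling dilates of $B_1$, whose coefficients are bounded by constants via Lemma \ref{l2.9}(ii)--(iv) --- this is precisely the structure of the cited proofs. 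A secondary, more minor point: in (i)$\Rightarrow$(ii) your phrase ``covering comparison between $\mu(\eta B)$ and $\mu(\rho B)$'' cannot mean $\mu(\rho B)\ls\mu(\eta B)$, which fails for non-doubling $\mu$ when $\eta<\rho$; in that case one needs the covering-plus-regularity argument behind the independence of $\rbmo$ of the parameter $\rho$ (recorded after Definition \ref{d2.13}), together with Lemma \ref{l2.10} to pass between $\wz K^{(\eta)}$ and $\wz K^{(\rho)}$, rather than a measure comparison.
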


By an argument completely analogous to that used in the proof of
\cite[Proposition 6.1]{h10}, we obtain the following John-Nirenberg
inequality for $\rbmo$, the details being omitted.

\begin{proposition}\label{p2.16}
Let $(\cx,d,\mu)$ be a metric measure space of non-homogeneous type.
Then, for every $\rho\in(0,\fz)$, there exists a positive constant $c$
such that, for all $f\in\rbmo$, balls $B_0$ and $t\in(0,\,\fz)$,
\begin{equation*}
\mu(\{x\in B_0:\,|f(x)-f_{B_0}|>t\})
\leq 2\mu(\rho B_0)e^{-ct/\|f\|_{\rbmo}},
\end{equation*}
where $f_{B_0}$ is as in Definition \ref{d2.13}
with $B$ replaced by $B_0$.
\end{proposition}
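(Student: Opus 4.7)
The plan is to adapt the Calderón–Zygmund stopping-time proof of the John--Nirenberg inequality for $\mathop\mathrm{RBMO}(\mu)$ given in \cite[Proposition 6.1]{h10}, replacing the continuous coefficient $K_{B,S}$ throughout by its discrete counterpart $\wz K_{B,S}^{(\rho)}$ and routing every BMO-type estimate through the doubling-ball characterization of Lemma \ref{l2.15}. After normalization, I may assume $\|f\|_{\rbmo}=1$ and fix, for every ball $B\subset\cx$, a constant $f_B$ as in Definition \ref{d2.13}. Define the maximal distribution
$$\Phi(t):=\sup_{B\subset\cx}\frac{\mu(\{x\in B:\,|f(x)-f_B|>t\})}{\mu(\rho B)},\qquad t\in(0,\fz).$$
It will suffice to produce a constant $c>0$ such that $\Phi(t)\le 2\,e^{-ct}$, because specializing to $B=B_0$ then yields the desired bound.

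The next step is a Whitney-type stopping-time construction. Fix $B_0$ and a threshold $\lambda\gg 1$. Using the Vitali-type selection available in the geometrically doubling space $(\cx,d)$ (Remark \ref{r2.5}), I would extract a countable pairwise disjoint family $\{B_j\}_j$ of balls centered in $B_0$ such that
$$\frac{1}{\mu(\rho B_j)}\int_{B_j}|f(x)-f_{B_0}|\,d\mu(x)\sim \lambda,$$
while $|f-f_{B_0}|\le\lambda$ almost everywhere on $B_0\setminus\bigcup_jB_j$. Combining the $\rbmo$-bound \eqref{2.5} with Chebyshev's inequality gives
$$\sum_j\mu(\rho B_j)\;\ls\;\frac{1}{\lambda}\int_{B_0}|f-f_{B_0}|\,d\mu \;\ls\;\frac{\mu(\rho B_0)}{\lambda},$$
so that choosing $\lambda$ large enough ensures $\sum_j\mu(\rho B_j)\le\tfrac{1}{2}\mu(\rho B_0)$.

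Granted a uniform bound $|f_{B_j}-f_{B_0}|\le c_1\lambda$ (discussed below), the exponential decay follows by a standard iteration: for $t>c_1\lambda$,
\begin{align*}
\mu\bigl(\{x\in B_0:\,|f(x)-f_{B_0}|>t\}\bigr)
&\le\sum_j\mu\bigl(\{x\in B_j:\,|f(x)-f_{B_j}|>t-c_1\lambda\}\bigr)\\
&\le\Phi(t-c_1\lambda)\sum_j\mu(\rho B_j)
\le\tfrac{1}{2}\,\Phi(t-c_1\lambda)\,\mu(\rho B_0).
\end{align*}
Taking the supremum over $B_0$ yields the functional inequality $\Phi(t)\le\tfrac{1}{2}\Phi(t-c_1\lambda)$, which iterates to $\Phi(t)\le 2\,e^{-ct}$ for a suitable $c>0$, completing the argument.

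The principal obstacle is the uniform comparison $|f_{B_j}-f_{B_0}|\le c_1\lambda$. The naive estimate from \eqref{2.6} gives only $|f_{B_j}-f_{B_0}|\ls [\wz K_{B_j,B_0}^{(\rho)}]^{\gz}$, and this coefficient need not be bounded by a multiple of $\lambda$ in the non-homogeneous setting. To handle this, I would replace $f_B$ by $m_{\wz B^\rho}(f)$ via Lemma \ref{l2.15}, control the passage from $B_j$ to its doubling hull $\wz B_j^{\rho}$ using Lemma \ref{l2.9}(iii), and then chain along a sequence of $(\rho,\bz_\rho)$-doubling enlargements up to $\wz B_0^\rho$ with the help of parts (i) and (iv) of Lemma \ref{l2.9}. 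Verifying that the constants obtained in this chain are independent of $j$ and $B_0$ is the technical heart of the proof; once accomplished, the entire argument transfers from \cite[Proposition 6.1]{h10} to the present setting with only notational changes, which is why the author omits the details.
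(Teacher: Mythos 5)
Your global architecture (normalize, set up a maximal distribution function, run a stopping-time construction, and iterate $\Phi(t)\le\frac12\Phi(t-c_1\lambda)$) is the same route the paper intends, since the paper simply invokes the argument of \cite[Proposition 6.1]{h10}. However, as written your sketch has two genuine gaps, and they occur exactly at the points where the non-homogeneity of $\mu$ matters. First, the covering step: a Vitali-type ($5r$-covering) selection produces a pairwise disjoint family $\{B_j\}_j$ such that the \emph{dilates} $5B_j$ cover the level set, not the $B_j$ themselves; so you cannot simultaneously have $\{B_j\}_j$ disjoint and $|f-f_{B_0}|\le\lambda$ a.e.\ on $B_0\setminus\bigcup_jB_j$, and the inclusion underlying your iteration inequality is unjustified. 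Repairing this forces you to work with the dilated balls, which in turn requires the stopping balls to be chosen $(\az,\bz)$-doubling for a sufficiently large $\az$ so that $\mu(5B_j)$ and $\mu(\rho\cdot 5B_j)$ are comparable to $\mu(B_j)$; likewise, since $\mu$ is not doubling, your Chebyshev step only gives $\sum_j\mu(\rho B_j)\ls\mu(2\rho B_0)/\lambda$ (the $B_j$ lie in a dilate of $B_0$, and one must also exchange $f_{B_0}$ for $f_{2B_0}$), which is not $\le\frac12\mu(\rho B_0)$ without further arrangement. These are precisely the devices in the Tolsa--Hyt\"onen proof that your sketch omits.

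Second, and more seriously, your proposed mechanism for the key bound $|f_{B_j}-f_{B_0}|\le c_1\lambda$ does not work. Passing to $m_{\wz B_j^{\rho}}(f)$ and then chaining $(\rho,\bz_\rho)$-doubling enlargements from $B_j$ up to $\wz B_0^{\rho}$ via \eqref{2.8} and Lemma \ref{l2.9}(i), (iv), (v) yields exactly $|m_{\wz B_j^{\rho}}(f)-m_{\wz B_0^{\rho}}(f)|\ls \wz K_{B_j,B_0}^{(\rho)}\|f\|_{\rbmo}$, i.e.\ it reproduces the naive estimate you set out to avoid; $\wz K_{B_j,B_0}^{(\rho)}$ can be arbitrarily large independently of $\lambda$, so no bookkeeping of constants can turn this chain into a bound of size $c_1\lambda$. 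In the actual argument the factor $\lambda$ does not come from coefficient chaining at all: it comes from the stopping-time selection itself --- the stopping ball is the first doubling ball at which the averaged oscillation exceeds $\lambda$, its parent (a bounded dilate) still has average at most $\lambda$, the coefficient between a ball and such a bounded dilate is uniformly bounded by Lemma \ref{l2.9}(ii), and the doubling property of the stopping ball converts the average over the parent into the desired control of $|m_{\wz B_j^{\rho}}(f)-f_{B_0}|$ by $C\lambda$. Without this ingredient the iteration does not close, so the proposal as it stands has a gap at its self-declared ``technical heart''.
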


\begin{corollary}\label{c2.17}
Let $(\cx,d,\mu)$ be a metric measure space of non-homogeneous type.
Then, for every $\rho\in(1,\fz)$ and $p\in[1,\fz)$,
there exists a constant $C$ such that, for all $f\in \rbmo$
and balls $B$,
\begin{eqnarray*}
\lf[\frac1 {\mu(\rho B)}
\int_B |f(x)-f_B|^p\,d\mu(x)\r]^{1/p}\leq C\|f\|_{\rbmo},
\end{eqnarray*}
where $f_B$ is as in Definition \ref{d2.13}.
\end{corollary}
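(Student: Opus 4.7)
The plan is to derive Corollary \ref{c2.17} from Proposition \ref{p2.16} by the standard distribution function (layer cake) identity, which is the usual mechanism by which a John-Nirenberg type exponential estimate upgrades to $L^p$-control of oscillations.

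More precisely, fix $p\in[1,\fz)$, $f\in\rbmo$ and a ball $B$, and let $f_B$ be as in Definition \ref{d2.13}. I would first write
\begin{equation*}
\int_B |f(x)-f_B|^p\,d\mu(x)
=p\int_0^{\fz} t^{p-1}\,\mu(\{x\in B:\,|f(x)-f_B|>t\})\,dt,
\end{equation*}
and then plug in the pointwise bound on the distribution function supplied by Proposition \ref{p2.16}, namely
\begin{equation*}
\mu(\{x\in B:\,|f(x)-f_B|>t\})\le 2\mu(\rho B)\,e^{-ct/\|f\|_{\rbmo}},
\end{equation*}
where $c\in(0,\fz)$ is the constant in that proposition. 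This yields
\begin{equation*}
\int_B |f(x)-f_B|^p\,d\mu(x)
\le 2p\,\mu(\rho B)\int_0^{\fz} t^{p-1} e^{-ct/\|f\|_{\rbmo}}\,dt.
\end{equation*}

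The remaining step is the substitution $s:=ct/\|f\|_{\rbmo}$, which converts the last integral into a constant multiple (a gamma function value) of $\|f\|_{\rbmo}^p$, giving
\begin{equation*}
\int_B |f(x)-f_B|^p\,d\mu(x)\le C^p\,\mu(\rho B)\,\|f\|_{\rbmo}^p,
\end{equation*}
for some positive constant $C$ depending only on $p$ and $c$. Dividing by $\mu(\rho B)$ and taking $p$-th roots gives the claim.

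There is no real obstacle here: the argument is entirely standard and the only thing to verify is that $\mu(\rho B)<\fz$ (which holds on any ball since $\mu$ is a Borel measure finite on bounded sets in our setting, by the upper doubling hypothesis \eqref{2.1}) and that the constant $c$ in Proposition \ref{p2.16} is indeed independent of $B$ and $f$, so that the resulting $C$ depends only on $p$, $\rho$ and the ambient geometric constants. The case $p=1$ is of course already contained in Definition \ref{d2.13} (up to replacing $f_B$ by $m_{\wz{B}^{\rho}}(f)$ via Lemma \ref{l2.15}), so the content is really the case $p\in(1,\fz)$.
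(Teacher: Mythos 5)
Your proof is correct and is exactly the intended derivation: the paper states Corollary \ref{c2.17} as an immediate consequence of the John--Nirenberg inequality in Proposition \ref{p2.16}, and the layer-cake identity combined with the exponential decay of the distribution function is the standard way to obtain it, with the finiteness of $\mu(\rho B)$ guaranteed by the upper doubling condition \eqref{2.1}. Nothing further is needed.
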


At the end of this section, we establish the following
Calder\'{o}n-Zygmund decomposition analogous to
\cite[Theorem 6.3]{ad13} and its proof is also analogous
to that of \cite[Theorem 6.3]{ad13}, the details being
omitted. Let $\gz_0$ be a fixed positive constant satisfying
that $\gz_0>\max\{C_{(\lz)}^{3\,\log_2 6},6^{3n_0} \}$,
where $C_{(\lz)}$ is as in \eqref{2.1} and $n_0$ is as in
Remark \ref{r2.5}(ii).

\begin{lemma}\label{l2.18}
Let $p\in[1,\fz)$, $f\in L^p(\mu)$ and
$t\in(0,\fz)$ ($t>(\gz_0)^{1/p}\|f\|_
{L^p(\mu)}/{[\mu(\cx)]^{1/p}}$
when $\mu(\cx)<\fz$). Then the following hold true.

{\rm (i)} There exists an almost disjoint family $\{6B_j\}_j$ of
balls such that $\{B_j\}_j$ is pairwise disjoint,
$$\frac{1}{\mu(6^2B_j)}\int_{B_j}|f(x)|^p\,d\mu(x)
>\frac{t^p}{\gz_0} \quad{\text for\, all\, j,}$$
$$\frac{1}{\mu(6^2\eta B_j)}\int_{\eta B_j}|f(x)|^p\,d\mu(x)
\le\frac{t^p}{\gz_0} \quad{\text for\ all\ j\ and\ all\
\eta\in(2,\,\fz)},$$
and
$$
|f(x)|\le t \quad{\text for\ \mu-almost\ every\ x\in
\cx\setminus(\cup_j6B_j).}
$$

{\rm (ii)} For each j, let $S_j$ be a
$(3\times6^2,\,C_\lz^{\log_2(3\times 6^2)+1})$-doubling ball of
the family $\{(3\times 6^2)^kB_j\}_{k\in\nn}$ and
$\oz_j:=\chi_{6B_j}/(\sum_k\chi_{6B_k})$. Then there exists a
family $\{\vz_j\}_j$ of functions such that, for each $j$,
$\supp(\vz_j)\subset S_j$, $\vz_j$ has a constant sign on $S_j$,
$$\int_\cx\vz_j(x)\,d\mu(x)=\int_{6B_j}f(x)\oz_j(x)\,d\mu(x),$$
$$\sum_j|\vz_j(x)|\le \gz t \quad{\text for\ \mu-almost\ every\
x\in\cx,}$$
where $\gz$ is some positive constant, depending only on
$(\cx,\,\mu)$, and there exists a positive constant $C$,
independent of $f$, $t$ and $j$, such that, when $p=1$,
it holds true that
$$
\|\vz_j\|_{L^\fz (\mu)}\mu(S_j)\le
C\int_\cx|f(x)\oz_j(x)|\,d\mu(x)
$$
and, when $p\in(1,\,\fz)$, it holds true that
\begin{equation*}
\lf[\int_{S_j}|\vz_j(x)|^p\,d\mu(x)\r]^{1/p}[\mu(S_j)]^{1/p'}\le
\frac{C}{t^{p-1}}\int_\cx |f(x)\oz_j(x)|^p\,d\mu(x).
\end{equation*}

{\rm (iii)} For $p\in(1,\,\fz)$, if choosing $S_j$ in
{\rm(ii)} to be the smallest $(3\times6^2,\,C_\lz^{\log_2(3\times
6^2)+1})$-doubling ball of the family
$\{(3\times 6^2)^kB_j\}_{k\in\nn}$, then
$h:=\sum_j(f\oz_j-\vz_j)\in \wz H^1(\mu)$ and there exists
a positive constant $C$, independent of $f$ and $t$, such that
$$\|h\|_{\wz H^1(\mu)}\le
\frac{C}{t^{p-1}}\|f\|_{L^p(\mu)}^p.$$
\end{lemma}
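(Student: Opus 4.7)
The plan is to adapt the strategy of the Calder\'on--Zygmund decomposition of Bui and Duong \cite[Theorem 6.3]{ad13} to the present $\kbsp$-version, treating the three parts in the order (i), (ii), (iii). The one substantive new ingredient beyond the reference is to systematically use Lemma \ref{l2.9}(iii) to absorb the $\kbsp$-factors by absolute constants whenever we pass from a ball $6B_j$ to the associated smallest doubling ball $S_j$.

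For (i), I would start from the set
\[
\boz_t:=\lf\{x\in\cx:\,\sup_{B\ni x}\frac1{\mu(6^2B)}\int_B|f(y)|^p\,d\mu(y)>\frac{t^p}{\gz_0}\r\}.
\]
Because $\gz_0>\max\{C_{(\lz)}^{3\log_2 6},6^{3n_0}\}$, a suitable Lebesgue differentiation argument (using upper doubling to compare $\mu(B)$ with $\mu(6^2B)$) yields $\{|f|>t\}\st\boz_t$ up to a $\mu$-null set, which will give the pointwise bound $|f|\le t$ off $\bigcup_j6B_j$. For each $x\in\boz_t$ I take $r_x$ to be the supremum of radii $r>0$ for which the displayed average exceeds $t^p/\gz_0$; the hypothesis $t>(\gz_0)^{1/p}\|f\|_{L^p(\mu)}/[\mu(\cx)]^{1/p}$ in the case $\mu(\cx)<\fz$ ensures $r_x<\fz$. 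A Besicovitch-type selection from $\{B(x,r_x)\}_{x\in\boz_t}$, based on the equivalent formulation Remark \ref{r2.5}(iv) of the geometrically doubling property, produces the countable subfamily $\{B_j\}_j$ with $\{B_j\}_j$ pairwise disjoint and $\{6B_j\}_j$ of bounded overlap. The two prescribed average inequalities then follow from, respectively, the very definition of $\boz_t$ and the maximality of $r_x$.

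For (ii), the existence of $S_j$ in the family $\{(3\times 6^2)^kB_j\}_{k\in\nn}$ is ensured by \cite[Lemma 3.2]{h10} (applied with $\az=3\times 6^2$). I would take $\vz_j$ to be a constant multiple of $\chi_{S_j}$, explicitly
\[
\vz_j:=\frac{1}{\mu(S_j)}\lf(\int_{6B_j}f(x)\oz_j(x)\,d\mu(x)\r)\chi_{S_j}.
\]
This is constant-sign, supported in $S_j$, matches the integral exactly, and its $L^\fz$-size for $p=1$ is immediate. For $p\in(1,\fz)$ the displayed $L^p$-bound follows by applying H\"older and then invoking the stopping inequality from (i), namely
$\mu(6B_j)\ls(t^p/\gz_0)^{-1}\int_{6B_j}|f\oz_j|^p\,d\mu$,
together with the doubling of $S_j$ to compare $\mu(S_j)$ with $\mu(3\times 6^2 B_j)$; the pointwise bound $\sum_j|\vz_j|\le\gz t$ then follows because the $S_j$'s have bounded overlap, a consequence of the disjointness of $\{B_j\}_j$ and the geometrically doubling condition. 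For (iii), I set $b_j:=f\oz_j-\vz_j$, which is supported in $S_j$ and has vanishing integral, hence is a candidate atomic block with supporting ball $S_j$ and with atoms $a_1\propto f\oz_j$ supported in $6B_j$ and $a_2\propto\vz_j$ supported in $S_j$. Taking the normalizing scalars $\lz_i^{(j)}\sim\|a_i^{\rm un}\|_{L^p(\mu)}[\mu(\rho B_i^{(j)})]^{1-1/p}[\wz K_{B_i^{(j)},S_j}^{(\rho)}]^{\gz}$ in the sense of Definition \ref{d2.11}, the factors $\wz K_{6B_j,S_j}^{(\rho)}$ and $\wz K_{S_j,S_j}^{(\rho)}$ are bounded by an absolute constant via Lemma \ref{l2.9}(iii), precisely because $S_j$ was chosen to be the \emph{smallest} doubling ball of the geometric progression. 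Summing $\sum_j(\lz_1^{(j)}+\lz_2^{(j)})$, using H\"older once more, the stopping inequality, and the disjointness of $\{B_j\}_j$, gives $\|h\|_{\wz H^1(\mu)}\ls t^{1-p}\|f\|_{L^p(\mu)}^p$.

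The main obstacle is the bookkeeping in (iii): the discrete coefficient $\wz K_{6B_j,S_j}^{(\rho)}$ must be bounded \emph{uniformly in $j$}, since any unbounded dependence would ruin the summation; Lemma \ref{l2.9}(iii) is exactly the tool designed to deliver this, and the choice of $S_j$ as the smallest doubling ball of $\{(3\times 6^2)^kB_j\}_{k\in\nn}$ in part (iii) is what makes that lemma applicable. A secondary technical point is the verification of the geometric doubling ratios for $S_j$ that underlie the constants in the $L^p$-estimate of $\vz_j$; these are routine from \eqref{2.1} and the definition of $(\az,\bz_\az)$-doubling balls.
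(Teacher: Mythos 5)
Your overall plan follows the intended route (the paper simply runs the argument of Bui and Duong, \cite[Theorem 6.3]{ad13}, i.e.\ Tolsa's Calder\'on--Zygmund decomposition, in the $\kbsp$-setting), and your treatment of (iii) --- bounding $\wz K^{(6)}_{6B_j,\,S_j}$ by an absolute constant via Lemma \ref{l2.9}(iii) because $S_j$ is the \emph{smallest} doubling ball of the progression --- is the correct mechanism. The genuine gap is in (ii). Taking $\vz_j$ to be the constant $[\mu(S_j)]^{-1}\int_{6B_j} f\oz_j\,d\mu$ times $\chi_{S_j}$ does not give the crucial bound $\sum_j|\vz_j|\le\gz t$: your justification is that the $S_j$ have bounded overlap as a consequence of the disjointness of $\{B_j\}_j$ and the geometrically doubling condition, and this is false. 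Each $S_j$ is obtained from $B_j$ by an \emph{uncontrolled} number of dilations by $3\times6^2$ (however many are needed to reach a doubling ball, and $\mu$ is not doubling), so the radii of the $S_j$ bear no relation to the mutual separation of the $B_j$; many pairwise disjoint tiny balls $B_j$ can have huge balls $S_j$ sharing a common point, and geometric doubling bounds overlap only for balls of comparable radii. This is exactly why Tolsa's construction, followed in \cite{ad13}, builds the $\vz_j$ \emph{inductively}: order the $S_j$ by nondecreasing radius, set $\vz_j:=\az_j\chi_{A_j}$ with $A_j:=S_j\cap\{\sum_{i<j}|\vz_i|\le Ct\}$, and use the $(3\times6^2,\,C_\lz^{\log_2(3\times6^2)+1})$-doubling of $S_j$ together with the stopping inequalities and $\sum_i\oz_i\le1$ to show $\mu(A_j)\ge\mu(S_j)/2$, so that the pointwise bound holds by construction and the $L^\infty$ and $L^p$ estimates follow. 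With your static choice the pointwise bound is simply not proved.

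Two related points. First, the inequality you invoke for the $L^p$-estimate of $\vz_j$, namely $\mu(6B_j)\ls (t^p/\gz_0)^{-1}\int_{6B_j}|f\oz_j|^p\,d\mu$, is not the stopping inequality of (i): part (i) gives $\mu(6^2B_j)<\gz_0\,t^{-p}\int_{B_j}|f|^p\,d\mu$ with $|f|$, not $|f\oz_j|$; passing from $|f|$ to $|f\oz_j|$ on $B_j$ requires $\oz_j\gtrsim1$ there, i.e.\ precisely the almost-disjointness of $\{6B_j\}_j$ asserted in (i), which you have not established (a fixed dilate of a pairwise disjoint family need not have bounded overlap, so this does not come for free from the selection). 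Second, Besicovitch-type covering theorems are not available in general geometrically doubling metric spaces (they already fail for Heisenberg groups), so the selection in (i) must be a Vitali/$5r$-type argument as in \cite{ad13}; Remark \ref{r2.5}(iv) only counts disjoint balls of comparable radius and cannot by itself deliver the overlap properties you use. In (iii), by contrast, the summation can be closed using only the pairwise disjointness of $\{B_j\}_j$, the bound $\sum_j\oz_j\le1$ and H\"older's inequality for series, so that part of your outline is sound once (i) and (ii) are repaired.
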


\section{Boundedness of Commutators $T_b$ on $\wz H^1(\mu)$}\label{s3}
\hskip\parindent
In this section, we consider the boundedness from $\wz H^1(\mu)$
into $L^{1,\fz}(\mu)$ of the commutator generated by the $\rbmo$
function and the Calder\'on-Zygmund operator with kernel
satisfying only the size condition and some H\"ormander-type
condition.

To be precise, let $K$ be a $\mu$-locally integrable function
on $\{\cx \times\cx\}\setminus\{(x,x):\,x\in\cx\}$ satisfying the
\emph{size condition} that there exists a positive constant $C$
such that, for all $x,\,y\in\cx$ with $x\neq y$,
\begin{equation}\label{3.1}
|K(x,y)|\leq C\frac1{\lz(x,d(x,y))},
\end{equation}
and the \emph{H\"ormander-type condition} that there exists a
positive constant $C$ such that, for any $R\in(0,\fz)$ and
$y,\,y'\in\cx$ with $d(y,y')<R$,
\begin{equation}\label{3.2}
\sum_{l=1}^\fz l\int_{6^{l}R <d(x,y)\leq 6^{l+1}R}
\lf[|K(x,y)-K(x,{y}')|+
|K(y,x)-K({y}',x)|\r] \,d\mu(x)\leq C.
\end{equation}
A linear operator $T$ is called  a \emph{Calder\'on-Zygmund operator}
with kernel $K$ satisfying \eqref{3.1} and \eqref{3.2} if, for all
$f\in L^{\fz}_b(\mu):=\{f\in L^\fz(\mu):\,\supp(f)\ {\rm is\ bounded}\}$,
\begin{equation}\label{3.3}
Tf(x):=\int_{\cx}K(x,y)f(y)\,d\mu(y),\quad x\not\in\supp(f).
\end{equation}
Let $b\in \rm \rbmo$ and $T$ be a Calder\'{o}n-Zygmund operator defined above.
The commutator $T_b$, generated by $b$ and $T$, is defined by
setting, for any suitable function $f$,
\begin{equation}\label{3.4}
T_b f:=bTf-T(bf).
\end{equation}

Now we state the main result of this section as follows.

\begin{theorem}\label{t3.1}
Let $b\in\rm \rbmo$. Assume that the Calder\'on-Zygmund operator
$T$, defined by \eqref{3.3} associated with kernel $K$ satisfying
$\eqref{3.1}$ and $\eqref{3.2}$, is bounded on $L^2(\mu)$. Then the
commutator $T_b$ defined by \eqref{3.4} is bounded from $\wz H^1(\mu)$
into $L^{1,\infty}(\mu)$, that is, there exists a positive constant
$C$ such that, for all $t\in(0,\fz)$ and all functions $f\in \wz H^1(\mu)$,
\begin{eqnarray*}
\mu(\{x\in\cx:\,|T_bf(x)|>t\})
\leq C\|b\|_{\rbmo} t^{-1}\|f\|_{\wz H^1(\mu)}.
\end{eqnarray*}
\end{theorem}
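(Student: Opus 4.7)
The plan is to reduce to the case of a single atomic block. By the atomic decomposition defining $\wz H^1(\mu)$ and a standard sublinearity argument (distributing the level set among the blocks $b_i$ in proportion to $|b_i|_{\wz H^1(\mu)}$), it suffices to show
\begin{equation*}
\mu(\{x\in\cx:\,|T_bh(x)|>t\})\le C\|b\|_{\rbmo}\,t^{-1}
\end{equation*}
uniformly in $t$ and in any fixed $(q,\gz,\rho)_\lz$-atomic block $h=\lz_1a_1+\lz_2a_2$ supported in a ball $B$, with $\int_\cx h\,d\mu=0$ and $|\lz_1|+|\lz_2|=1$. Since $\wz H^1(\mu)$ is independent of $\gz$, I may take $\gz$ as large as needed. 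Setting $b_0:=m_{\wz B^\rho}(b)$ and using linearity of $T$, I would write
\begin{equation*}
T_bh(x)=(b(x)-b_0)Th(x)-T((b-b_0)h)(x),
\end{equation*}
and split the level set into its part in $2\wz B^\rho$ (the near region) and $\cx\setminus 2\wz B^\rho$ (the far region), analysing each summand on each region.

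\textbf{Near region, $x\in 2\wz B^\rho$.} Here the approach is $L^2$-based. Chebyshev at level one reduces
\begin{equation*}
\mu(\{x\in 2\wz B^\rho:\,|(b(x)-b_0)Th(x)|>t/2\})\le \frac{2}{t}\int_{2\wz B^\rho}|b(x)-b_0|\,|Th(x)|\,d\mu(x),
\end{equation*}
and the right side is then treated by Cauchy--Schwarz: Corollary \ref{c2.17} gives $\|b-b_0\|_{L^2(2\wz B^\rho)}\ls\mu(2\rho\wz B^\rho)^{1/2}\|b\|_{\rbmo}$, while $L^2(\mu)$-boundedness of $T$ and the atomic estimate $\|a_j\|_{L^2(\mu)}\le[\mu(\rho B_j)]^{-1/2}[\wz K_{B_j,B}^{(\rho)}]^{-\gz}$ control $\|Th\|_{L^2(\mu)}$. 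The oversize ratio $[\mu(2\rho\wz B^\rho)/\mu(\rho B_j)]^{1/2}$ is absorbed by $[\wz K_{B_j,B}^{(\rho)}]^{-\gz}$ once $\gz$ is sufficiently large. For $T((b-b_0)h)$ I would decompose atomwise as
\begin{equation*}
(b-b_0)a_j=(b-m_{\wz B_j^\rho}(b))a_j+(m_{\wz B_j^\rho}(b)-b_0)a_j,
\end{equation*}
whose first piece has controlled $L^2$-norm on $B_j$ by Corollary \ref{c2.17} and whose second is an explicit multiple of $a_j$ with coefficient bounded by $C\|b\|_{\rbmo}\wz K_{B_j,B}^{(\rho)}$ (Lemma \ref{l2.9}(iii) and \eqref{2.6}); a Chebyshev--$L^2$ estimate then closes the near region.

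\textbf{Far region, $x\notin 2\wz B^\rho$.} Here I would use $\int h\,d\mu=0$ to write
\begin{equation*}
(b(x)-b_0)Th(x)=(b(x)-b_0)\int_B[K(x,y)-K(x,c_B)]h(y)\,d\mu(y),
\end{equation*}
decompose $\cx\setminus 2\wz B^\rho$ into the dyadic annuli $\{6^lr_B<d(x,c_B)\le 6^{l+1}r_B\}$, apply the H\"ormander-type condition \eqref{3.2} with $y'=c_B$, and invoke Proposition \ref{p2.16} to tame $|b(x)-b_0|$ annulus-by-annulus (the mean-drift among annuli being controlled by $\kbsp$-type quantities via Lemma \ref{l2.9} and \eqref{2.6}). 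Lemma \ref{l2.3} is the key new input: it lets one transfer the dominating function $\lz(x,d(x,y))$ appearing in \eqref{3.1} and the H\"ormander gain to the $c_B$-based quantity $\lz(c_B,d(x,c_B))$, so that \eqref{3.2} and the definition of $\kbsp$ apply directly. Integrating in $x$ then yields an $L^1(\cx\setminus 2\wz B^\rho)$ bound of order $\|b\|_{\rbmo}$, and Chebyshev gives the claimed weak-type estimate. The term $T((b-b_0)h)$ is treated analogously, using the atomwise splitting from the near region, size estimate \eqref{3.1} for the cancellation-free piece, and Lemma \ref{l2.3} to justify the resulting annular bookkeeping.

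\textbf{Principal obstacle.} The most delicate step is the far-region estimate of the cancellation-free piece $T((b-m_{\wz B_j^\rho}(b))a_j)$: with no cancellation in $y$ available, only the raw size estimate \eqref{3.1} can be used, and the annular sum produces a factor $\wz K_{B_j,B}^{(\rho)}$ via the chain $B_j\subset\wz B_j^\rho\subset B$ (Lemma \ref{l2.9}(iv) combined with Lemma \ref{l2.10}). This is exactly what the atomic factor $[\wz K_{B_j,B}^{(\rho)}]^{-\gz}$ in Definition \ref{d2.11} was designed to absorb, and choosing $\gz$ suitably large closes the estimate. Throughout, Lemma \ref{l2.3} is indispensable as the mechanism that permits freely interchanging $\lz(x,d(x,y))$, $\lz(c_B,d(x,c_B))$, and $\lz(c_{B_j},d(x,c_{B_j}))$ for $x$ in the far region.
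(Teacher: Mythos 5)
Your overall architecture (one atomic block, split $T_bh=(b-b_0)Th-T((b-b_0)h)$, near/far regions, cancellation plus \eqref{3.2} in the far region, Lemma \ref{l2.3} as the transference tool) is in the right spirit, but three of your key steps do not survive scrutiny. First, the near-region bound for $(b-b_0)Th$ by Cauchy--Schwarz over all of $2\wz B^{\rho}$ produces the factor $[\mu(2\rho\wz B^{\rho})/\mu(\rho B_j)]^{1/2}$, and this is \emph{not} absorbed by $[\wz K^{(\rho)}_{B_j,B}]^{-\gz}$ for any fixed $\gz$: already for Lebesgue measure on $\rr^d$ one has $\wz K^{(\rho)}_{B_j,B}\sim 1+\log_{\rho}(r_B/r_{B_j})$ while the measure ratio grows like $(r_B/r_{B_j})^{d}$, i.e.\ exponentially in $\wz K^{(\rho)}_{B_j,B}$; the independence of $\wz H^1(\mu)$ of $\gz$ cannot create such an absorption. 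The paper avoids this by splitting the near region into $6B_{j,i}$ (where both factors live on comparable balls, so Cauchy--Schwarz is safe) and $6S_j\setminus 6B_{j,i}$, where the size condition \eqref{3.1} gives pointwise decay and the annular sum up to the scale of $S_j$ produces exactly $[\wz K^{(6)}_{B_{j,i},S_j}]^2$, matched by the normalization $\gz=2$. Second, your treatment of the cancellation-free piece $T((b-m_{\wz B_j^{\rho}}(b))a_j)$ on the far region by the raw size estimate plus ``annular bookkeeping'' cannot work: the far region is unbounded, the annuli run to infinity, and $\int_{\cx\setminus 2\wz B^{\rho}}[\lz(c_{B_j},d(x,c_{B_j}))]^{-1}\,d\mu(x)$ is in general infinite (again Lebesgue measure), while $\wz K^{(\rho)}_{B_j,B}$ only counts the scales between $B_j$ and $B$, so no factor $\wz K^{(\rho)}_{B_j,B}$ emerges from that sum. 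The paper handles the entire term $T([m_{\wz S_j}(b)-b]h_j)$ in one stroke, with no region splitting, by the weak $(1,1)$ boundedness of $T$ (Lemma \ref{l3.2}(i)) applied to an $L^1(\mu)$ function whose norm is $\ls\|b\|_{\rbmo}|h_j|_{\wz H^1}$; this ingredient is absent from your plan and your substitute for it fails.

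Third, and most importantly, in the far region for $(b-b_0)Th$ your phrase ``invoke Proposition \ref{p2.16} to tame $|b(x)-b_0|$ annulus-by-annulus'' hides the actual crux. After removing the mean drift (which is $\ls k\|b\|_{\rbmo}$ and is exactly what the factor $l$ in \eqref{3.2} pays for), you must bound $\sum_k\int_{6^{k+1}S\setminus 6^kS}|b-m_{\wz{6^{k+1}S}}(b)|\,|K(x,y)-K(x,c_S)|\,d\mu(x)$ knowing only an $L^1$-type bound on the kernel difference with a single factor $k$. Any fixed-exponent H\"older (e.g.\ $L^2$ via Corollary \ref{c2.17} together with $L^{\fz}$--$L^1$ interpolation of the kernel difference) leaves you with a series like $\sum_k d_k^{1/2}$ with only $\sum_k k\,d_k\ls 1$ available, which does not converge. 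The mechanism that closes this is the generalized H\"older inequality between $\exp L(\mu)$ and $L\log L(\mu)$ (Lemma \ref{l3.3}): John--Nirenberg (Proposition \ref{p2.16}) controls $\|b-m_{\wz{6^{k+1}S}}(b)\|_{\exp L(\mu),6^{k+1}S}$, and the pointwise size bound \eqref{3.1} together with Lemma \ref{l2.3} shows that the $L\log L$ norm of the kernel difference on the $k$-th annulus costs only an extra logarithmic factor $\sim k$ beyond its $L^1$ norm (this is the role of the normalizer $l_k$ in the paper), which the weight $l$ in \eqref{3.2} absorbs exactly. Equivalently one can apply H\"older with exponent $p\sim k$ on the $k$-th annulus and use the linear-in-$p$ growth of the John--Nirenberg $L^p$ bounds, but some such exponential-integrability argument is indispensable, and your proposal does not supply it.
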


To prove Theorem \ref{t3.1}, we need the following two lemmas.
\begin{lemma}\label{l3.2}
Let $T$ be a Calder\'on-Zygmund operator defined by \eqref{3.3}
associated with kernel $K$ satisfying $\eqref{3.1}$ and $\eqref{3.2}$.
Assume that $T$ is bounded on $L^2(\mu)$. Then
\begin{itemize}
\item[{\rm (i)}] $T$ is bounded from $L^1(\mu)$ into $L^{1,\fz}(\mu)$;

\item[{\rm (ii)}] $T$ is bounded on $L^p(\mu)$ for all $p\in(1,\fz)$.
\end{itemize}
\end{lemma}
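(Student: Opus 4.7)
My plan is to prove the two parts by the classical Calder\'on-Zygmund paradigm, adapted to the non-homogeneous setting via Lemma \ref{l2.18}. First I will establish the weak type $(1,\,1)$ estimate in (i) by applying the Calder\'on-Zygmund decomposition at an arbitrary level $t$ to split $f=g+h$; then I will deduce boundedness on $L^p(\mu)$ for $p\in(1,\fz)$ in (ii) by Marcinkiewicz interpolation and duality.

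For (i), fix $f\in L^1(\mu)$ and $t\in(0,\fz)$ and apply Lemma \ref{l2.18} with $p=1$ to obtain the pairwise disjoint balls $\{B_j\}_j$, the doubling balls $S_j$ and the functions $\{\vz_j\}_j$. Write $g:=f\chi_{\cx\bh \cup_j 6B_j}+\sum_j\vz_j$ and $h:=\sum_j h_j$ with $h_j:=f\oz_j-\vz_j$ supported in $S_j$, satisfying $\int_\cx h_j\,d\mu=0$ and $\sum_j\|h_j\|_{L^1(\mu)}\ls\|f\|_{L^1(\mu)}$. Since $|g|\ls t$ $\mu$-a.e.\ and $\|g\|_{L^1(\mu)}\ls\|f\|_{L^1(\mu)}$, Chebyshev's inequality together with the assumed $L^2(\mu)$ boundedness of $T$ gives
$$\mu\lf(\lf\{x\in\cx:\,|Tg(x)|>t/2\r\}\r)\ls t^{-2}\|g\|_{L^2(\mu)}^2\ls t^{-2}\cdot t\|g\|_{L^1(\mu)}\ls t^{-1}\|f\|_{L^1(\mu)}.$$
Setting $\boz:=\cup_j 2S_j$ and using the doubling of each $S_j$ together with the Lemma \ref{l2.18} estimates yields $\mu(\boz)\ls t^{-1}\|f\|_{L^1(\mu)}$. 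Outside $\boz$ I will exploit the vanishing integral of $h_j$: writing
$$Th_j(x)=\int_{S_j}\lf[K(x,y)-K(x,c_{S_j})\r]h_j(y)\,d\mu(y),$$
applying Fubini and partitioning $\cx\bh 2S_j$ into the annuli $\{6^l r_{S_j}<d(x,c_{S_j})\le 6^{l+1}r_{S_j}\}$ for $l\in\nn$ reduces the bad-part estimate to the H\"ormander-type condition \eqref{3.2}, giving $\int_{\cx\bh 2S_j}|Th_j(x)|\,d\mu(x)\ls\|h_j\|_{L^1(\mu)}$. Summing over $j$ and applying Chebyshev closes the weak type estimate.

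For (ii), the range $p\in(1,2)$ follows by Marcinkiewicz interpolation between the weak type $(1,\,1)$ just proved and the hypothesized strong type $(2,\,2)$. For $p\in(2,\fz)$ I will pass to the adjoint: the formal adjoint $T^*$ has kernel $K^*(x,y):=K(y,x)$, and the symmetry of \eqref{3.2} in the roles of $x$ and $y$, together with the comparison $\lz(x,d(x,y))\sim\lz(y,d(x,y))$ (a consequence of \eqref{2.2}), shows that $K^*$ also satisfies \eqref{3.1} and \eqref{3.2}. Since $T^*$ is bounded on $L^2(\mu)$, the already-established case of (ii) applies to $T^*$ for every $p'\in(1,\,2)$; dualizing then yields the boundedness of $T$ on $L^p(\mu)$ for all $p\in(2,\fz)$.

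The main obstacle is the bad-part estimate, specifically, verifying that the annular decomposition anchored at the centers $c_{S_j}$ genuinely fits the shape of \eqref{3.2} in the non-homogeneous setting. This requires knowing that shifting the reference point from an arbitrary $y\in S_j$ to $c_{S_j}$ costs only constants controlled by the dominating function $\lz$, for which Lemma \ref{l2.3} is tailor-made. Once that comparison is in hand, the remaining verifications amount to routine applications of Lemma \ref{l2.18} and Chebyshev; the factor $l$ in front of the integrals in \eqref{3.2} provides more than enough room and, in particular, is not strictly necessary for (i).
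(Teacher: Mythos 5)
Your overall route coincides with the paper's: the paper proves (i) by exactly the Calder\'on--Zygmund scheme you outline (it simply refers to the proof of \cite[Theorem 1.6]{lmy14} for the details) and gets (ii) by Marcinkiewicz interpolation plus duality, and your verification that the adjoint kernel $K^*(x,y):=K(y,x)$ again satisfies \eqref{3.1} (via \eqref{2.2}) and \eqref{3.2} (by symmetry) is the content of the ``standard duality'' step, so part (ii) is fine. However, your proof of (i) has a genuine gap in the treatment of the bad part: you discard the set $\boz:=\cup_j 2S_j$ and assert $\mu(\boz)\ls t^{-1}\|f\|_{L^1(\mu)}$. The doubling of $S_j$ gives $\mu(2S_j)\ls\mu(S_j)$, but nothing in Lemma \ref{l2.18} bounds $\sum_j\mu(S_j)$. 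The only available measure estimate is $\sum_j\mu(6^2B_j)\ls t^{-1}\|f\|_{L^1(\mu)}$, coming from the first inequality of Lemma \ref{l2.18}(i) and the disjointness of the $B_j$; the ball $S_j$ is the smallest $(3\times 6^2,\,\cdot)$-doubling ball of the form $(3\times6^2)^kB_j$, and precisely when the intermediate dilations fail to be doubling one has $\mu(S_j)\gg\mu(6^2B_j)$. For instance, if $\mu$ is Lebesgue measure on $[0,1]$ plus an enormous mass located far away and $f$ is supported in $[0,1]$, then $S_j$ must swallow that distant mass, so $\mu(S_j)$ is not controlled by $t^{-1}\|f\|_{L^1(\mu)}$ at all; the inequality $\|\vz_j\|_{L^\fz(\mu)}\mu(S_j)\ls\int_\cx|f\oz_j|\,d\mu$ from Lemma \ref{l2.18}(ii) bounds the product, not $\mu(S_j)$ itself. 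So the Chebyshev step for the bad part, which relies on having removed all of $\cup_j2S_j$, does not close.

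The standard repair (and what the cited proof does) is to remove only the set $\cup_j 2\cdot 6B_j$, whose measure is controlled as above, and then to estimate $\int_{\cx\bh 2\cdot6B_j}|Th_j|\,d\mu$ in three pieces for each $j$: outside $2S_j$ use the cancellation of $h_j$ together with \eqref{3.2} exactly as you propose (plus the size condition \eqref{3.1} on the transition region $r_{S_j}\ls d(x,y)\ls 6r_{S_j}$ that is not covered by the annuli in \eqref{3.2}); on $2S_j$ estimate $T\vz_j$ by the H\"older inequality, the $L^2(\mu)$ boundedness of $T$ and the doubling of $S_j$, which gives $\int_{2S_j}|T\vz_j|\,d\mu\ls[\|\vz_j\|_{L^\fz(\mu)}\mu(S_j)]^{1/2}\|\vz_j\|_{L^1(\mu)}^{1/2}\ls\int_\cx|f\oz_j|\,d\mu$; and on $2S_j\bh 2\cdot6B_j$ estimate $T(\oz_jf)$ by the size condition \eqref{3.1}, where the resulting coefficient-type sum is bounded because there is no doubling ball of the prescribed form between $6B_j$ and $S_j$, i.e., by Lemma \ref{l2.9}(iii). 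With this modification your argument for (i) becomes correct (note also the harmless restriction $t>\gz_0\|f\|_{L^1(\mu)}/\mu(\cx)$ when $\mu(\cx)<\fz$, the complementary range being trivial), and part (ii) then goes through as you wrote it.
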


\begin{proof}
The proof of (i) is similar to that of
$\rm{(i)}\Longrightarrow\rm{(ii)}$ of \cite[Theorem 1.6]{lmy14},
the details being omitted. By (i), together with the
Marcinkiewicz interpolation theorem and a standard duality, we then
obtain the desired result of (ii), which completes
the proof of Lemma \ref{l3.2}.
\end{proof}

The following generalized H\"older inequality is a special case of
\cite[Lemma 4.1]{fyy12} (see also \cite[pp.\,246-247]{hmy}
in the setting of $\rd$ with $\mu$ as in  \eqref{1.2} and
\cite[Lemmas 2.2 and 2.3]{pt02} for the setting of $\rd$
with $\mu$ being the $d$-dimensional Lebesgue measure).

\begin{lemma}\label{l3.3}
There exists a positive constant $C$ such that,
for all locally integrable functions $f$ and $g$,
and all balls $B$,
\begin{equation}\label{3.5}
\dfrac{1}{\mu(2B)}\dint_B|f(x)g(x)|\,d\mu(x)\le
C\|g\|_{\exp L(\mu),B}\|f\|_{L\log L(\mu),B},
\end{equation}
where
$$\|f\|_{L\log L(\mu),B}:=\inf\lf\{s\in(0,\fz):\,
\frac 1 {\mu(2B)}
\int_B {\frac {|f(x)|} s}\log
\lf(2+{\frac {|f(x)|} s}\r)\,d\mu(x)\leq1 \r\}$$
and
$$\|f\|_{\exp L(\mu),B}:=\inf\lf\{s\in(0,\fz):\,\frac 1 {\mu(2B)}
\int_B \exp\lf(\frac {|f(x)|} s\r)\,d\mu(x)\leq2\r\}.$$
\end{lemma}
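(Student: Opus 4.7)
The plan is to exploit the fact that the two Luxemburg-type gauges on the right-hand side of \eqref{3.5} correspond to a complementary pair of Young functions, so that the lemma reduces to a single pointwise Young-type inequality that is then averaged against the common normalized measure $\frac{1}{\mu(2B)}\,d\mu|_B$ appearing in both norms. Since everything is phrased in terms of $|f|$ and $|g|$, I may assume $f,g\ge0$ throughout.

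The first, and essentially only substantive, step is the scalar inequality
$$ab\le C_1\bigl[a\log(2+a)+e^b\bigr]\quad\text{for all }a,b\ge0,$$
with an absolute constant $C_1$. The cleanest derivation invokes the classical Young inequality for the complementary pair $\Phi_0(a):=(1+a)\log(1+a)-a$ and $\Psi_0(b):=e^b-b-1$, whose derivatives $\log(1+a)$ and $e^b-1$ are mutually inverse on $[0,\fz)$. That yields $ab\le(1+a)\log(1+a)-a+e^b-b-1\le(1+a)\log(1+a)+e^b$, and $(1+a)\log(1+a)\le C_1 a\log(2+a)$ then follows by separating the cases $a\le1$ (use $\log(1+a)\le a$ together with $a\log(2+a)\ge a\log 2$) and $a>1$ (use $1+a\le2a$ and $\log(1+a)\le\log(2+a)$).

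With the scalar inequality in hand, the remainder is a direct averaging argument. The degenerate cases $\|f\|_{L\log L(\mu),B}=0$ or $\|g\|_{\exp L(\mu),B}=0$ force $f\equiv0$ or $g\equiv0$ on $B$ (up to $\mu$-null sets), so \eqref{3.5} is trivial; otherwise, for any $\alpha>\|f\|_{L\log L(\mu),B}$ and $\beta>\|g\|_{\exp L(\mu),B}$, the defining infima give
$$\frac1{\mu(2B)}\int_B\frac{f(x)}{\alpha}\log\lf(2+\frac{f(x)}{\alpha}\r)d\mu(x)\le1\quad\text{and}\quad\frac1{\mu(2B)}\int_B\exp\lf(\frac{g(x)}{\beta}\r)d\mu(x)\le2.$$
Substituting $a=f(x)/\alpha$ and $b=g(x)/\beta$ into the scalar Young inequality, multiplying by $\alpha\beta$, integrating over $B$, and dividing by $\mu(2B)$ yields
$$\frac1{\mu(2B)}\int_B f(x)g(x)\,d\mu(x)\le C_1\alpha\beta(1+2)=3C_1\alpha\beta,$$
and sending $\alpha\downarrow\|f\|_{L\log L(\mu),B}$ and $\beta\downarrow\|g\|_{\exp L(\mu),B}$ finishes the proof with $C=3C_1$.

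The main obstacle is genuinely only the scalar Young inequality of the first paragraph; once that is in place, neither the underlying measure $\mu$, the doubling or non-doubling nature of the space, nor the geometry of $B$ plays any role, so no feature of the non-homogeneous metric measure structure enters the argument. A small technical point I would handle by monotone convergence is that the two defining integral inequalities persist in the limit $\alpha\downarrow\|f\|_{L\log L(\mu),B}$, $\beta\downarrow\|g\|_{\exp L(\mu),B}$; this is immediate because the integrands $s\mapsto(f/s)\log(2+f/s)$ and $s\mapsto\exp(g/s)$ are monotone non-increasing in $s>0$.
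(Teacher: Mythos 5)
Your proof is correct. The paper itself gives no argument for this lemma: it simply quotes it as a special case of \cite[Lemma 4.1]{fyy12} (see also \cite{hmy, pt02}), and the standard proof in those sources is exactly the one you give, namely the Young inequality for the complementary pair $(1+a)\log(1+a)-a$ and $e^b-b-1$ followed by averaging against $\mu(2B)^{-1}\,d\mu|_B$ and taking the infimum over admissible $\alpha,\beta$. Two tiny remarks: the final limiting step needs no monotone convergence at all, since $\alpha$ and $\beta$ appear only in the constant $3C_1\alpha\beta$ on the right-hand side; and in the degenerate case it is worth noting that $\|g\|_{\exp L(\mu),B}=0$ does force $g=0$ $\mu$-a.e.\ on $B$ precisely because $\mu(B)\le\mu(2B)<\infty$, so the condition $\mu(2B)^{-1}\int_B\exp(|g|/s)\,d\mu\le 2$ can survive as $s\downarrow 0$ only if $g$ vanishes a.e.\ on $B$.
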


Now we can show Theorem \ref{t3.1} as follows.

\begin{proof}[Proof of Theorem \ref{t3.1}]
For each fixed $f\in \wz H^1(\mu)$, by Definition \ref{d2.11},
we have a decomposition $f=\sum_{j=1}^\fz h_j$,
where, for any $j\in\nn$, $h_j$ is an $(\infty, 2,12)_\lz$-atomic
block, $\supp h_j\subset S_j$, $S_j$ is a ball of $\cx$, and
\begin{equation*}
\sum_{j=1}^\fz |h_j|_{\wz H_{\rm{atb},12}^{1,\infty,2}(\mu)}
\leq 2\|f\|_{\wz H^1 (\mu)}.
\end{equation*}
Moreover, for each fixed $j$, we can further decompose $h_j$ as
$ h_j=r_{j,1}a_{j,1}+r_{j,2}a_{j,2}$, where,
for any $i\in\{1,2\}$, $r_{j,i}\in\cc$, $a_{j,i}$
is a bounded function supported on some ball
$B_{j,i}\subset S_j$ satisfying
\begin{eqnarray*}
\|a_{j,i}\|_{L^\infty(\mu)}
\le\lf[\mu\lf(12B_{j,i}\r)
\lf\{\wz K_{B_{j,i},S_j}^{(12)}\r\}^2\r]^{-1}
\end{eqnarray*}
and $ |h_j|_{\wz H_{\rm{atb},12}^{1,\infty,2}
(\mu)}=|r_{j,1}|+|r_{j,2}|$.
By Lemma \ref{l2.10}, we further conclude that,
for any $j$ and any $i\in \{1,2\}$,
\begin{eqnarray}\label{3.6}
\|a_{j,i}\|_{L^\infty(\mu)}
\ls\lf[\mu\lf(12B_{j,i}\r)
\lf\{\wz K_{B_{j,i},S_j}^{(6)}\r\}^2\r]^{-1}.
\end{eqnarray}
Write
$$
T_bf=\sum_{j=1}^\fz \lf[b-m_{\wz S_j}(b)\r]Th_j
+T\lf(\sum_{j=1}^\fz \lf[m_{\wz S_j}(b)-b(\cdot)\r]h_j\r)
=:T_b^{\rm I}f+T_b^{\rm II}f.
$$

By Lemma \ref{l3.2}, we know that $T$ is bounded from
$L^1(\mu)$ into $L^{1,\infty}(\mu)$. It then follows that
\begin{eqnarray}\label{3.7}
\mu(\{x\in\cx:\ \lf|T_b^{\rm{II}}f(x)\r|>t\})
&&\ls\frac1 t\sum_{j=1}^\fz \int_{S_j}
\lf|b(x)-m_{\wz S_j}(b)\r|\lf|h_j(x)\r| \,d\mu(x)\\
&&\ls\frac1 t\sum_{j=1}^\fz\lf[|r_{j,1}|\int_{B_{j,1}}
\lf|b(x)-m_{\wz S_j}(b)\r||a_{j,1}(x)|\,d\mu(x)\r.\noz\\
&&\hs\hs+\lf.|r_{j,2}|\int_{B_{j,2}}
\lf|b(x)-m_{\wz S_j}(b)\r||a_{j,2}(x)|\,d\mu(x)\r]\noz\\
&&=:\frac1 t\sum_{j=1}^\fz({\rm E}+{\rm F}).\noz
\end{eqnarray}
By \eqref{2.8} and Lemma \ref{l2.9}, we have
$$
\lf|m_{\wz S_j}(b)-m_{\wz B_{j,1}}(b)\r|
\ls \wz K_{\wz B_{j,1},\wz S_j}^{(6)}\|b\|_{\rbmo}
\ls \wz K_{B_{j,1},S_j}^{(6)}\|b\|_{\rbmo},
$$
which, together with \eqref{2.7} and \eqref{3.6}, leads to
\begin{eqnarray*}
{\rm E}&&\ls|r_{j,1}|\|a_{j,1}\|_{L^\infty(\mu)}
\lf[\int_{B_{j,1}}
\lf|b(x)-m_{\wz B_{j,1}}(b)\r| \,d\mu(x)
+\lf|m_{\wz S_j}(b)-m_{\wz B_{j,1}}(b)\r|\mu(B_{j,1})\r]\\
&&\ls\|b\|_{\rbmo}|r_{j,1}|\lf[\mu(12B_{j,1})
\lf\{\wz K_{B_{j,1},S_j}^{(6)}\r\}^2\r]^{-1}
\lf[\mu(2B_{j,1})+\wz K_{B_{j,1},S_j}^{(6)}\mu(B_{j,1})\r]\\
&&\ls\|b\|_{\rbmo}|r_{j,1}|.
\end{eqnarray*}
Similarly,
\begin{eqnarray*}
{\rm F}\ls\|b\|_{\rbmo}|r_{j,2}|.
\end{eqnarray*}
Combining $\rm E$ and $\rm F$, we conclude that
\begin{equation}\label{3.8}
\mu(\{x\in\cx:\,|T_b^{\rm{II}}f(x)|>t\})
\ls\|b\|_{\rbmo}t^{-1}\|f\|_{\wz H^1(\mu)}.
\end{equation}

Now we turn to estimate $T_b^{\rm I}f$. Write
\begin{eqnarray*}
\mu(\{x\in\cx:\,|T_b^{\rm I}f(x)|>t\})
&&\leq t^{-1}\sum_{j=1}^\fz \int_{6S_j}
\lf|b(x)-m_{\wz{S_j}}(b)\r||Th_j(x)|\,d\mu(x)\\
&&\quad+t^{-1}\sum_{j=1}^\fz
\int_{\cx\backslash{6S_j}}
\cdots=:{\rm G}+{\rm H}.
\end{eqnarray*}

We first estimate $\rm G$. For each fixed $j$, write
\begin{eqnarray*}
\int_{6S_j}\lf|b(x)-m_{\wz{S_j}}(b)\r||Th_j(x)|\,d\mu(x)
&&\leq|r_{j,1}|\int_{6S_j}
\lf|b(x)-m_{\wz{S_j}}(b)\r|\lf|Ta_{j,1}(x)\r|\,d\mu(x)\\
&&\hs\hs+|r_{j,2}|\int_{6S_j}
\lf|b(x)-m_{\wz{S_j}}(b)\r|\lf|Ta_{j,2}(x)\r|\,d\mu(x)\\
&&=:{\rm L}_{j,1}+{\rm L}_{j,2}.
\end{eqnarray*}
Since the two terms ${\rm L}_{j,1}$ and ${\rm L}_{j,2}$ can
be estimated in a similar way, we only deal with
${\rm L}_{j,1}$. Write
\begin{eqnarray*}
{\rm L}_{j,1}&&\leq|r_{j,1}|\int_{6S_j\backslash{6B_{j,1}}}
\lf|b(x)-m_{\wz{S_j}}(b)\r|\lf|Ta_{j,1}(x)\r|\,d\mu(x)\\
&&\quad+|r_{j,1}|\int_{6B_{j,1}}
\lf|b(x)-m_{\wz{6B_{j,1}}}(b)\r|\lf|Ta_{j,1}(x)\r|\,d\mu(x)\\
&&\quad+|r_{j,1}|\lf|m_{\wz{6B_{j,1}}}(b)-m_{\wz{S_j}}(b)\r|
\int_{6B_{j,1}}\lf|Ta_{j,1}(x)\r|\,d\mu(x)=
:{\rm U}_j+{\rm V}_j+{\rm W}_j.
\end{eqnarray*}
The H\"{o}lder inequality, together with \eqref{2.8},
Lemma \ref{l2.9}, the boundedness of $T$ on $L^2(\mu)$ and
\eqref{3.6}, implies that
\begin{eqnarray*}
{\rm W}_j&&\ls\|b\|_{\rbmo}\lf|r_{j,1}\r|
\wz K_{\wz{6B_{j,1}},\wz{S_j}}^{(6)}
\lf\|Ta_{j,1}\r\|_{L^2(\mu)}
\lf[\mu\lf(6B_{j,1}\r)\r]^{\frac1 2}\\
&&\ls\|b\|_{\rbmo}\lf|r_{j,1}\r| \wz K_{B_{j,1},S_j}^{(6)}
\lf\|a_{j,1}\r\|_{L^2(\mu)}
\lf[\mu\lf(6B_{j,1}\r)\r]^{\frac1 2}\\
&&\ls\|b\|_{\rbmo}\lf|r_{j,1}\r| \wz K_{B_{j,1},S_j}^{(6)}
\mu\lf(12B_{j,1}\r)
\lf\|a_{j,1}\r\|_{L^\fz(\mu)}
\ls\|b\|_{\rbmo}\lf|r_{j,1}\r|.
\end{eqnarray*}
From the H\"older inequality, Corollary \ref{c2.17},
the boundedness of $T$ on $L^2(\mu)$ and \eqref{3.6},
we deduce that
\begin{eqnarray*}
{\rm V}_j&&\ls\lf|r_{j,1}\r|\lf[\int_{6B_{j,1}}
\lf|b(x)-m_{\wz{6B_{j,1}}}(b)\r|^2\,d\mu(x)\r]^{\frac1 2}
\lf\|Ta_{j,1}\r\|_{L^2(\mu)}\\
&&\ls\|b\|_{\rbmo}\lf|r_{j,1}\r|
\lf[\mu(12B_{j,1})\r]^{\frac1 2}\|a_{j,1}\|_{L^2(\mu)}
\ls\|b\|_{\rbmo}\lf|r_{j,1}\r|.
\end{eqnarray*}
To estimate ${\rm U}_j$, we first observe that,
for $x\notin 6B_{j,1}$
and $y\in B_{j,1}$, $d(x,y)\sim d(x,c_{B_{j,1}})$.
It then follows from \eqref{3.1} that, for $x\notin 6B_{j,1}$,
\begin{eqnarray*}
\lf|Ta_{j,1}(x)\r|&&=\lf|\int_{\cx} K(x,y)a_{j,1}(y)\,d\mu(y)\r|
\leq \int_{B_{j,1}} \lf|K(x,y)\r||a_{j,1}(y)|\,d\mu(y)\\
&&\ls\frac{\|a_{j,1}\|_{L^1(\mu)}} {\lz(x,d(x,c_{B_{j,1}}))}
\ls\frac{\|a_{j,1}\|_{L^\fz(\mu)}\mu(B_{j,1})}
{\lz(x,d(x,c_{B_{j,1}}))}.
\end{eqnarray*}
Let $N_1:=N_{6B_{j,1},6S_j}^{(6)}+\lfloor \log_{6}2\rfloor+1$.
A straightforward computation, via the above estimate,
\eqref{2.7}, \eqref{2.8}, Lemma \ref{l2.9}, \eqref{2.2}
and \eqref{3.6}, shows that
\begin{eqnarray*}
{\rm U}_j
&&\ls|r_{j,1}|\|a_{j,1}\|_{L^\fz(\mu)}\mu(B_{j,1})
\int_{6S_j\backslash{6B_{j,1}}} \frac{|b(x)-m_
{\wz{S_j}}(b)|} {\lz(x,d(x,c_{B_{j,1}}))}\,d\mu(x)\\
&&\ls|r_{j,1}|\|a_{j,1}\|_{L^\fz(\mu)}\mu(B_{j,1})
\sum_{k=1}^{N_1} \lf[\int_{6^{k+1}B_{j,1}
\backslash{6^kB_{j,1}}}\frac{|b(x)-m_{\wz{6^{k+1}
B_{j,1}}}(b)|}{\lz(x,d(x,c_{B_{j,1}}))}\,d\mu(x)\r.\\
&&\lf.\quad+\lf|m_{\wz{6^{k+1}B_{j,1}}}(b)-m_{\wz{S_j}}(b)\r|
\int_{6^{k+1}B_{j,1}\backslash{6^kB_{j,1}}}
\frac1{\lz(x,d(x,c_{B_{j,1}}))}\,d\mu(x)\r]\\
&&\ls\|b\|_{\rbmo}|r_{j,1}|\|a_{j,1}\|_{L^{\fz}(\mu)}
\mu(B_{j,1})\\
&&\hs\hs\times\sum_{k=1}^{N_1}
\lf[\frac{\mu(6^{k+2}B_{j,1})} {\lz(c_{B_{j,1}},6^{k+2}
r_{B_{j,1}})}+\wz K_{B_{j,1},S_j}^{(6)}\frac{\mu(6^{k+1}
B_{j,1})} {\lz(c_{B_{j,1}},6^{k+1}r_{B_{j,1}})}\r]\\
&&\ls\|b\|_{\rbmo}|r_{j,1}|\|a_{j,1}\|_{L^{\fz}(\mu)}
\mu(B_{j,1})\wz K_{B_{j,1},S_j}^{(6)}\lf[1+\sum_{k=1}^{N_1}
\frac{\mu(6^kB_{j,1})} {\lz(c_{B_{j,1}},6^kr_{B_{j,1}})}\r]\\
&&\ls\|b\|_{\rbmo}|r_{j,1}|\|a_{j,1}\|_{L^{\fz}(\mu)}
\mu(B_{j,1})\lf[\wz K_{B_{j,1},S_j}^{(6)}\r]^2
\ls\|b\|_{\rbmo}|r_{j,1}|.
\end{eqnarray*}
Combining the estimates for ${\rm U}_j$, ${\rm V}_j$
and ${\rm W}_j$, we obtain
$$ {\rm L}_{j,1}\ls\|b\|_{\rbmo}\lf|r_{j,1}\r|,$$
which further implies that
$$ {\rm G}\ls t^{-1}\|b\|_{\rbmo}\sum_{j=1}^\fz
|h_j|_{\wz H_{\rm{atb},12}^{1,\infty,2}(\mu)}
\ls t^{-1}\|b\|_{\rbmo}\|f\|_{\wz H^1(\mu)}.$$

It remains to estimate ${\rm H}$. The vanishing moment of $h_j$,
together with the Fubini theorem, implies that
\begin{eqnarray*}
{\rm H}&&=t^{-1}\sum_{j=1}^\fz \int_{\cx\backslash{6S_j}}
\lf|b(x)-m_{\wz{S_j}}(b)\r|\lf|
\int_{S_j}\lf[K(x,y)-K(x,c_{S_j})\r]h_j(y)\,d\mu(y)\r|\,d\mu(x)\\
&&\leq t^{-1}\sum_{j=1}^\fz \int_{S_j}|h_j(y)|
\int_{\cx\backslash{6S_j}}\lf|b(x)-m_{\wz{S_j}}(b)\r|
\lf|K(x,y)-K(x,c_{S_j})\r|\,d\mu(x)\,d\mu(y).
\end{eqnarray*}
For each fixed $j$, write
\begin{eqnarray*}
&&\int_{\cx\backslash{6S_j}}\lf|b(x)-m_{\wz{S_j}}(b)\r|
\lf|K(x,y)-K(x,c_{S_j})\r|\,d\mu(x)\\
&&\hs\leq\sum_{k=1}^\fz
\int_{6^{k+1}S_j\backslash6^kS_j}
\lf|b(x)-m_{\wz{6^{k+1}S_j}}(b)\r|
\lf|K(x,y)-K(x,c_{S_j})\r|\,d\mu(x)\noz\\
&&\hs\hs+\sum_{k=1}^\fz \int_{6^{k+1}S_j\backslash6^kS_j}
\lf|m_{\wz{6^{k+1}S_j}}(b)-m_{\wz{S_j}}(b)\r|
|K(x,y)-K(x,c_{S_j})|\,d\mu(x)\noz\\
&&\hs=:{\rm H}_1+{\rm H}_2.
\end{eqnarray*}
By \eqref{2.8} and Lemma \ref{l2.9}, we have
\begin{eqnarray}\label{3.9}
\lf|m_{\wz{6^{k+1}S_j}}(b)-m_{\wz{S_j}}(b)\r|
&&\ls\wz K_{\wz{S_j},\wz{6^{k+1}S_j}}^{(6)}\|b\|_{\rbmo}\\
&&\ls \wz K_{S_j,6^{k+1}S_j}^{(6)}\|b\|_{\rbmo}
\ls k\|b\|_{\rbmo},\noz
\end{eqnarray}
which, together with \eqref{3.2}, implies that,
for any $y\in S_j$,
\begin{eqnarray*}
{\rm H}_2\ls\|b\|_{\rbmo}\sum_{k=1}^\fz
k\int_{6^{k+1}S_j\backslash6^kS_j}
|K(x,y)-K(x,c_{S_j})|\,d\mu(x)\ls\|b\|_{\rbmo}.
\end{eqnarray*}
For ${\rm H}_1$, from \eqref{3.5} and Proposition \ref{p2.16},
we deduce that
\begin{eqnarray*}
{\rm H}_1&&\ls\sum_{k=1}^\fz \mu(2\cdot6^{k+1}S_j)
\lf\|b-m_{\wz{6^{k+1}S_j}}(b)\r\|_
{\exp L(\mu),6^{k+1}S_j}\\
&&\hs\hs\times\lf\|\lf[K(\cdot,y)-K(\cdot,c_{S_j})\r]
\chi_{{6^{k+1}S_j}\backslash{6^kS_j}}\r\|_
{L\log L(\mu),6^{k+1}S_j}\\
&&\ls\|b\|_{\rbmo}\sum_{k=1}^\fz \mu(2\cdot6^{k+1}S_j)
\lf\|\lf[K(\cdot,y)-K(\cdot,c_{S_j})\r]
\chi_{{6^{k+1}S_j}\backslash{6^kS_j}}\r\|_
{L\log L(\mu),6^{k+1}S_j}.
\end{eqnarray*}
Choose
\begin{eqnarray*}
l_k:=\lf[\mu\lf(2\cdot6^{k+1}S_j\r)\r]^{-1}
\lf[k\int_{{6^{k+1}S_j}\backslash{6^kS_j}}
\lf|K(x,y)-K(x,c_{S_j})\r|d\mu(x)+2^{-k}\r].
\end{eqnarray*}
By \eqref{3.1}, \eqref{2.2} and Lemma \ref{l2.3},
we conclude that, for any $y\in S_j$,
\begin{eqnarray*}
&&\frac 1 {\mu\lf(2\cdot6^{k+1}S_j\r)}
\int_{{6^{k+1}S_j}\backslash{6^kS_j}}
\frac {|K(x,y)-K(x,c_{S_j})|} {l_k}
\log\lf(2+{\frac {|K(x,y)-K(x,c_{S_j})|}
{l_k}}\r)\,d\mu(x)\\
&&\hs\ls \frac1 {\mu\lf(2\cdot6^{k+1}S_j\r)}
\int_{{6^{k+1}S_j}\backslash{6^kS_j}}
\frac {|K(x,y)-K(x,c_{S_j})|} {l_k}\\
&&\hs\hs \times \log\lf(2+{\frac 1 {l_k\lz(x,d(x,y))}}
+{\frac 1 {l_k\lz(x,d(x,c_{S_j}))}}\r)\,d\mu(x)\\
&&\hs\ls \frac1 {\mu\lf(2\cdot6^{k+1}S_j\r)}
\int_{{6^{k+1}S_j}\backslash{6^kS_j}}
\frac {|K(x,y)-K(x,c_{S_j})|} {l_k}
\log\lf(2+\frac1 {l_k\lz(c_{S_j},d(x,c_{S_j}))}\r)
\,d\mu(x)\\
&&\hs\ls \frac1 {\mu\lf(2\cdot6^{k+1}S_j\r)}
\log\lf(2+\frac{2^k\mu(2\cdot6^{k+1}S_j)}
{\lz(c_{S_j},6^kr_{S_j})}\r)
\int_{{6^{k+1}S_j}\backslash{6^kS_j}}
\frac {|K(x,y)-K(x,c_{S_j})|} {l_k}\,d\mu(x)\\
&&\hs\ls \frac {k} {\mu\lf(2\cdot6^{k+1}S_j\r)
}\int_{{6^{k+1}S_j}\backslash{6^kS_j}}
\frac {|K(x,y)-K(x,c_{S_j})|} {l_k}\,d\mu(x)\ls 1,
\end{eqnarray*}
which implies that
\begin{eqnarray*}
\lf\|\lf\{K(\cdot,y)-K(\cdot,c_{S_j})\r\}
\chi_{{6^{k+1}S_j}\backslash{6^kS_j}}\r\|_{L\log
L(\mu),6^{k+1}S_j}
\ls l_k.
\end{eqnarray*}
From this and \eqref{3.2}, it follows that
\begin{eqnarray*}
{\rm H}_1&&\ls\|b\|_{\rbmo}\sum_{k=1}^\fz
\mu\lf(2\cdot6^{k+1}S_j\r)l_k\\
&&\ls\|b\|_{\rbmo}\sum_{k=1}^\fz
\lf[k\int_{{6^{k+1}S_j}\backslash{6^kS_j}}
|K(x,y)-K(x,c_{S_j})|\,d\mu(x)+2^{-k}\r]\\
&&\ls\|b\|_{\rbmo}.
\end{eqnarray*}
Combining the estimates for $\rm H_1$ and $\rm H_2$,
we then obtain
$${\rm H}\ls t^{-1}\|b\|_{\rbmo}\sum_{j=1}^\fz
\|h_j\|_{L^1(\mu)}
\ls t^{-1}\|b\|_{\rbmo}\|f\|_{\wz H^1(\mu)}.$$

We finally conclude that
\begin{eqnarray*}
\mu\lf(\{x\in\cx:\,|T_b^{\rm I}f(x)|>t \}\r)
\ls\|b\|_{\rbmo}t^{-1}\|f\|_{\wz H^1(\mu)},
\end{eqnarray*}
which, together with the estimate $\eqref{3.8}$,
completes the proof of Theorem \ref{t3.1}.
\end{proof}

\begin{remark}\label{r3.4}
Let $b\in\mathop\mathrm{RBMO}(\mu)$. Fu et al.
\cite[Theorem 3.10]{fyy12} obtained the boundedness
on Lebesgue spaces $L^p(\mu)$ with $p\in(1,\fz)$ of
the commutator $T_b$ generated by $b$ and $T$ with
kernel satisfying \eqref{3.1} and the following
\emph{stronger regularity condition}, that is, there exist
positive constants $C$, $\dz\in(0,1]$ and $c_{(K)}$,
depending on $K$, such that, for all $x,\ \wz{x},\ y\in\cx$
with $d(x,y)\geq c_{(K)}d(x,\wz{x})$,
\begin{equation}\label{3.10}
\lf|K(x,y)-K(\wz{x},y)\r|+\lf|K(y,x)-K(y,\wz{x})\r|\leq C\frac{[d(x,\wz{x})]^\delta}{[d(x,y)]^\delta\lz(x,d(x,y))}.
\end{equation}
A new example of  the operator with kernel satisfying
\eqref{3.1} and \eqref{3.10} is the so-called
Bergman-type operator appearing in \cite{vw};
see also \cite{hm12} for an explanation.
Notice that $\mathop\mathrm{RBMO}(\mu)\subset\rbmo$.
Theorem \ref{t3.1} also holds true for the commutator $T_b$
generated by $b\in\mathop\mathrm{RBMO}(\mu)$ and $T$
with kernel satisfying \eqref{3.1} and \eqref{3.10}.
Moreover, when $b\in\rbmo$ and $T$ with kernel satisfying
\eqref{3.1} and \eqref{3.2}, we also prove that the commutator
$T_b$ is bounded on $L^p(\mu)$ for all $p\in(1,\fz)$, which
improves \cite[Theorem 3.10]{fyy12}; see Section \ref{s5} below.
\end{remark}

\section{Boundedness of Commutators $T_{\az,b}$ on
$\wz H^1(\mu)$}\label{s4}
\hskip\parindent
In this section, we establish the boundedness
from $\wz H^1(\mu)$ into $L^{1/(1-\az),\fz}(\mu)$
of the commutator generated by the generalized
fractional integral $T_\alpha\,(\alpha\in(0,1))$
and the $\rm{\widetilde{RBMO}(\mu)}$ function.
We begin with the definition of the generalized
fractional integral.
\begin{definition}
Let $\az\in(0,1)$. A function
$K_\az\in L_{{\rm loc}}^1(\{\cx\times\cx\}\backslash
{\{(x,x):\,x\in\cx\}})$
is called a \emph{generalized fractional integral kernel}
if there exists a positive constant $C_{(K_\az)}$,
depending only on $K_\az$, such that

(i) for all $x,\,y\in\cx$ with $x\not=y$,
\begin{eqnarray}\label{4.1}
\lf|K_\az(x,y)\r|\leq C_{(K_\az)}\frac1
{[\lz(x,d(x,y))]^{1-\az}};
\end{eqnarray}

(ii) there exist positive constant $\delta\in(0,1]$
and $c_{(K_\az)}\in(0,\fz)$, depending only on $K_\az$,
such that, for all $x,\,\wz{x},\,y\in\cx$ with
$d(x,y)\geq c_{(K_\az)}d(x,\wz{x})$,
\begin{equation}\label{4.2}
\lf|K_\az(x,y)-K_\az(\wz{x},y)\r|+\lf|K_\az(y,x)-
K_\az(y,\wz{x})\r|
\leq C_{(K_\az)}{\frac{[d(x,\wz{x})]^\delta}
{[d(x,y)]^\delta[\lz(x,d(x,y))]^{1-\az}}}.
\end{equation}

A linear operator $T_\az$ is called a
\emph{generalized fractional integral}
with kernel $K_\az$ satisfying $\eqref{4.1}$ and
$\eqref{4.2}$ if, for all $f\in L_b^\fz(\mu)$
and $x\notin\supp f$,
\begin{eqnarray}\label{4.3}
T_{\az}f(x):=\int_\cx K_\az(x,y)f(y)\,d\mu(y).
\end{eqnarray}
\end{definition}
Let $b\in\rm \rbmo$ and $T_\az$ be the generalized
fractional integral. The the commutator $T_{\az,b}$,
generated by $b$ and $T_\az$, is defined by setting,
for any suitable function $f$,
\begin{equation}\label{4.4}
T_{\az,b}f:=b{T_\az}f-T_\az(bf).
\end{equation}

Now we state the main result of this section as follows.

\begin{theorem}\label{t4.2}
Let $\az\in(0,1)$ and $b\in\rm \rbmo$.
Assume that the generalized fractional integral
$T_\az$, defined by \eqref{4.3} associated with
kernel  $K_\az$ satisfying $\eqref{4.1}$ and
$\eqref{4.2}$, is bounded from $L^p(\mu)$ into
$L^q(\mu)$ for all $p\in(1,1/\az)$ and $1/q=1/p-\az$.
Then the commutator $T_{\az,b}$ defined by \eqref{4.4}
is bounded from $\wz H^1(\mu)$ into $L^{1/{(1-\az)},\fz}(\mu)$,
that is, there exists a positive constant $C$ such that,
for all $t\in(0,\fz)$ and all functions $f\in \wz H^1(\mu)$,
\begin{eqnarray*}
\lf[\mu(\{x\in\cx:\,|T_{\az,b}f(x)|>t\})\r]^{1-\az}
\leq C\|b\|_{\rbmo}t^{-1}\|f\|_{\wz H^1(\mu)}.
\end{eqnarray*}
\end{theorem}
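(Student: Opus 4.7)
The plan is to follow the overall strategy of the proof of Theorem~\ref{t3.1}, adapting it to the fractional character of $T_\az$ and to the weaker target space $L^{1/(1-\az),\,\fz}(\mu)$. For $f\in\wz H^1(\mu)$, I would start from the atomic decomposition $f=\sum_{j=1}^\fz h_j$, where each $h_j$ is an $(\fz,2,12)_\lz$-atomic block decomposable as $h_j=r_{j,1}a_{j,1}+r_{j,2}a_{j,2}$ with $\supp(a_{j,i})\st B_{j,i}\st S_j$ and the atom-size bound \eqref{3.6}. Writing
$$
T_{\az,b}f=\sum_{j=1}^\fz[b-m_{\wz S_j}(b)]T_\az h_j+T_\az\lf(\sum_{j=1}^\fz[m_{\wz S_j}(b)-b]h_j\r)=:T_{\az,b}^{\rm I}f+T_{\az,b}^{\rm II}f,
$$
I would treat the two pieces separately and recombine using the sub-additivity $(A+B)^{1-\az}\le A^{1-\az}+B^{1-\az}$ (valid since $1-\az\in(0,1)$).

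For $T_{\az,b}^{\rm II}f$, I would first upgrade the strong $L^p$-$L^q$ hypothesis (for $p\in(1,1/\az)$ and $1/q=1/p-\az$) to the weak endpoint $T_\az:L^1(\mu)\to L^{1/(1-\az),\,\fz}(\mu)$ by the standard Calder\'on-Zygmund decomposition argument based on Lemma~\ref{l2.18} and the kernel regularity \eqref{4.2}, paralleling the reasoning behind Lemma~\ref{l3.2}(i). Applying this endpoint bound yields
$$
[\mu(\{x\in\cx:\,|T_{\az,b}^{\rm II}f(x)|>t/2\})]^{1-\az}\ls t^{-1}\sum_{j=1}^\fz\int_{S_j}|b(x)-m_{\wz S_j}(b)||h_j(x)|\,d\mu(x),
$$
and the right-hand side is then dominated by $t^{-1}\|b\|_{\rbmo}\|f\|_{\wz H^1(\mu)}$ through precisely the same $\rbmo$ bookkeeping used in the estimation of the terms $\rm E$ and $\rm F$ in the proof of Theorem~\ref{t3.1}, invoking \eqref{2.7}, \eqref{2.8}, Lemma~\ref{l2.9} and \eqref{3.6}.

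For $T_{\az,b}^{\rm I}f$, I would combine Chebyshev's inequality in $L^{1/(1-\az)}(\mu)$ with the Minkowski integral inequality (the interchange with the infinite sum being justified by the Fatou lemma) to reduce to
$$
t\,[\mu(\{x\in\cx:\,|T_{\az,b}^{\rm I}f(x)|>t/2\})]^{1-\az}\ls\sum_{j=1}^\fz\|[b-m_{\wz S_j}(b)]T_\az h_j\|_{L^{1/(1-\az)}(\mu)},
$$
and split each summand as $\|\cdot\|_{L^{1/(1-\az)}(6S_j)}+\|\cdot\|_{L^{1/(1-\az)}(\cx\setminus 6S_j)}$. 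The local piece, after a further sub-split into $6B_{j,i}$ and $6S_j\setminus 6B_{j,i}$ parallel to Theorem~\ref{t3.1}, is handled by H\"older with the conjugate pair $(q,p')$ satisfying $1/(1-\az)=1/q+1/p'$ where $p\in(1,1/\az)$ and $1/q=1/p-\az$, combined with the strong $L^p$-$L^q$ bound of $T_\az$, Corollary~\ref{c2.17} and Lemma~\ref{l2.9}, producing the expected bound $|r_{j,i}|\|b\|_{\rbmo}$. For the global piece, the vanishing moment of $h_j$ together with \eqref{4.2} delivers the pointwise estimate
$$
|T_\az h_j(x)|\ls\frac{r_{S_j}^{\dz}\|h_j\|_{L^1(\mu)}}{[d(x,c_{S_j})]^{\dz}[\lz(x,d(x,c_{S_j}))]^{1-\az}},\quad x\in\cx\setminus 6S_j,
$$
which is then integrated against $|b-m_{\wz S_j}(b)|^{1/(1-\az)}$ over the annuli $\{6^{k+1}S_j\setminus 6^kS_j\}_{k\ge 1}$. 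Lemma~\ref{l2.3} lets me replace $\lz(x,d(x,c_{S_j}))$ by $\lz(c_{S_j},6^kr_{S_j})$, while the $\rbmo$ factor is decomposed as $|b-m_{\wz S_j}(b)|\le|b-m_{\wz{6^{k+1}S_j}}(b)|+|m_{\wz{6^{k+1}S_j}}(b)-m_{\wz S_j}(b)|$, the first summand being controlled via Corollary~\ref{c2.17} and the second by $k\|b\|_{\rbmo}$ as in \eqref{3.9}. The geometric decay $6^{-k\dz/(1-\az)}$ and the upper doubling of $\lz$ absorb the resulting polynomial factor in $k$, yielding per $j$ the bound $\ls\|h_j\|_{L^1(\mu)}\|b\|_{\rbmo}\ls(|r_{j,1}|+|r_{j,2}|)\|b\|_{\rbmo}$, which sums to $\|b\|_{\rbmo}\|f\|_{\wz H^1(\mu)}$.

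The hard part will be this global estimate: the standard approach to commutators of fractional integrals (see, e.g., \cite{fyy14t}) would introduce a fractional analogue of the coefficient $\wz K_{B,S}^{(\rho)}$ to couple the tail decay of $|T_\az h_j|$ with the $\rbmo$ growth of $b$, but no such fractional analogue is currently available for the discrete coefficient $\wz K_{B,S}^{(\rho)}$. Invoking the Minkowski integral inequality together with the Fatou lemma is what allows me to keep $|T_\az h_j|$ and $|b-m_{\wz S_j}(b)|^{1/(1-\az)}$ together inside a single $L^{1/(1-\az)}$ integral and to sum over $j$ only at the end, thereby circumventing the need for such a fractional coefficient. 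A secondary but non-trivial step will be establishing, in this non-homogeneous setting, the weak endpoint $T_\az:L^1(\mu)\to L^{1/(1-\az),\,\fz}(\mu)$ used in the analysis of $T_{\az,b}^{\rm II}$, which is expected to follow from Lemma~\ref{l2.18} by the standard argument alluded to in Lemma~\ref{l3.2}(i).
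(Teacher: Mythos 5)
Your proposal follows essentially the same route as the paper: the same splitting $T_{\az,b}=T_{\az,b}^{\rm I}f+T_{\az,b}^{\rm II}f$, the same Minkowski-integral-inequality-plus-Fatou device that keeps $|b-m_{\wz S_j}(b)|^{1/(1-\az)}|T_\az h_j|^{1/(1-\az)}$ inside a single integral and thereby avoids any fractional analogue of $\wz K_{B,S}^{(\rho)}$ (this is exactly the novelty the paper advertises), the same tail estimate via the vanishing moment, \eqref{4.2}, Lemma \ref{l2.3}/\eqref{2.2}, Corollary \ref{c2.17}, \eqref{3.9} and the geometric factor $6^{-k\dz/(1-\az)}$, and the same $\rbmo$ bookkeeping for $T_{\az,b}^{\rm II}f$. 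The weak endpoint $T_\az:\,L^1(\mu)\to L^{1/(1-\az),\,\fz}(\mu)$, which you propose to re-derive by a Calder\'on--Zygmund decomposition, is precisely Lemma \ref{l4.3}, quoted in the paper from \cite[Theorem 1.13]{fyy14t}, so no new argument is needed there; likewise your use of $(\fz,2,12)_\lz$-atomic blocks instead of the paper's $(\fz,2-\az,12)_\lz$ ones is harmless, because $\wz H^1(\mu)$ is independent of $\gz$ and $\wz K_{B,S}^{(6)}\ge1$ makes the $\gz=2$ normalization at least as strong as \eqref{4.5}.

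Two points in your local estimate need care. First, the exponent relation should read $1-\az=1/q+1/p'$ (the reciprocal of the target exponent), not $1/(1-\az)=1/q+1/p'$; with this correction the norm-level H\"older bound $\|[b-m_{\wz S_j}(b)]T_\az a_{j,i}\|_{L^{1/(1-\az)}(\mu)}\le\|b-m_{\wz S_j}(b)\|_{L^{p'}}\|T_\az a_{j,i}\|_{L^q(\mu)}$ is the same computation the paper performs with $\bz:=q(1-\az)$ and \eqref{4.7}. Second, and more substantively, this H\"older-plus-strong-$(p,q)$ argument closes only on $6B_{j,i}$ (the paper's terms ${\rm V}_j$ and ${\rm W}_j$): on $6S_j\setminus 6B_{j,i}$ it leaves an uncontrolled factor of the form $[\mu(c\,S_j)/\mu(c\,B_{j,i})]^{1/p'}$, which the atom normalization cannot absorb, since powers of $\wz K_{B_{j,i},S_j}^{(6)}$ do not dominate measure ratios. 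There you must argue as in the paper's ${\rm U}_j$ (and as in the corresponding term of Theorem \ref{t3.1}, which you invoke but whose tools you do not list): use the size condition \eqref{4.1} pointwise, namely $|T_\az a_{j,i}(x)|\ls\|a_{j,i}\|_{L^\fz(\mu)}\,\mu(B_{j,i})\,[\lz(x,d(x,c_{B_{j,i}}))]^{-(1-\az)}$ for $x\notin 6B_{j,i}$, and sum over the $N_1$ annuli $6^{k+1}B_{j,i}\setminus 6^kB_{j,i}$, which produces the compensating power of $\wz K_{B_{j,i},S_j}^{(6)}$. With that substitution your outline matches the paper's proof.
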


To prove Theorem \ref{t4.2}, we need the following result
from \cite[Theorem 1.13]{fyy14t}.

\begin{lemma}\label{l4.3}
Let $\az\in(0,1)$ and $T_\az$ be as in \eqref{4.3} with
kernel $K_\az$ satisfying \eqref{4.1} and \eqref{4.2}.
Then the following statements are equivalent:
\begin{itemize}

\item[{\rm (i)}] $T_\az$ is bounded from $L^p(\mu)$ into
$L^q(\mu)$ for all $p\in(1,1/\az)$ and $1/q=1/p-\az$;

\item[{\rm (ii)}] $T_\az$ is bounded from $L^1(\mu)$ into
$L^{1/(1-\az),\fz}(\mu)$.

\end{itemize}

\end{lemma}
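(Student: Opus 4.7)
The plan is to prove both implications separately. I expect the direction (ii) $\Longrightarrow$ (i) to be the easier one: since the kernel conditions \eqref{4.1} and \eqref{4.2} are symmetric in $(x,y)$, the formal adjoint $T_\az^*$ of $T_\az$ is again a generalized fractional integral of order $\az$. Applying (ii) to $T_\az^*$ yields that $T_\az^*:L^1(\mu)\to L^{1/(1-\az),\fz}(\mu)$ is bounded, and then a duality argument (based on $(L^{1/\az,\,1}(\mu))^*=L^{1/(1-\az),\fz}(\mu)$) produces the complementary endpoint $T_\az:L^{1/\az,\,1}(\mu)\to L^\fz(\mu)$. A direct application of the Marcinkiewicz interpolation theorem between these two endpoints then gives strong-type $(p,q)$ boundedness with $1/q=1/p-\az$ for every $p\in(1,1/\az)$.

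The main work lies in the direction (i) $\Longrightarrow$ (ii). Fix $f\in L^1(\mu)$ and $t\in(0,\fz)$, and apply the Calder\'on-Zygmund decomposition from Lemma \ref{l2.18}(i) at the level $s:=t^{1/(1-\az)}\|f\|_{L^1(\mu)}^{-\az/(1-\az)}$ (the unique level that balances the two resulting contributions). This writes $f=g+h$, where the good part $g$ satisfies $|g|\ls s$ off the set $E:=\bigcup_j 6B_j$ with $\mu(E)\ls\|f\|_{L^1(\mu)}/s$, and the bad part $h=\sum_j h_j$ consists of functions supported in the balls $6B_j$ with vanishing $\mu$-integrals. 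Pick an auxiliary $p_0\in(1,1/\az)$ and set $1/q_0:=1/p_0-\az$. For the good part, the $L^{p_0}(\mu)\to L^{q_0}(\mu)$ boundedness from (i), Chebyshev's inequality, and the bound $\|g\|_{L^{p_0}(\mu)}^{p_0}\ls s^{p_0-1}\|f\|_{L^1(\mu)}$ give, after a short computation,
\begin{equation*}
\mu\lf(\{x\in\cx:\,|T_\az g(x)|>t/2\}\r)\ls t^{-q_0}s^{(p_0-1)q_0/p_0}\|f\|_{L^1(\mu)}^{q_0/p_0}=Ct^{-1/(1-\az)}\|f\|_{L^1(\mu)}^{1/(1-\az)},
\end{equation*}
the last equality being the algebraic identity that dictates the choice of $s$. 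The contribution of $E$ is already absorbed by $\mu(E)\ls\|f\|_{L^1(\mu)}^{1/(1-\az)}t^{-1/(1-\az)}$. It remains to control $T_\az h$ on $\cx\setminus E$, which I will do using the vanishing moment of each $h_j$, the regularity condition \eqref{4.2}, and Lemma \ref{l2.3}: an annular decomposition around each $c_{B_j}$, combined with the upper doubling of $\lz$, yields
\begin{equation*}
\sum_j\int_{\cx\setminus 6B_j}\lf|T_\az h_j(x)\r|^{1/(1-\az)}\,d\mu(x)\ls\|f\|_{L^1(\mu)}^{1/(1-\az)},
\end{equation*}
and Chebyshev then closes the estimate.

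The main obstacle I anticipate is precisely this bad-part estimate: the target exponent $\sigma:=1/(1-\az)$ is strictly greater than $1$, so the naive bound $\|T_\az h\|_{L^1(\cx\setminus E)}\ls\|h\|_{L^1(\mu)}$ (available already from \eqref{4.1}) would produce the wrong power of $t$. The only way to recover the correct scaling is to extract from \eqref{4.2} the extra decay $[d(y,c_{B_j})]^\dz/[d(x,c_{B_j})]^\dz$ and to work in $L^\sigma(\cx\setminus E)$ rather than in $L^1$; in the non-homogeneous setting this forces one to exploit the equivalence $\lz(x,d(x,c_{B_j}))\sim\lz(c_{B_j},d(x,c_{B_j}))$ provided by Lemma \ref{l2.3} (through \eqref{2.2}) in order to reduce the resulting annular integrals to a telescoping geometric sum $\sum_k 6^{-k\dz/(1-\az)}<\fz$.
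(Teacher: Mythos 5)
The paper does not prove Lemma \ref{l4.3} at all: it is quoted verbatim from \cite[Theorem 1.13]{fyy14t}, so there is no in-paper argument to compare yours against. Judged on its own merits, your outline is the standard one and is essentially sound: the duality step for (ii) $\Rightarrow$ (i) works because \eqref{4.2} is already symmetric and \eqref{4.1} is symmetric up to constants by \eqref{2.2}, so the adjoint is again a generalized fractional integral, and the two endpoints $L^1(\mu)\to L^{1/(1-\az),\fz}(\mu)$ and $L^{1/\az,1}(\mu)\to L^\fz(\mu)$ interpolate to the full strong-type range; your choice of level $s=t^{1/(1-\az)}\|f\|_{L^1(\mu)}^{-\az/(1-\az)}$ and the exponent bookkeeping for the good part are correct, and the case $\mu(\cx)<\fz$ with $s$ below the admissible threshold is covered by the trivial bound $\mu(\cx)\ls\|f\|_{L^1(\mu)}/s$.

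The one point where your sketch is genuinely imprecise is the description of the bad part. In the Calder\'on--Zygmund decomposition of Lemma \ref{l2.18}, the pieces $h_j=\oz_jf-\vz_j$ are supported in the doubling balls $S_j$ (which may be large dilates $(3\times 6^2)^kB_j$ of $B_j$), not in $6B_j$, and the vanishing moment holds only for the pair $\oz_jf-\vz_j$ taken together. Consequently the cancellation-plus-regularity argument via \eqref{4.2} only controls $\int_{\cx\setminus 2S_j}|T_\az h_j|^{1/(1-\az)}\,d\mu$, and on the intermediate region $2S_j\setminus 6B_j$ there is no geometric decay to exploit: there one must use the size condition \eqref{4.1} alone, which produces a factor comparable to $\wz K^{(6)}_{B_j,S_j}$ rather than a convergent series. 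This factor is bounded only because $S_j$ is chosen to be the \emph{smallest} doubling ball in the family, so that Lemma \ref{l2.9}(iii) applies; the term $T_\az\vz_j$ on that region similarly needs $\|\vz_j\|_{L^\fz(\mu)}\mu(S_j)\ls\|\oz_jf\|_{L^1(\mu)}$ together with the doubling of $S_j$. None of this breaks the proof --- it is exactly how Tolsa's weak $(1,1)$ argument and its fractional analogue in \cite{fyy14t} proceed --- but your claim that the whole bad-part estimate reduces to the telescoping sum $\sum_k 6^{-k\dz/(1-\az)}$ coming from \eqref{4.2} and Lemma \ref{l2.3} is not accurate as stated and would need this additional intermediate-annulus step to be complete.
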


\begin{proof}[Proof of Theorem \ref{t4.2}]
For each fixed $f\in \wz H^1(\mu)$, by Definition \ref{d2.11},
we have a decomposition $f=\sum_{j=1}^\fz h_j$, where, for
any $j\in \nn$, $h_j$ is an $(\infty, 2-\az,12)_\lz$-atomic
block, $\supp h_j\subset S_j$, and
\begin{equation*}
\sum_{j=1}^\fz |h_j|_{\wz H_{\rm{atb},12}^
{1,\infty,2-\az}(\mu)}
\leq 2\|f\|_{\wz H^1 (\mu)}.
\end{equation*}
Moreover, for each fixed $j$, we can further decompose $h_j$
as $h_j=r_{j,1}a_{j,1}+r_{j,2}a_{j,2},$
where, for any $i\in\{1,2\}$, $r_{j,i}\in\cc$,
$a_{j,i}$ is a bounded function supported on some ball
$B_{j,i}\subset S_j$ satisfying
\begin{eqnarray*}
\|a_{j,i}\|_{L^\infty(\mu)}
\le\lf[\mu\lf(12B_{j,i}\r)
\lf\{\wz K_{B_{j,i},S_j}^{(12)}\r\}^{2-\az}\r]^{-1}
\end{eqnarray*}
and $ |h_j|_{\wz H_{\rm{atb},12}^{1,\infty,2-\az}(\mu)}
=|r_{j,1}|+|r_{j,2}|$. By Lemma \ref{l2.10}, we further
conclude that, for any $j$ and any $i\in\{1,2\}$,
\begin{eqnarray}\label{4.5}
\|a_{j,i}\|_{L^\infty(\mu)}
\ls\lf[\mu\lf(12B_{j,i}\r)
\lf\{\wz K_{B_{j,i},S_j}^{(6)}\r\}^{2-\az}\r]^{-1}.
\end{eqnarray}
Write
$$
T_{\az,b}f=\sum_{j=1}^\fz
\lf[b-m_{\wz{S_j}}(b)\r]T_\az h_j
+T_\az\lf(\sum_{j=1}^\fz \lf[m_{\wz{S_j}}(b)-
b(\cdot)\r]h_j\r)=:T_{\az,b}^{\rm I}f+T_{\az,b}^
{\rm{II}}f.
$$

By Lemma \ref{l4.3} and an argument completely analogous to that
used in the estimate for \eqref{3.8},
we conclude that
\begin{eqnarray}\label{4.6}
\lf[\mu\lf(\{x\in\cx:\,|T_{\az,b}^{\rm{II}}f(x)|>t\}\r)\r]^
{1-\az}\ls\|b\|_{\rbmo}t^{-1}\|f\|_{\wz H^1(\mu)}.
\end{eqnarray}

We now estimate $T_{\az,b}^{\rm I}f(x)$.
By the Minkowski integral inequality and the Fatou lemma,
we see that
\begin{eqnarray*}
&&\lf[\mu(\lf\{x\in{\cx}:\,|T_{\az,b}^{\rm I} f(x)|>t\r\})\r]^
{1-\az}\\
&&\hs\leq\lf\{\int_{\lf\{x\in\cx:\,|T_{\az,b}^{\rm I} f(x)|>t\r\}}
\frac{|\sum_{j=1}^\fz
[b(x)-m_{\wz S_j}(b)]T_\az h_j(x)|^{\frac1 {1-\az}}}
{t^{\frac1 {1-\az}}}\,d\mu(x)\r\}^{1-\az}\\
&&\hs\ls t^{-1}\sum_{j=1}^\fz
\lf[\int_{\lf\{x\in\cx:\,\lf|T_{\az,b}^{\rm I}f(x)\r|>t\r\}}
\lf|b(x)-m_{\wz S_j}(b)\r|^{\frac1 {1-\az}}
\lf|T_\az h_j(x)\r|^{\frac1 {1-\az}}\,d\mu(x)\r]^{1-\az}\\
&&\hs\ls t^{-1}\sum_{j=1}^\fz
\lf[\int_{6S_j} \lf|b(x)-m_{\wz S_j}(b)\r|^{\frac1 {1-\az}}
\lf|T_\az h_j(x)\r|^{\frac1 {1-\az}}\,d\mu(x)\r]^{1-\az}
+t^{-1}\sum_{j=1}^\fz \lf[\int_{\cx\backslash{6S_j}}
\cdots\r]^{1-\az}\\
&&\hs=:\rm G+\rm H.
\end{eqnarray*}
To estimate $\rm G$, for each fixed $j$, write
\begin{eqnarray*}
&&\int_{6S_j}\lf|b(x)-m_{\wz{S_j}}(b)\r|^{\frac1 {1-\az}}
|T_\az h_j(x)|^{\frac1 {1-\az}}\,d\mu(x)\\
&&\hs\ls|r_{j,1}|^{\frac1 {1-\az}}
\int_{6S_j} \lf|b(x)-m_{\wz{S_j}}(b)\r|^{\frac1 {1-\az}}
\lf|T_\az a_{j,1}(x)\r|^{\frac1 {1-\az}}\,d\mu(x)\\
&&\hs\hs +|r_{j,2}|^{\frac1 {1-\az}}
\int_{6S_j} \lf|b(x)-m_{\wz{S_j}}(b)\r|^{\frac1 {1-\az}}
\lf|T_\az a_{j,2}(x)\r|^{\frac1 {1-\az}}\,d\mu(x)
=:{\rm L}_{j,1}+{\rm L}_{j,2}.
\end{eqnarray*}
We only consider the term ${\rm L}_{j,1}$, the other term
${\rm L}_{j,2}$ can be estimated in a similar way, the
details being omitted. Write
\begin{eqnarray*}
{\rm L}_{j,1}&&\ls|r_{j,1}|^{\frac1 {1-\az}}
\int_{6S_j\backslash{6B_{j,1}}}
{\lf|b(x)-m_{\wz{S_j}}(b)\r|}^{\frac1 {1-\az}}
{\lf|T_\az a_{j,1}(x)\r|}^{\frac1 {1-\az}}\,d\mu(x)\\
&&\quad+|r_{j,1}|^{\frac1 {1-\az}}
\int_{6B_{j,1}}{\lf|b(x)-m_{\wz{6B_{j,1}}}(b)\r|}^
{\frac1 {1-\az}}
{\lf|T_\az a_{j,1}(x)\r|}^{\frac1 {1-\az}}\,d\mu(x)\\
&&\quad+|r_{j,1}|^{\frac1 {1-\az}}
{\lf|m_{\wz{6B_{j,1}}}(b)-m_{\wz{S_j}}(b)\r|}^{\frac1 {1-\az}}
\int_{6B_{j,1}}{\lf|T_\az a_{j,1}(x)\r|}^{\frac1 {1-\az}}\,d\mu(x)
=:{\rm U}_j+{\rm V}_j+{\rm W}_j.
\end{eqnarray*}
Let $p\in(1,1/\az)$ and $1/q=1/p-\az$. Let $\bz:=q(1-\az)$.
Then $\bz\in(1,\fz)$. Recall that $\frac1\bz+\frac1{\bz'}=1$.
It then follows, from the H\"older inequality, the assumption
that $T_\az$ is bounded from $L^p(\mu)$ into $L^q(\mu)$,
\eqref{2.8}, Lemma \ref{l2.9} and \eqref{4.5}, that
\begin{eqnarray*}
{\rm W}_j&&\ls\|b\|_{\rbmo}^{\frac1{1-\az}}|r_{j,1}|^
{\frac1{1-\az}}
\lf[\wz K_{\wz{6B_{j,1}},\wz{S_j}}^{(6)}\r]^{\frac1 {1-\az}}
\lf[\int_{6B_{j,1}}{\lf|T_\az a_{j,1}(x)\r|}^{\frac1
{1-\az}\bz}\,d\mu(x)\r]^{\frac1 \bz}
\lf[\mu(6B_{j,1})\r]^{\frac1 {{\bz}'}}\\
&&\ls\|b\|_{\rbmo}^{\frac1{1-\az}}|r_{j,1}|^{\frac1{1-\az}}
\lf[\wz K_{B_{j,1},S_j}^{(6)}\r]^{\frac1 {1-\az}}
\|T_\az a_{j,1}\|_{L^q(\mu)}^{\frac q \bz}
\lf[\mu(6B_{j,1})\r]^{\frac1 {{\bz}'}}\\
&&\ls\|b\|_{\rbmo}^{\frac1{1-\az}}|r_{j,1}|^{\frac1{1-\az}}
\lf[\wz K_{B_{j,1},S_j}^{(6)}\r]^{\frac1 {1-\az}}
\|a_{j,1}\|_{L^p(\mu)}^{\frac1{1-\az}}
\lf[\mu(6B_{j,1})\r]^{\frac1 {{\bz}'}}\\
&&\ls\|b\|_{\rbmo}^{\frac1 {1-\az}}|r_{j,1}|^{\frac1 {1-\az}}
\lf[\wz K_{B_{j,1},S_j}^{(6)}\r]^{\frac1 {1-\az}}
\lf[\mu(6B_{j,1})\r]^{\frac1 {1-\az}}
\|a_{j,1}\|_{L^\fz(\mu)}^{\frac1 {1-\az}}
\ls\|b\|_{\rbmo}^{\frac1 {1-\az}}|r_{j,1}|^{\frac1 {1-\az}},
\end{eqnarray*}
where, in the penultimate inequality, we used the fact that
\begin{equation}\label{4.7}
\frac1{p(1-\az)}+\frac1{\bz'}=\frac1{1-\az}.
\end{equation}
On the other hand, the H\"older inequality, together with
Corollary \ref{c2.17}, the boundedness from $L^p(\mu)$ into $L^q(\mu)$ of $T_\az$, \eqref{4.7} and \eqref{4.5},
implies that
\begin{eqnarray*}
{\rm V}_j&&\ls\lf|r_{j,1}\r|^{\frac1 {1-\az}}
\lf[\int_{6B_{j,1}}
\lf|b(x)-m_{\wz{6B_{j,1}}}(b)\r|^{\frac1
{1-\az}\bz'}\,d\mu(x)\r]^{\frac1 {{\bz}'}}
\|T_\az a_{j,1}\|_{L^q(\mu)}^{\frac q \bz}\\
&&\ls\lf|r_{j,1}\r|^{\frac1 {1-\az}}
\|b\|_{\rbmo}^{\frac1 {1-\az}}
\lf[\mu(12B_{j,1})\r]^{\frac1 {\bz'}}
\|T_\az a_{j,1}\|_{L^q(\mu)}^{\frac1{1-\az}}\\
&&\ls\lf|r_{j,1}\r|^{\frac1 {1-\az}}
\|b\|_{\rbmo}^{\frac1 {1-\az}}
\|a_{j,1}\|_{L^p(\mu)}^{\frac1{1-\az}}
\lf[\mu(12B_{j,1})\r]^{\frac1 {\bz'}}\\
&&\ls\|b\|_{\rbmo}^{\frac1 {1-\az}}
|r_{j,1}|^{\frac1 {1-\az}}
\lf[\mu(12B_{j,1})\r]^{\frac1 {1-\az}}
\|a_{j,1}\|_{L^\fz(\mu)}^{\frac1 {1-\az}}
\ls\|b\|_{\rbmo}^{\frac1 {1-\az}}|r_{j,1}|^{\frac1 {1-\az}}.
\end{eqnarray*}
To estimate ${\rm U}_j$, we first observe that, for any
$x\notin 6B_{j,1}$ and $y\in B_{j,1}$,
$d(x,y)\sim d(x,c_{B_{j,1}})$. It then follows
from \eqref{4.1} that, for any $x\notin 6B_{j,1}$,
\begin{eqnarray*}
|T_\az a_{j,1}(x)|&&=\lf|\int_{\cx}
K_\az(x,y)a_{j,1}(y)\,d\mu(y)\r|
\le\int_{B_{j,1}} |K_\az(x,y)||a_{j,1}(y)|\,d\mu(y)\\
&&\ls{\frac{\|a_j\|_{L^1(\mu)}}
{[\lz(x,d(x,c_{B_{j,1}}))]^{1-\az}}}
\ls{\frac{\|a_j\|_{L^\fz(\mu)}\mu(B_{j,1})}
 {[\lz(x,d(x,c_{B_{j,1}}))]^{1-\az}}}.
\end{eqnarray*}
Let $N_1:=N_{6B_{j,1},6S_j}^{(6)}
+\lfloor \log_{6}2\rfloor+1$.
A straightforward computation, via the above estimate,
Corollary \ref{c2.17}, \eqref{2.8}, Lemma \ref{l2.9},
\eqref{2.2} and \eqref{4.5}, shows that
\begin{eqnarray*}
{\rm U}_j&&\ls|r_{j,1}|^{\frac1 {1-\az}}
\|a_{j,1}\|_{L^\fz(\mu)}^{\frac1 {1-\az}}
\lf[\mu(B_{j,1})\r]^{\frac1 {1-\az}}
\int_{6S_j\backslash{6B_{j,1}}}
\frac{|b(x)-m_{\wz{S_j}}(b)|^{\frac1 {1-\az}}}
{\lz(x,d(x,c_{B_{j,1}}))}\,d\mu(x)\\
&&\ls|r_{j,1}|^{\frac1 {1-\az}}
\|a_{j,1}\|_{L^\fz(\mu)}^{\frac1 {1-\az}}
\lf[\mu(B_{j,1})\r]^{\frac1 {1-\az}}
\sum_{k=1}^{N_1}
\lf[\int_{6^{k+1}B_{j,1}\backslash{6^kB_{j,1}}}
\frac{|b(x)-m_{\wz{6^{k+1}B_{j,1}}}(b)|^{\frac1 {1-\az}}}
{\lz(x,d(x,c_{B_{j,1}}))}\,d\mu(x)\r.\\
&&\lf.\quad+\lf|m_{\wz{6^{k+1}B_{j,1}}}(b)
-m_{\wz{S_j}}(b)\r|^{\frac1 {1-\az}}
\int_{6^{k+1}B_{j,1}\backslash{6^kB_{j,1}}}
\frac1 {\lz(x,d(x,c_{B_{j,1}}))}\,d\mu(x)\r]\\
&&\ls\|b\|_{\rbmo}^{\frac1 {1-\az}}|r_{j,1}|^{\frac1 {1-\az}}
\|a_{j,1}\|_{L^\fz(\mu)}^{\frac1 {1-\az}}
\lf[\mu(B_{j,1})\r]^{\frac1 {1-\az}}\\
&&\hs\times\sum_{k=1}^{N_1} \lf[\frac{\mu(6^{k+2}B_{j,1})}
{\lz(c_{B_{j,1}},6^{k+2}r_{B_{j,1}})}
+\lf\{\wz K_{B_{j,1},S_j}^{(6)}\r\}^{\frac1 {1-\az}}
\frac{\mu(6^{k+1}B_{j,1})}
{\lz(c_{B_{j,1}},6^{k+1}r_{B_{j,1}})}\r]\\
&&\ls\|b\|_{\rbmo}^{\frac1 {1-\az}}|r_{j,1}|^{\frac1 {1-\az}}
\|a_{j,1}\|_{L^\fz(\mu)}^{\frac1 {1-\az}}
\lf[\mu(B_{j,1})\r]^{\frac1 {1-\az}}
\lf[\wz K_{B_{j,1},S_j}^{(6)}\r]^{\frac1 {1-\az}}\\
&&\hs\times\lf[1+\sum_{k=1}^{N_1}
\frac{\mu(6^{k}B_{j,1})} {\lz(c_{B_{j,1}},6^{k}r_{B_{j,1}})}\r]\\
&&\ls\|b\|_{\rbmo}^{\frac1 {1-\az}}|r_{j,1}|^{\frac1 {1-\az}}
\|a_{j,1}\|_{L^\fz(\mu)}^{\frac1 {1-\az}}
\lf[\mu(B_{j,1})\r]^{\frac1 {1-\az}}
\lf[\wz K_{B_{j,1},S_j}^{(6)}\r]^{\frac{2-\az}{1-\az}}
\ls\|b\|_{\rbmo}^{\frac1 {1-\az}}|r_{j,1}|^{\frac1 {1-\az}}.
\end{eqnarray*}
Combining the estimates for ${\rm U}_j$, ${\rm V}_j$ and
${\rm W}_j$, we obtain $${\rm L}_{j,1}\ls\|b\|_{\rbmo}^
{\frac1 {1-\az}}|r_{j,1}|^{\frac1 {1-\az}},$$
which further implies that
$${\rm G}\ls t^{-1}\|b\|_{\rbmo}\sum_{j=1}^\fz
|h_j|_{\wz H_{\rm{atb},12}^{1,\infty,2-\az}(\mu)}
\ls t^{-1}\|b\|_{\rbmo}\|f\|_{\wz H^1(\mu)}.$$

It remains to estimate $\rm H$. Observe that, for any
$x\notin 6S_j$ and $y\in S_j$, $d(x,y)\sim d(x,c_{S_j})$.
From this, together with the vanishing moment of $h_j$
and \eqref{4.2}, we deduce that, for any $x\notin 6S_j$,
\begin{eqnarray*}
|T_\az h_j(x)|&&=\lf|\int_{\cx} K_\az(x,y)h_j(y)\,d\mu(y)\r|
\leq\int_{S_j} |K_\az(x,y)-K_\az(x,c_{S_j})||h_j(y)|\,d\mu(y)\\
&&\ls\int_{S_j} {\frac{[d(y,c_{S_j})]^\delta}
{[d(x, y)]^\delta[\lz(x,d(x, y))]^{1-\az}}}|h_j(y)|\,d\mu(y)\\
&&\ls{\frac{(r_{S_j})^\delta}  {[d(x,c_{S_j})]^\delta
[\lz(x,d(c_{S_j},x))]^{1-\az}}}\|h_j\|_{L^1(\mu)}.
\end{eqnarray*}
On the other hand, a trivial computation, via Corollary
\ref{c2.17}, \eqref{3.9} and \eqref{2.1}, gives us that,
for any $i\in\nn$,
\begin{eqnarray*}
&&{\frac1 {\lz(c_{S_j},6^ir_{S_j})}} \int_{6^{i+1}S_j}
\lf|b(x)-m_{\wz{S_j}}(b)\r|^{\frac1 {1-\az}}\,d\mu(x)\\
&&\hs\leq {\frac1 {\lz(c_{S_j},6^ir_{S_j})}}\lf[\int_{6^{i+1}S_j}  \lf|b(x)-m_{\wz{6^{i+1}S_j}}(b)\r|^{\frac1 {1-\az}}\,d\mu(x)\r.\\
&&\hs\hs\lf.+\mu(6^{i+1}S_j)
\lf|m_{\wz{6^{i+1}S_j}}(b)-m_{\wz{S_j}}(b)\r|^{\frac1 {1-\az}}\r]\\
&&\hs\ls\frac{\mu(2\cdot6^{i+1}S_j)} {\lz(c_{S_j},6^ir_{S_j})}
\|b\|_{\rbmo}^{\frac1 {1-\az}}
+\frac{\mu(6^{i+1}S_j)} {\lz(c_{S_j},6^ir_{S_j})}
\lf[i\|b\|_{\rbmo}\r]^{\frac1 {1-\az}}
\ls\lf[i\|b\|_{\rbmo}\r]^{\frac1 {1-\az}}.
\end{eqnarray*}
By the above two estimates, we conclude that
\begin{eqnarray*}
{\rm H}&&=t^{-1}\sum_{j=1}^\fz
\lf[\int_{\cx\backslash{6S_j}}
\lf|b(x)-m_{\wz{S_j}}(b)\r|^{\frac1 {1-\az}}
\lf|\int_{S_j} K_\az(x,y)h_j(y)\,d\mu(y)\r|^
{\frac1 {1-\az}}\,d\mu(x)\r]^{1-\az}\\
&&\ls t^{-1}\sum_{j=1}^\fz \lf\{\sum_{i=1}^\fz
\int_{6^{i+1}S_j\backslash{6^iS_j}}
|b(x)-m_{\wz{S_j}}(b)|^{\frac1 {1-\az}}\r.\\
&&\hs\lf.\times\lf[{\frac{(r_{S_j})^\delta}
{(d(x,c_{S_j}))^\delta(\lz(x,d(c_{S_j},x)))^{1-\az}}}
\|h_j\|_{L^1(\mu)}\r]^{\frac1 {1-\az}}\,d\mu(x)\r\}^{1-\az}\\
&&\ls t^{-1}\sum_{j=1}^\fz \|h_j\|_{L^1(\mu)}
\lf[\sum_{i=1}^\fz ({6^i})^{{\frac{-\delta} {1-\az}}}
{\frac1 {\lz(c_{S_j},6^ir_{S_j})}}
\int_{6^{i+1}S_j} |b(x)-m_{\wz{S_j}}(b)|^
{\frac1 {1-\az}}\,d\mu(x) \r]^{1-\az}\\
&&\ls t^{-1}\|b\|_{\rbmo}\sum_{j=1}^\fz \|h_j\|_{L^1(\mu)}
\lf[\sum_{i=1}^\fz \lf(6^{\frac{-i\delta} {1-\az}}\r)
(i)^{\frac1 {1-\az}} \r]^{1-\az}
\ls t^{-1}\|b\|_{\rbmo}\|f\|_{\wz H^1(\mu)}.
\end{eqnarray*}

We finally obtain
\begin{eqnarray*}
\mu\lf(\{x\in\cx:\,|T_b^{\rm I}f(x)|>t\}\r)^{1-\az}
\ls t^{-1}\|b\|_{\rbmo}\|f\|_{\wz H^1(\mu)},
\end{eqnarray*}
which, together with the estimate $\eqref{4.6}$,
completes the proof of Theorem \ref{t4.2}.
\end{proof}
\section{Boundedness of Commutators $T_b$ on $L^p(\mu)$
with $p\in(1,\fz)$}\label{s5}
\hskip\parindent
In this section, we establish the boundedness on $L^p(\mu)$,
for all $p\in(1,\fz)$, of the commutator $T_b$
generated by $b\in\rbmo$ and the Caler\'{o}n-Zygmund operator
$T$ with kernel satisfying \eqref{3.1}
and \eqref{3.2}, which improves \cite[Theorem 3.10]{fyy12}.

\begin{theorem}\label{t5.1}
Let $b\in\rm \rbmo$. Assume that the Calder\'on-Zygmund
operator $T$, defined by \eqref{3.3} associated with
kernel $K$ satisfying $\eqref{3.1}$ and $\eqref{3.2}$,
is bounded on $L^2(\mu)$. Then the commutator $T_b$ defined
in \eqref{3.4} is bounded on $L^p(\mu)$ with $p\in{(1,\fz)}$.
\end{theorem}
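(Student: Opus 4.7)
The plan is to combine the interpolation theorem for sublinear operators (Theorem \ref{t5.5}) with a pointwise sharp maximal function estimate for $T_b$. Theorem \ref{t5.5} is designed so that, for a sublinear operator whose weak endpoint boundedness from $\wz H^1(\mu)$ into $L^{1,\fz}(\mu)$ is already known, the $L^p(\mu)$ boundedness with $p\in(1,\fz)$ follows from a pointwise control of $M_r^\sharp\circ T_b$ by maximal operators known to be bounded on $L^p(\mu)$. The weak endpoint is precisely Theorem \ref{t3.1}, so the task reduces to producing such a pointwise estimate.

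First I would establish a pointwise bound of the schematic form
$$M_r^\sharp(T_bf)(x)\ls \|b\|_{\rbmo}\lf[M_{r,\,(9/8)}(Tf)(x)+M_{L\log L,\,(9/8)}f(x)\r]$$
for any $r\in(1,\fz)$, where the right-hand side involves a standard non-centered maximal operator applied to $Tf$ together with an $L\log L$-type Orlicz maximal operator applied to $f$. The proof proceeds along the familiar route: for a fixed ball $B\ni x$ and the doubling ancestor $\wz B^{\,6}$, split $f=f_1+f_2$ with $f_1:=f\chi_{(6/5)B}$ and decompose
$$T_bf=\lf[b-m_{\wz B^{\,6}}(b)\r]Tf-T\lf(\lf[b-m_{\wz B^{\,6}}(b)\r]f_1\r)-T\lf(\lf[b-m_{\wz B^{\,6}}(b)\r]f_2\r).$$
The first piece is handled by Corollary \ref{c2.17} and the John-Nirenberg inequality Proposition \ref{p2.16}. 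The second piece is controlled via the $L^p(\mu)$ boundedness of $T$ (Lemma \ref{l3.2}), a Kolmogorov-type inequality, and the generalized H\"older inequality Lemma \ref{l3.3}. The third, far-field piece is attacked via the H\"ormander-type condition \eqref{3.2}: an annular decomposition $\{6^{k+1}B\setminus 6^kB\}_k$ is combined with Lemma \ref{l2.3} to interchange $\lz(y,d(x,y))$ and $\lz(c_B,d(c_B,y))$, with Lemma \ref{l2.9} to sum the discrete coefficients, and with Lemma \ref{l2.15} to control $|m_{\wz{6^kB}^{\,6}}(b)-m_{\wz B^{\,6}}(b)|$ by $k\|b\|_{\rbmo}$.

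One also has to control the oscillation $|m_B(T_bf)-h_B|$ by the same right-hand side for a suitable constant $h_B$, which uses the same ingredients but now the constants compared are of the form $m_B(T(\cdots))$; Lemma \ref{l2.9}(v) ensures that telescoping along chains of doubling ancestors is uniformly bounded by the discrete coefficients. With the pointwise estimate in hand, Theorem \ref{t5.5} applies: the weak boundedness provided by Theorem \ref{t3.1}, together with the $L^p(\mu)$ boundedness of $T$ (Lemma \ref{l3.2}(ii)) and of the non-centered and Orlicz maximal operators on $(\cx,d,\mu)$, upgrades the sharp maximal control to the desired $L^p(\mu)$ bound on $T_b$.

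The main obstacle I anticipate is the far-field term involving $f_2$. Carrying the factor $b-m_{\wz B^{\,6}}(b)$ across the annuli $\{6^{k+1}B\setminus 6^kB\}_k$ forces one to pay simultaneously a factor of order $k\|b\|_{\rbmo}$ (from the comparison of $\rm{\widetilde{RBMO}}$ means along the doubling chain) and a geometric weight coming from the H\"ormander cancellation; to convert these into an $L\log L$ Orlicz average one must apply the John-Nirenberg inequality on each annulus and choose the normalization constants very carefully. This is exactly the place where the discrete coefficient $\wz K_{B,S}^{(\rho)}$ (rather than its continuous analogue $K_{B,S}$ used in \cite{fyy12}) affords just enough flexibility via Lemma \ref{l2.9}(iv) and (v) to close the argument, thereby improving \cite[Theorem 3.10]{fyy12} by replacing the stronger regularity condition \eqref{3.10} with the weaker H\"ormander-type condition \eqref{3.2}.
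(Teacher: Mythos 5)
Your overall strategy (Theorem \ref{t3.1} $+$ a sharp maximal estimate $+$ Theorem \ref{t5.5}) is the same as the paper's, but the pointwise estimate you propose is not obtainable under the hypotheses of the theorem, and this is a genuine gap. You want
$M_r^\sharp(T_bf)\ls\|b\|_{\rbmo}[M_{r,(9/8)}(Tf)+M_{L\log L,(9/8)}f]$
for general $f$, which is the classical P\'erez-type bound; its proof of the far-field piece requires a \emph{pointwise} decay of the kernel difference on each annulus $6^{k+1}B\setminus 6^kB$ (as in the stronger condition \eqref{3.10}), so that the generalized H\"older inequality can pair $b-m_{\wz B}(b)$ (in $\exp L$) against $f$ (in $L\log L$) with the kernel factor pulled out in $L^\fz$. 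Under the integral H\"ormander-type condition \eqref{3.2} alone, the kernel difference is controlled only in an $L^1$-average sense over each annulus, and the integrand in the far-field term is a product of \emph{three} factors (kernel difference, $b-m_{\wz B}(b)$, $f$); no exp\,$L$/$L\log L$ duality can absorb both $b$ and $f$ while the third factor is merely integrable, so "applying John--Nirenberg on each annulus and choosing the normalization carefully" does not close the argument. This is precisely why the paper's Theorem \ref{t5.6} proves the sharp maximal estimate only for $b\in L^\fz(\mu)$ and $f\in L^\fz_b(\mu)$, with right-hand side $\|b\|_{\rbmo}[\|f\|_{L^\fz(\mu)}+M_{s,(5)}f+M_{s,(6)}Tf+T_*f]$: the factor $\|f\|_{L^\fz(\mu)}$ is pulled out of the far-field term so that exp\,$L$/$L\log L$ duality is applied to $b$ versus the kernel difference (as in the estimate of ${\rm H}_1$ in Theorem \ref{t3.1}), and Theorem \ref{t5.5}(iii) is formulated with the extra term $D\|f\|_{L^\fz(\mu)}$ exactly to accept such an estimate.

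Two further points you omit or get wrong. First, your right-hand side has no $T_*f$: in this non-doubling setting the regularization step (comparing the constants $h_B$ and $h_S$ along doubling ancestors, term ${\rm M}_2$ in the paper) produces $T(f\chi_{\cx\setminus 6B})$ evaluated on $B$, which is controlled by $T_*f+M_{s,(5)}f$, not by a maximal function of $Tf$; the needed $L^p(\mu)$-boundedness of $T_*$ under \eqref{3.1}--\eqref{3.2} is a separate input from \cite{lmy14}. Also, the parameter $9/8$ is unsupported here: Lemma \ref{l5.4} gives boundedness of $M_{r,(\eta)}$ only for $\eta\in[5,\fz)$. Second, to apply Theorem \ref{t5.5} one must verify hypothesis (ii) when $\mu(\cx)<\fz$ (the paper does this via \eqref{5.12}), and since the sharp maximal estimate is proved for bounded $b$, one must remove this restriction by the truncation $b_q$ with $\|b_q\|_{\rbmo}\ls\|b\|_{\rbmo}$ and a limit argument; neither step appears in your plan.
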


To prove Theorem \ref{t5.1}, we borrow some ideas from the
proof of \cite[Theorem 4.3]{hmy05}. We need several tools,
including an  interpolation theorem for sublinear operators,
which is an extension of \cite[Theorem 3.1]{hmy05} and whose
proof is different from that of \cite[Theorem 3.1]{hmy05}.
We start with the notion of maximal functions in
\cite{ad13, h10}.

For any $f\in L^1_{loc}(\mu)$, the \emph{sharp maximal
function $M^\sharp f$} is defined by setting, for all
$x\in\cx$,
$$M^\sharp f(x):=\sup_{B\ni x}\frac1 {\mu(6B)}
\int_B |f(y)-m_{\wz{B}}(f)|\,d\mu(y)+
\sup_{\begin{subarray}{c}x\in B\subset S\\
B,\,S\ (6,\bz_6)-{\rm doubling}\end{subarray}}
\frac{|m_B(f)-m_S(f)|} {\wz K_{B,S}^{(6)}},$$
where the first supremum is taking over all balls $B$
containing $x$. By an argument similar to that used in
the proof of \cite[Lemma 3.1]{hmy05}, we have the
following result.
\begin{lemma}\label{l5.2}
For $r\in(0,1)$ and any $|f|^r\in L^1_{loc}(\mu)$, let
$M^\sharp_r f:=\lf[M^\sharp\lf(|f|^r\r)\r]^{\frac1 r}$.
Then there exists a positive constant $C_{(r)}$,
depending on $r$, such that, for all
$|f|^r\in L^1_{loc}(\mu)$,
$M^\sharp_r f\leq C_{(r)}M^\sharp f$.
\end{lemma}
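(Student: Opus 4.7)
The plan is to prove the equivalent pointwise inequality $M^\sharp(|f|^r)(x)\le C_{(r)}^r[M^\sharp f(x)]^r$, from which the claim follows upon raising to the power $1/r$. The two analytic inputs are the $r$-Hölder estimate
$$\bigl||a|^{r}-|b|^{r}\bigr|\le |a-b|^{r}\qquad(a,\,b\in\rr,\ r\in(0,1)),$$
and Jensen's inequality for the concave function $t\mapsto t^{r}$ on $[0,\fz)$, giving $m_{E}(g^{r})\le[m_{E}(g)]^{r}$ for nonnegative $g$ and any $E$ of positive finite measure. The two suprema defining $M^\sharp(|f|^r)(x)$ will be handled by the same recipe: introduce a pivot constant, reduce to $L^1$ oscillations via the Hölder estimate, then pull out the $r$-th power via Jensen.

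For the first supremum, I fix a ball $B\ni x$ and choose the pivot $c:=|m_{\wz B}(f)|^{r}$. The triangle inequality gives
$$\bigl||f(y)|^{r}-m_{\wz B}(|f|^{r})\bigr|\le \bigl||f(y)|^{r}-c\bigr|+\bigl|c-m_{\wz B}(|f|^{r})\bigr|,$$
and the second term is controlled by $\frac{1}{\mu(\wz B)}\int_{\wz B}||f(z)|^{r}-c|\,d\mu(z)$. On each average, the $r$-Hölder estimate replaces $||f|^{r}-c|$ by $|f-m_{\wz B}(f)|^{r}$, and then Jensen with the probability measure $\chi_{B}\,d\mu/\mu(B)$ (respectively $\chi_{\wz B}\,d\mu/\mu(\wz B)$) extracts the $r$-th power. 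A harmless factor $[\mu(B)/\mu(6B)]^{1-r}\le 1$ and the doubling property $\mu(6\wz B)\le \beta_{6}\mu(\wz B)$ (together with the identity $\wz{\wz B}=\wz B$, since $\wz B$ is already $(6,\beta_{6})$-doubling) recast both resulting $L^1$ averages as oscillations indexed by balls $B\ni x$ and $\wz B\ni x$ that appear in the first supremum of $M^\sharp f$. This yields the desired bound by $C[M^\sharp f(x)]^r$.

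For the second supremum, I fix $(6,\beta_{6})$-doubling balls $B\subset S$ with $B\ni x$ and set $c:=|m_{B}(f)|^{r}$; note $\wz B=B$ and $m_{\wz B}(f)=m_{B}(f)$ by the doubling assumption. Splitting via
$$\bigl|m_{B}(|f|^{r})-m_{S}(|f|^{r})\bigr|\le \bigl|m_{B}(|f|^{r})-c\bigr|+\bigl|c-m_{S}(|f|^{r})\bigr|,$$
the same Hölder-then-Jensen mechanism bounds the first term by $[m_{B}(|f-m_{B}(f)|)]^{r}\lesssim[M^\sharp f(x)]^r$ (using $\mu(6B)\le\beta_6\mu(B)$ and $\wz B=B$). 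The second term is $\le[m_{S}(|f-m_{B}(f)|)]^{r}$, and the inner average splits as $m_{S}(|f-m_{S}(f)|)+|m_{S}(f)-m_{B}(f)|$; the first piece is $\lesssim M^\sharp f(x)$ since $S$ is doubling and $S\ni x$, while the second piece is $\le\wz K_{B,S}^{(6)}M^\sharp f(x)$ directly from the second supremum in $M^\sharp f$. Dividing by $\wz K_{B,S}^{(6)}$ and using $\wz K_{B,S}^{(6)}\ge 1$ with $r-1<0$, the extra factor $[\wz K_{B,S}^{(6)}]^{r-1}\le 1$ is absorbed, completing the argument.

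The main obstacle is not any single hard inequality but the bookkeeping imposed by the non-standard normalization: integration over $B$ against $\mu(6B)$ rules out a direct application of Jensen with a probability measure, so one pays (and absorbs) a factor $[\mu(B)/\mu(6B)]^{1-r}\le 1$; similarly, the discrepancy between the pivot $m_{\wz B}(f)$ and $m_{B}(f)$ is resolved only by using repeatedly that $\wz{\wz B}=\wz B$ for the regularized doubling ball. The resulting constant $C_{(r)}$ depends only on $r$ and on $\beta_{6}$ from \eqref{2.3}.
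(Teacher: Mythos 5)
Your argument is correct and is essentially the standard proof the paper has in mind: the paper omits the details, citing the analogous Lemma 3.1 of Hu--Meng--Yang, which rests on exactly the same two ingredients you use, namely $||a|^r-|b|^r|\le|a-b|^r$ and Jensen (H\"older) for $t\mapsto t^r$, applied to each of the two suprema with pivots $|m_{\wz B}(f)|^r$, $|m_B(f)|^r$ and the facts $\wz{\wz B}=\wz B$, $\mu(6\wz B)\le\bz_6\mu(\wz B)$ and $\wz K_{B,S}^{(6)}\ge1$. No gaps; the constant depends only on $r$ and $\bz_6$, as you note.
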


The \emph{non-centered doubling maximal operator} $N$
is defined by setting, for all $f\in L^1_\loc(\mu)$
and $x\in\cx$,
$$ Nf(x):=\sup_{\begin{subarray}{c}
B\ni x\\
B\,(6,\bz_6)-{\rm doubling}\end{subarray}}
\frac1 {\mu(B)}\int_B |f(y)|\,d\mu(y).$$
By the Lebesgue differential theorem, it is easy to
see that, for any $f\in L^1_{loc}(\mu)$ and
$\mu$-almost every $x\in\cx$,
\begin{eqnarray}\label{5.1}
|f(x)|\leq Nf(x)
\end{eqnarray}
(see \cite[Corollary 3.6]{h10}). The following lemma
is just \cite[Lemma 3.3]{lyy12}.

\begin{lemma}\label{l5.3}
For all $f\in L_{loc}^1(\mu)$, with
\begin{equation*}
\int_\cx f(y)\,d\mu(y)=0\quad{\rm when}\,\,\mu(\cx)<\fz,
\end{equation*}
if $\min\{1,Nf\}\in L^{p_0}(\mu)$ for some $p_0\in(1,\fz)$,
then, for all $p\in[p_0,\fz)$, there exists a positive
constant $C_{(p)}$, depending on $p$ but independent of
$f$, such that
\begin{equation*}
\lf\|Nf\r\|_{L^{p,\fz}(\mu)}
\leq C_{(p)}\lf\|M^\sharp f\r\|_{L^{p,\fz}(\mu)}.
\end{equation*}
\end{lemma}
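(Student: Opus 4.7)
The plan is to derive the weak-type norm estimate from a good-$\lambda$ inequality of the form
\begin{equation}\label{glin}
\mu(\{x\in\cx:\,Nf(x)>(1+\eta)t,\ M^\sharp f(x)\le\gamma t\})\le C(\gamma,\eta)\mu(\{x\in\cx:\,Nf(x)>t\}),
\end{equation}
valid for all $t\in(0,\fz)$, with $C(\gamma,\eta)\to 0$ as $\gamma\to 0$ for fixed $\eta$. Once \eqref{glin} is in place, multiplying by $t^p$, taking the supremum over $t$, and absorbing the $Nf$-term on the right-hand side into the left yields the desired weak-type bound. The absorption step is exactly where the hypothesis $\min\{1,Nf\}\in L^{p_0}(\mu)$ is essential: it guarantees, through a truncation of $f$ at various levels together with $p\ge p_0$, the a priori finiteness of $\|Nf\|_{L^{p,\fz}(\mu)}$ for the truncated objects, after which one passes to the limit by monotone convergence.

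To prove \eqref{glin}, I would fix $t>0$ and cover the level set $\Omega_t:=\{Nf>t\}$ by a Whitney-Besicovitch family $\{B_i\}_i$ of small $(6,\bz_6)$-doubling balls with bounded overlap, using that such doubling balls exist at $\mu$-almost every point by \cite[Lemma 3.3]{h10}. Each $B_i$ would be equipped with a concentric doubling ``parent'' $\widehat B_i\supset B_i$ of the first scale at which $m_{\widehat B_i}(|f|)\le t$, and the key structural point is that $\wz K_{B_i,\widehat B_i}^{(6)}\le C$ by Lemma \ref{l2.9}(iii), because none of the intermediate doubling dilations of $B_i$ inside $\widehat B_i$ yields a strictly smaller admissible parent. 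When $\mu(\cx)<\fz$ and no such doubling ancestor would otherwise exist, the hypothesis $\int_\cx f\,d\mu=0$ allows us to take $\widehat B_i$ to be a ball of full measure and mean zero.

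Next, on each $B_i$ a standard chaining argument would give, for $\mu$-almost every $y\in B_i$ with $M^\sharp f(y)\le\gamma t$,
$$Nf(y)\le m_{\widehat B_i}(|f|)+C_0\,\wz K_{B_i,\widehat B_i}^{(6)}M^\sharp f(y)\le(1+C_0\gamma)t,$$
obtained by invoking the oscillation inequality $|m_B(f)-m_S(f)|\le\wz K_{B,S}^{(6)}M^\sharp f(y)$ for doubling pairs $B\subset S$ containing $y$ (immediate from the definition of $M^\sharp f$), together with \eqref{5.1} to pass from averages back to pointwise values, and control of any doubling ball through $y$ by a fixed dilate of $\widehat B_i$. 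Choosing $\gamma$ small enough makes $1+C_0\gamma<1+\eta$, so the good-$\lambda$ set is empty inside each $B_i$, and summing over $i$ (using the bounded overlap) yields \eqref{glin}.

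The main obstacle I expect is the simultaneous production of the doubling parent $\widehat B_i$ with both small mean $m_{\widehat B_i}(|f|)\le t$ \emph{and} controlled coefficient $\wz K_{B_i,\widehat B_i}^{(6)}\le C$, since the two requirements pull in opposite directions: small mean demands a large enough ball, while large balls tend to accumulate $\kbsp$. Lemma \ref{l2.9}(iii), which caps $\kbsp$ along any chain of non-doubling dilations, is the decisive input that makes this balance possible in the non-homogeneous setting.
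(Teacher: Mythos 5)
First, a point of reference: the paper does not actually prove Lemma \ref{l5.3}; it is quoted verbatim from \cite[Lemma 3.3]{lyy12}, whose proof follows Tolsa's scheme in \cite{t01}. Your overall architecture --- a good-$\lambda$ inequality, multiplication by $t^p$, absorption of the $Nf$-term, and the use of $\min\{1,Nf\}\in L^{p_0}(\mu)$ (via truncation) to secure the a priori finiteness needed for absorption --- is the right one and matches that cited argument in outline.

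However, the core of your good-$\lambda$ step has a genuine gap. You invoke Lemma \ref{l2.9}(iii) to get $\wz K_{B_i,\widehat B_i}^{(6)}\le C$, but the hypothesis of that lemma is the absence of \emph{intermediate doubling dilations} $6^kB_i$ between the two balls, whereas your stopping rule is ``first scale at which $m(|f|)\le t$''. Between $B_i$ and $\widehat B_i$ there will in general be many doubling dilations, each with mean $>t$ (that is exactly why you did not stop there), and each contributes a term $\mu(6^kB_i)/\lz(c_{B_i},6^kr_{B_i})$ to the coefficient; already for Lebesgue measure on $\rn$ one has $\wz K_{B,S}^{(6)}\sim 1+\log(r_S/r_B)$, which is unbounded along your stopping chain. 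Consequently the claimed pointwise bound $Nf(y)\le(1+C_0\gamma)t$ fails, and with it the assertion that the bad set is empty inside each Whitney ball: for $f(x)=\log(1/|x|)\chi_{B(0,1)}(x)$ on $\rn$ with Lebesgue measure, $M^\sharp f$ is bounded (by the $\mathrm{RBMO}$ property and Lemma \ref{l2.15}) while $Nf$ blows up near the origin, so $\{Nf>(1+\eta)t,\ M^\sharp f\le\gamma t\}$ is nonempty for every $\gamma$ once $t$ is large. (Had the bad set been empty, the hypothesis $\min\{1,Nf\}\in L^{p_0}(\mu)$ and the restriction $p\ge p_0$ would be superfluous --- a warning sign.) The correct argument estimates the \emph{measure}, not the emptiness, of the bad set in each Whitney ball: one subtracts $m_{\widehat B_i}(f)$, localizes to a fixed dilate $cB_i$, and applies the weak-$(1,1)$ inequality for the relevant non-centered maximal operator to $(f-m_{\widehat B_i}(f))\chi_{cB_i}$, whose $L^1(\mu)$-norm is $\ls \gamma t\,\mu(c'B_i)$ by the first half of the definition of $M^\sharp$ together with the doubling of $B_i$; this yields a bound of the form $C\gamma\eta^{-1}\mu(c'B_i)$, and the bounded overlap then gives the good-$\lambda$ inequality with a constant that tends to $0$ with $\gamma$. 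Large doubling balls through $y$ are handled not by chaining $\wz K$ but by arranging that they meet $\cx\setminus\{Nf>t\}$, where the mean of $|f|$ is automatically at most $t$.
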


Moreover, for $r\in(0,\fz)$ and $\eta\in(1,\fz)$, the
\emph{maximal operator} $M_{r,(\eta)}$ is defined by
setting, for all $f\in L^r_\loc(\mu)$ and $x\in\cx$,
\begin{equation*}
M_{r,(\eta)}f(x):=\sup_{B\ni x}
\lf[\frac1 {\mu(\eta B)}\int_B |f(y)|^r \,d\mu(y)\r]
^{\frac1 r},
\end{equation*}
where the supremum is taking over all balls $B$
containing $x$. With a proof similar to that of
\cite[Lemma 2.3]{hlyy}, we obtain the following useful
properties of $M_{r,(\eta)}$, the details being omitted.
\begin{lemma}\label{l5.4}
The following statements hold true:

{\rm (i)} Let $p\in(1,\fz)$, $r\in(1,p)$ and $\eta\in[5,\fz)$.
Then $M_{r,(\eta)}$ is bounded on $L^p(\mu)$.

{\rm (ii)} Let $r\in(0,1)$ and $\eta\in[5,\fz)$.
Then $M_{r,(\eta)}$ is bounded on $L^{1,\fz}(\mu)$, that is,
there exists a positive constant $C$ such that, for all
$f\in L^{1,\fz}(\mu)$,
\begin{eqnarray}\label{5.2}
\sup_{\sigma>0} \sigma\mu\lf(\{x\in\cx:\,
M_{r,(\eta)}f(x)>\sigma  \}\r)
\leq C\sup_{\sigma>0}\sigma\mu\lf(\{x\in\cx:\,
|f(x)|>\sigma  \}\r).
\end{eqnarray}
\end{lemma}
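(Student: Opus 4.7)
The plan is to reduce both statements to weak-type and strong-type bounds for $M_{1,(\eta)}$ via the pointwise identity $M_{r,(\eta)}f(x)=[M_{1,(\eta)}(|f|^r)(x)]^{1/r}$, which follows from the commutation of $\sup$ with $(\cdot)^{1/r}$ on non-negative quantities. The central fact I will establish is that $M_{1,(\eta)}$ is of weak-type $(1,1)$ whenever $\eta\in[5,\fz)$, from which the trivial $L^\fz$ bound and Marcinkiewicz interpolation furnish both the strong $L^q(\mu)$-boundedness and the $L^{q,\fz}(\mu)$-boundedness for every $q\in(1,\fz)$.

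First I would prove the weak-type $(1,1)$ bound. Given $f\in L^1(\mu)$ and $\lz>0$, for each $x$ in $E_\lz:=\{x\in\cx:\,M_{1,(\eta)}f(x)>\lz\}$, pick a ball $B_x\ni x$ satisfying $\int_{B_x}|f|\,d\mu>\lz\mu(\eta B_x)$. A Vitali-type covering lemma in the geometrically doubling metric space $(\cx,d)$ (applied with dilation factor $\eta\geq 5$) yields a pairwise disjoint subfamily $\{B_{x_i}\}_i$ such that $E_\lz\subset\bigcup_i \eta B_{x_i}$; summing gives
\[
\mu(E_\lz)\leq\sum_i\mu(\eta B_{x_i})\leq\lz^{-1}\sum_i\int_{B_{x_i}}|f|\,d\mu\leq\lz^{-1}\|f\|_{L^1(\mu)}.
\]
The $L^\fz$ bound $\|M_{1,(\eta)}f\|_{L^\fz(\mu)}\leq\|f\|_{L^\fz(\mu)}$ is immediate from $\mu(B)\leq\mu(\eta B)$, and Marcinkiewicz interpolation then delivers the boundedness of $M_{1,(\eta)}$ on $L^q(\mu)$ and on $L^{q,\fz}(\mu)$ for every $q\in(1,\fz)$.

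For part (i), since $p/r>1$, the identity above and the previous paragraph give
\[
\|M_{r,(\eta)}f\|_{L^p(\mu)}^p=\|M_{1,(\eta)}(|f|^r)\|_{L^{p/r}(\mu)}^{p/r}\ls\lf\||f|^r\r\|_{L^{p/r}(\mu)}^{p/r}=\|f\|_{L^p(\mu)}^p.
\]
For part (ii), since $r\in(0,1)$ forces $1/r>1$, the $L^{1/r,\fz}(\mu)$-boundedness of $M_{1,(\eta)}$, together with the elementary identity $\||f|^r\|_{L^{1/r,\fz}(\mu)}=\|f\|_{L^{1,\fz}(\mu)}^{r}$, yields
\[
\|M_{r,(\eta)}f\|_{L^{1,\fz}(\mu)}=\|M_{1,(\eta)}(|f|^r)\|_{L^{1/r,\fz}(\mu)}^{1/r}\ls\|f\|_{L^{1,\fz}(\mu)},
\]
which is exactly \eqref{5.2}.

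The main obstacle is the Vitali-type covering step: because $\mu$ is not assumed doubling, one cannot iterate with measure-theoretic enlargements, and one must instead exploit only the purely metric geometric doubling property of $(\cx,d)$ (equivalently Remark \ref{r2.5}(iv)) to extract a disjoint subfamily whose $\eta$-dilates still cover $E_\lz$. This is precisely where the hypothesis $\eta\geq 5$ enters and decouples the covering from the measure; once the covering is in hand, the remainder is standard Marcinkiewicz bookkeeping analogous to the Euclidean case.
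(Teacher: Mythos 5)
Your argument is correct and is essentially the proof the paper itself invokes by reference (to the proof of \cite[Lemma 2.3]{hlyy}): the weak $(1,1)$ bound for $M_{1,(\eta)}$ via a Vitali-type ($5r$) covering lemma, with the dilation factor $5$ absorbed by the hypothesis $\eta\ge5$ rather than any doubling of $\mu$, followed by interpolation and the pointwise identity $M_{r,(\eta)}f=[M_{1,(\eta)}(|f|^r)]^{1/r}$. The only step worth flagging is your claim that interpolation yields boundedness of $M_{1,(\eta)}$ on $L^{q,\fz}(\mu)$: the basic Marcinkiewicz theorem gives only the strong-type conclusion, so here you need either its Lorentz-space (real-interpolation) form or the standard direct argument splitting $f$ at height comparable to $\sigma$ (both routine, and the latter is what the omitted proof effectively does).
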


Now we state our interpolation theorem for sublinear operators
as follows.
\begin{theorem}\label{t5.5}
Let $p_0\in (1,\fz)$, $k\in\nn$ and $T_i$ be a sublinear operator
for all $i\in\{1,\ldots,k\}$. Suppose that

{\rm (i)} $T_1$ is bounded from $\wz H^1(\mu)$ into $L^{1,\fz}(\mu)$, that is, there exists a positive constant $C$
such that, for all $f\in \wz H^1(\mu)$ and $t\in(0,\fz)$,
\begin{equation*}
\mu\lf(\{x\in\cx:\,|T_1f(x)|>t\}\r)\leq \frac C t \|f\|_{\wz H^1(\mu)};
\end{equation*}

{\rm (ii)} when $\mu(\cx)<\fz$, for any $p\in(1,\fz)$, there exists
a positive constant $C_{(p)}$, depending on $p$, such that,
for all $f\in L^\fz_b(\mu)$,
$$\frac1{\mu(\cx)}\int_\cx|T_1f(x)|\,d\mu(x)\le C\|f\|_{L^p(\mu)};$$

{\rm (iii)} there exists a positive constant $D$ such that,
for all $f\in L^{\fz}_b(\mu)$,
\begin{equation*}
M^\sharp (T_1f)\leq \sum_{i=2}^k|T_if|+D\|f\|_{L^{\fz}(\mu)};
\end{equation*}

{\rm (iv)} $T_i$ is bounded on $L^{p_0}(\mu)$ for
all $i\in\{2,\ldots,k\}$.

Then $T_1$ is bounded on $L^p(\mu)$ for all $p\in(1,p_0)$.
\end{theorem}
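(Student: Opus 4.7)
The plan is to establish a weak-type $(p,p)$ distributional inequality for $T_1$ on a dense subclass and then extend. Fix $p\in(1,p_0)$ and $f\in L^p(\mu)\cap L^\fz_b(\mu)$. For each $t\in(0,\fz)$ I would apply the Calder\'on-Zygmund decomposition of Lemma \ref{l2.18}(iii) at height $t$ (for the case $\mu(\cx)<\fz$ I first handle small $t$ trivially using hypothesis (ii)) to split $f=g+h$, where $h=\sum_j(f\oz_j-\vz_j)$ lies in $\wz H^1(\mu)$ with
$$\|h\|_{\wz H^1(\mu)}\ls t^{1-p}\|f\|_{L^p(\mu)}^p,$$
while the good part $g:=f-h$ satisfies $\|g\|_{L^\fz(\mu)}\ls t$ and, by interpolation between $L^\fz(\mu)$ and $L^p(\mu)$, $\|g\|_{L^{p_0}(\mu)}^{p_0}\ls t^{p_0-p}\|f\|_{L^p(\mu)}^p$. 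Sublinearity then gives
$$\mu(\{x\in\cx:\,|T_1f(x)|>2t\})\le \mu(\{|T_1h|>t\})+\mu(\{|T_1g|>t\}).$$

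The first term is controlled directly by hypothesis (i): $\mu(\{|T_1h|>t\})\ls t^{-1}\|h\|_{\wz H^1(\mu)}\ls t^{-p}\|f\|_{L^p(\mu)}^p$, which is the right order. For the second term I would use \eqref{5.1} to bound $|T_1g|\le N(T_1g)$ $\mu$-a.e., then invoke Lemma \ref{l5.3} (after subtracting the mean $m_\cx(T_1g)$ when $\mu(\cx)<\fz$, whose size is controlled by hypothesis (ii)) to estimate $\|N(T_1g)\|_{L^{p_0,\fz}(\mu)}\ls \|M^\sharp(T_1g)\|_{L^{p_0,\fz}(\mu)}$. By hypothesis (iii),
$$M^\sharp(T_1g)\le \sum_{i=2}^k|T_ig|+D\|g\|_{L^\fz(\mu)},$$
and hypothesis (iv) yields $\|T_ig\|_{L^{p_0}(\mu)}\ls \|g\|_{L^{p_0}(\mu)}$ for each $i\in\{2,\ldots,k\}$. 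Combining these estimates via Chebyshev's inequality gives $\mu(\{|T_1g|>t\})\ls t^{-p_0}\|g\|_{L^{p_0}(\mu)}^{p_0}\ls t^{-p}\|f\|_{L^p(\mu)}^p$. Adding the two contributions yields the weak-type estimate $\mu(\{|T_1f|>t\})\ls t^{-p}\|f\|_{L^p(\mu)}^p$; since this holds for every $p\in(1,p_0)$, the Marcinkiewicz interpolation theorem then upgrades $T_1$ to strong type $(p,p)$ on the dense class, and a standard limiting argument concludes.

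The main obstacle I expect is the constant term $D\|g\|_{L^\fz(\mu)}\sim Dt$ appearing in (iii): a constant has infinite $L^{p_0}(\mu)$ norm when $\mu(\cx)=\fz$, so one cannot naively take $L^{p_0}$ norms on both sides of (iii). The workaround, borrowed from the proof of \cite[Theorem 1.6]{lmy14}, is to work throughout in the weak norm $\|\cdot\|_{L^{p_0,\fz}(\mu)}$ via Lemma \ref{l5.3} and to note that the contribution of $D\|g\|_{L^\fz(\mu)}$ to $\mu(\{N(T_1g)>t\})$ only needs to be estimated on the set $\{N(T_1g)>t\}$; a careful layer-cake integration, together with the already-available bound $\|g\|_{L^{p_0}(\mu)}^{p_0}\ls t^{p_0-p}\|f\|_{L^p(\mu)}^p$, absorbs this constant into the main term. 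A secondary technical issue is verifying the hypotheses of Lemma \ref{l5.3}, in particular the integrability of $\min\{1,N(T_1g)\}$ and the vanishing-integral condition when $\mu(\cx)<\fz$; this is precisely the role of hypothesis (ii).
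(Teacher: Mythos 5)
Your decomposition of $f$ and your estimates for the bad part are sound: handling $T_1h$ directly through hypothesis (i) and Lemma \ref{l2.18}(iii), and the bound $\|g\|_{L^{p_0}(\mu)}^{p_0}\ls t^{p_0-p}\|f\|_{L^p(\mu)}^p$, are exactly the right ingredients. The genuine gaps are in the passage from $M^\sharp(T_1g)$ back to $T_1g$, precisely where you flag uncertainty. First, Lemma \ref{l5.3} cannot be applied to $T_1g$: its a priori hypothesis $\min\{1,N(T_1g)\}\in L^{q}(\mu)$ for some $q\in(1,\fz)$ is unverifiable when $\mu(\cx)=\fz$. Hypothesis (ii) is only available when $\mu(\cx)<\fz$, and $g$ has no cancellation, so $g\notin\wz H^1(\mu)$ and hypothesis (i) gives no information whatsoever about $T_1g$; without some a priori control the conclusion of Lemma \ref{l5.3} can simply fail (think of a function whose sharp function vanishes). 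Second, even granting that lemma, your chain needs a bound on the full quasi-norm $\|M^\sharp(T_1g)\|_{L^{p_0,\fz}(\mu)}$, but hypothesis (iii) only yields $M^\sharp(T_1g)\le\sum_{i=2}^k|T_ig|+D\|g\|_{L^\fz(\mu)}$, and the constant $D\|g\|_{L^\fz(\mu)}\sim Dt$ has infinite $L^{p_0,\fz}(\mu)$ quasi-norm when $\mu(\cx)=\fz$: the weak norm sees all levels $\sigma$, including $\sigma\ll t$, where the constant alone exceeds the level on all of $\cx$. Your proposed remedy, restricting to the set $\{N(T_1g)>t\}$ and absorbing the constant by a layer-cake argument, is incompatible with Lemma \ref{l5.3}, which is an inequality between complete weak quasi-norms, not a level-by-level (good-$\lambda$ type) estimate; the constant can only be absorbed when the comparison is made at a single level comparable to $\|g\|_{L^\fz(\mu)}$, and that forces a different architecture.

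The paper's proof reorders the steps so that both difficulties disappear. Fixing $r\in(0,1)$, one first proves the quasi-subadditivity $M_r^\sharp(T_1(f_1+f_2))\ls M_{r,(6)}(T_1f_1)+M_r^\sharp(T_1f_2)$ and, with $f_1=h$, $f_2=g$, estimates $\mu(\{M_r^\sharp(T_1f)>ct\})$ at a level $c t$ comparable to $t$: the good-part term is handled at level $2D(\gz+1)t$ by (iii) and (iv) directly (the constant $D\|g\|_{L^\fz(\mu)}\le D(\gz+1)t$ is swallowed by the level, never by a norm), while the bad-part term $M_{r,(6)}(T_1h)$ is controlled by Lemma \ref{l5.4}(ii) (here $r<1$ is essential) together with (i). This yields the genuine weak-type bound $\sup_{t>0}t^p\mu(\{M_r^\sharp(T_1f)>t\})\ls\|f\|_{L^p(\mu)}^p$ for all $f\in L^\fz_b(\mu)$. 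Only afterwards is Lemma \ref{l5.3} invoked, applied to $|T_1f|^r$ for $f$ in the dense class $L^\fz_{b,0}(\mu)\subset\wz H^1(\mu)$ (with a mean correction via (ii) when $\mu(\cx)<\fz$), where its a priori hypothesis can actually be checked, since (i) and Lemma \ref{l5.4}(ii) give $N_r(T_1f)\ls M_{r,(6)}(T_1f)\in L^{1,\fz}(\mu)$; then $|T_1f|\le N_r(T_1f)$, Lemma \ref{l5.2}, Marcinkiewicz interpolation and density conclude. In short, the missing idea in your plan is this two-step structure: establish the all-level weak bound for the composite $M_r^\sharp\circ T_1$ first, and transfer it to $T_1$ only on a dense subclass of $\wz H^1(\mu)$ where the hypotheses of Lemma \ref{l5.3} are verifiable, rather than applying Lemma \ref{l5.3} to $T_1g$ at each level $t$.
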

\begin{proof}
Let $r\in(0,1)$. We first claim that, for all $p\in(1,p_0)$,
there exists a positive constant $C$ such that, for all
$f\in L^\fz_b(\mu)$,
\begin{eqnarray}\label{5.3}
\sup_{t>0}t^p\mu\lf(\{x\in\cx:\,|M_r^\sharp(T_1f)(x)|>t\}\r)
\leq C\|f\|_{L^p(\mu)}^p.
\end{eqnarray}
To show \eqref{5.3}, notice that, although it is unclear
whether the operator $M_r^\sharp(T_1)$ is quasi-linear or not,
we still conclude that there exists a positive constant $\wz C_{(r)}$,
depending on $r$, such that, for all
$f_1,f_2\in L_b^{\fz}(\mu)$,
\begin{eqnarray}\label{5.4}
M_r^\sharp\lf(T_1(f_1+f_2)\r)
\leq \wz C_{(r)}\lf[M_{r,(6)}(T_1f_1)+M_r^\sharp(T_1f_2)\r].
\end{eqnarray}
Indeed, by $r\in(0,1)$, we see that, for any
$f_1,f_2\in L_b^{\fz}(\mu)$,
$x\in \cx$ and ball $B\ni x$,
\begin{eqnarray}\label{5.5}
&&\frac1 {\mu(6B)} \int_B |
|T_1(f_1+f_2)(y)|^r-m_{\wz{B}}(|T_1(f_1+f_2)|^r)  |\,d\mu(y)\\
&&\quad\leq \frac1 {\mu(6B)} \int_B
\lf||T_1f_2(y)|^r-m_{\wz{B}}(|T_1f_2|^r)\r|\,d\mu(y)
+m_{\wz{B}}(|T_1f_1|^r)\noz\\
&&\quad\quad +\frac1 {\mu(6B)} \int_B |T_1f_1(y)|^r\,d\mu(y)
\ls\lf[M_r^\sharp(T_1f_2)(x) \r]^r
+\lf[M_{r,(5)}(T_1f_1)(x)\r]^r\noz
\end{eqnarray}
and, for any $(6,\bz_6)$-doubling balls $B\subset S$ with $B\ni x$,
\begin{eqnarray}\label{5.6}
&&|m_B(|T_1(f_1+f_2)|^r)-m_S(|T_1(f_1+f_2)|^r)  |\\
&&\quad\leq |m_B(|T_1f_2|^r)-m_S(|T_1f_2|^r)|
+m_B(|T_1f_1|^r)+m_S\lf(|T_1f_1|^r\r)\noz\\
&&\quad\ls\wz K_{B,S}^{(6)}
\lf[M_r^\sharp(T_1f_2)(x)\r]^r+\lf[M_{r,(6)}(T_1f_1)(x)\r]^r\noz.
\end{eqnarray}
Combining \eqref{5.5} and \eqref{5.6}, we obtain \eqref{5.4}.

For each fixed $t\in(0,\fz)$ and each $f\in L^\fz_b(\mu)$,
applying the Calder\'{o}n-Zygmund
decomposition to $|f|^p$ at level $t^p$
(with $t^p> {\gz}_0\|f\|_{L^p(\mu)}/{\mu(\cx)}$ if $\mu(\cx)< \fz$)
with the notation as in Lemma \ref{l2.18}, we obtain $f=g+h$,
where $g:=f\chi_{\cx\backslash(\cup_j{6B_j})}+\Sigma_j\varphi_j$ and
$h:=f-g=\sum_j[\omega_jf-\varphi_j]$. Moreover, we have
\begin{equation}\label{5.7}
\|g\|_{L^\fz(\mu)}\le(\gz+1)t\quad{\rm and}\quad\|g\|_{L^p(\mu)}
\ls\|f\|_{L^p(\mu)}.
\end{equation}
From this, together with \eqref{5.4}, Lemma $\ref{l5.2}$ and
Theorem \ref{t5.5}(iii), we deduce that
\begin{eqnarray*}
&&\mu\lf(\lf\{x\in\cx:\,M_r^\sharp(T_1f)(x)>
3D\wz C_{(r)}C_{(r)}(\gz+1) t\r\}\r)\\
&&\hs\leq\mu\lf(\lf\{x\in\cx:\,M_r^\sharp(T_1g)(x)>
2DC_{(r)}(\gz+1) t  \r\}\r)\\
&&\hs\hs+\mu\lf(\lf\{x\in\cx:\,M_{r,(6)}(T_1h)(x)>
DC_{(r)}(\gz+1) t  \r\}\r)\\
&&\hs\leq\mu\lf(\lf\{x\in\cx:\,M^\sharp(T_1g)(x)>
2D(\gz+1)t \r \}\r)\\
&&\hs\hs+\mu\lf(\lf\{x\in\cx:\,M_{r,(6)}(T_1h)(x)>
DC_{(r)}(\gz+1)t  \r\}\r)\\
&&\hs\le\sum_{i=2}^k\mu\lf(\lf\{x\in\cx:|T_ig(x)|>
D(\gz+1)t/(k-1) \r \}\r)\\
&&\hs\hs+\mu\lf(\lf\{x\in\cx:\,M_{r,(6)}(T_1h)(x)>
DC_{(r)}(\gz+1)t  \r\}\r).
\end{eqnarray*}
For $i\in\{2,\ldots,k\}$, the boundedness of $T_i$ on
$L^{p_0}(\mu)$ and \eqref{5.7} imply that
\begin{equation}\label{5.8}
\mu(\{x\in\cx:\,|T_ig(x)|>D(\gz+1)t \})
\ls t^{-p_0}\|T_ig\|_{L^{p_0}(\mu)}^{p_0}
\ls t^{-p}\|f\|^p_{L^p(\mu)}.
\end{equation}
By Lemma \ref{l5.4}(ii), Theorem \ref{t5.5}(i) and Lemma
\ref{l2.18}(iii), we have
\begin{eqnarray*}
\mu\lf(\lf\{x\in\cx:M_{r,(6)}(T_1h)(x)>DC_{(r)}(\gz+1)t\r\}\r)
&&\ls t^{-1}\sup_{\sigma>0}\sigma
\mu(\{x\in\cx:\,|T_1h(x)|>\sigma  \})\\
&&\ls t^{-1}\|h\|_{\wz H^1(\mu)}
\ls t^{-p}\|f\|_{L^p(\mu)}^p,
\end{eqnarray*}
which, together with the estimate $\eqref{5.8}$,
implies \eqref{5.3}.

We now conclude the proof of Theorem \ref{t5.5} by considering
the following two cases.

{\it Case (i)} $\mu(\cx)=\fz$. Let
$$L^\fz_{b,\,0}(\mu):=
\lf\{f\in L_b^\fz(\mu):\,\int_\cx f(x)\,d\mu(x)=0\r\}.$$
Then, in this case, $L^\fz_{b,\,0}(\mu)$ is dense in $L^p(\mu)$
for all $p\in(1, \fz)$. Let $N_rf:= [N(|f|^r)]^{1/r}$ for
all $f\in L^r_{\rm loc}(\mu)$.
We now show that, for all $f\in L^\fz_{b,\,0}(\mu)$,
$\min\{1, N_r(T_1f)\}\in L^p(\mu)$ for all $p\in(1,\fz)$.
Indeed, for all $f\in L^\fz_{b,\,0}(\mu)$, we see that
$f\in {\wz H^1(\mu)}$. Moreover, by the definitions of $N_r$
and $M_{r,(6)}$ with $r\in(0,1)$, we know that,
for $\mu$-almost every $x\in\cx$,
$N_r(T_1f)(x)\ls M_{r,(6)}(T_1f)(x)$.
It then follows, from Lemma \ref{l5.4}(ii)
and Theorem \ref{t5.5}(i), that, for all $r\in(0,1)$,
$$
\|N_r(T_1f)\|_{L^{1,\fz}(\mu)}
\ls\|M_{r,(6)}(T_1f)\|_{L^{1,\fz}(\mu)}
\ls\|T_1f\|_{L^{1,\fz}(\mu)}
\ls\|f\|_{\wz H^1(\mu)},
$$
which implies that, for all $p\in(1,\fz)$,
\begin{eqnarray*}
&&\int_{\cx} \lf[\min\{1,N_r(T_1f)\}(x)\r]^p\,d\mu(x)\\
&&\quad =p\int_0^2 t^{p-1}
\mu(\{x\in\cx:\,\min\{1,N_r(T_1f) \}(x)>t  \})\,dt
+p\int_2^\fz \cdots\\
&&\quad\ls p\int_0^2 t^{p-1}
\mu(\{x\in\cx:\,\min\{1,N_r(T_1f) \}(x)>t  \})\,dt\\
&&\quad\ls\|N_r(T_1f)\|_{L^{1,\fz}(\mu)}\int_0^2 t^{p-2}\,dt
\ls\|N_r(T_1f)\|_{L^{1,\fz}(\mu)}<\fz.
\end{eqnarray*}
Thus, for all $f\in L^\fz_{b,\,0}(\mu)$,
$\min\{1,N_r(T_1f)\}\in L^p(\mu)$.
From this, \eqref{5.1}, Lemma \ref{l5.3} and \eqref{5.3},
we deduce that, for all $f\in L^\fz_{b,\,0}(\mu)$
and all $p\in(1,p_0)$,
\begin{eqnarray*}
\|T_1f\|_{L^{p,\fz}(\mu)}
&&\leq \lf\|N_r(T_1f)\r\|_{L^{p,\fz}(\mu)}
=\lf\|N(|T_1f|^r)\r\|_{L^{p/r,\fz}(\mu)}^{1/r}
\ls\lf\|M^\sharp(|T_1f|^r)\r\|_{L^{p/r,\fz}(\mu)}^{1/r}\\
&&\sim\lf\|M_r^{\sharp}(T_1f)\r\|_{L^{p,\fz}(\mu)}
\ls\|f\|_{L^p(\mu)},
\end{eqnarray*}
which, along with the Marcinkiewicz interpolation theorem
and a standard density argument, implies that, for all
$p\in(1,p_0)$ and $f\in L^p(\mu)$,
\begin{eqnarray*}
\|T_1f\|_{L^p(\mu)}\ls\|f\|_{L^p(\mu)}.
\end{eqnarray*}

{\it Case (ii)} $\mu(\cx)<\fz$.  In this case, for
all $r\in(0,1)$, $f\in L_b^{\fz}(\mu)$ and $x\in\cx$,
we see that
\begin{eqnarray*}
|T_1f(x)|&&\leq \lf[N(|T_1f|^r)(x)\r]^{1/r}\\
&&\ls\lf\{N\lf(|T_1f|^r-{\frac1 {\mu(\cx)}}
\int_{\cx}|T_1f(y)|^r\,d\mu(y)   \r)(x)\r\}^{1/r}\\
&&\quad+\lf\{\frac1 {\mu(\cx)}
\int_{\cx} |T_1f(x)|^r\,d\mu(x)  \r\}^{1/r}
=:{\rm E}(x)+{\rm F}^{1/r}.
\end{eqnarray*}
Observe that $\int_\cx [|T_1f(y)|^r-{\rm F}]\,d\mu(y)=0$ and,
for all $p\in(1,\fz)$,
$$\int_\cx \lf[\min\{1,N(|T_1f|^r-{\rm F})(x)\}\r]^p\,d\mu(x)
<\mu(\cx)<\fz.$$
From this, together with Lemma \ref{l5.3}, $M^\sharp({\rm F})=0$
and \eqref{5.3}, we deduce that, for all $p\in(1,p_0)$,
$$\lf\|N(|T_1f|^r-\rm F)\r\|_{L^{p/r,\fz}(\mu)}^{1/r}
\ls\lf\|M^\sharp(|T_1f|^r)\r\|_{L^{p/r,\fz}(\mu)}^{1/r}
\sim\lf\|M_r^{\sharp}(T_1f)\r\|_{L^{p,\fz}(\mu)}
\ls\|f\|_{L^p(\mu)}.$$
A trivial computation via the H\"older inequality and
Theorem \ref{t5.5}(ii) leads to, for all $p\in(1,\fz)$,
$$ {\rm F}^{1/r}\ls\frac1{\mu(\cx)}\int_\cx|T_1f(x)|\,d\mu(x)
\ls\|f\|_{L^p(\mu)}.$$
Combining the above two estimates, we obtain the desired
conclusion also in this case, which completes the proof of
Theorem \ref{t5.5}.
\end{proof}

To prove Theorem \ref{t5.1}, we also need the following
pointwise estimate.

\begin{theorem}\label{t5.6}
Let $b\in{L^\fz(\mu)}$, the operator $T$ with kernel $K$ be
the same as in Theorem \ref{t3.1} and $T_b$ as
in \eqref{3.4}. Suppose that $T$ is bounded on $L^2(\mu)$.
Then, for any $s\in{(1,\fz)}$, there exists a positive
constant $C_{(s)}$, depending on $s$, such that,
for all $f\in L_b^\fz(\mu)$,
\begin{equation}\label{5.9}
M^\sharp\lf(T_bf\r)\leq C_{(s)}\|b\|_{\rbmo}
\lf[\|f\|_{L^{\fz}(\mu)}
+M_{s,(5)}f+M_{s,(6)}Tf+T_*f\r],
\end{equation}
where $T_*$ denotes the \emph{maximal Calder\'on-Zygmund operator}
defined by setting, for all $f\in L^\fz_b(\mu)$ and $x\in\cx$,
$$ T_*f(x):=\sup_{\varepsilon>0}
\lf|\int_{d(x,y)>\varepsilon} K(x,y)f(y)\,d\mu(y)\r|.$$
\end{theorem}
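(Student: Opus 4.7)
The plan is to fix a ball $B\ni x$, set $b_B:=m_{\wz{B}}(b)$ with $\wz{B}=\wz{B}^6$ (the smallest $(6,\bz_6)$-doubling ball of the form $6^j B$), and perform the standard commutator splitting, for $y\in B$:
\begin{align*}
T_b f(y) &= \lf[b(y)-b_B\r]Tf(y) - T\lf(\lf[b-b_B\r]f\chi_{(6/5)B}\r)(y)\\
&\quad - T\lf(\lf[b-b_B\r]f\chi_{\cx\setminus(6/5)B}\r)(y)=:\mathrm{I}(y)+\mathrm{II}(y)+\mathrm{III}(y).
\end{align*}
For the mean-oscillation piece of $M^\sharp(T_b f)$, I would subtract the constant $h_B:=\mathrm{III}(c_B)$ from $T_b f$ on $B$ and bound the three resulting pieces separately. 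The term $\mathrm{I}$ is handled by H\"older's inequality with conjugate exponents $(s',s)$ together with Corollary \ref{c2.17}, giving a bound by $\|b\|_{\rbmo} M_{s,(6)}Tf(x)$. The term $\mathrm{II}$ is handled by the $L^p(\mu)$-boundedness of $T$ (Lemma \ref{l3.2}\,(ii)) for some $p\in(1,s)$, followed by a H\"older step with exponent $s/p$ and another application of Corollary \ref{c2.17}, giving $\|b\|_{\rbmo} M_{s,(5)}f(x)$. For $\mathrm{III}(y)-\mathrm{III}(c_B)$, I would split $\cx\setminus(6/5)B$ into the dyadic annuli $\{6^{k+1}B\setminus 6^k B\}_{k\ge 1}$, apply the H\"ormander-type condition \eqref{3.2} with $y'=c_B$, and use the generalized H\"older inequality of Lemma \ref{l3.3} paired with Proposition \ref{p2.16} to absorb the integrable oscillation of $b$ at scale $6^{k+1}B$; the telescoping term $|b_{6^{k+1}B}-b_B|\lesssim k\|b\|_{\rbmo}$ from Lemma \ref{l2.15}\,(ii) and Lemma \ref{l2.9}, combined with $\|f\chi_{\cx\setminus(6/5)B}\|_{L^\infty(\mu)}\le\|f\|_{L^\infty(\mu)}$, then produces the contribution $\|b\|_{\rbmo}\|f\|_{L^\infty(\mu)}$.

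For the other part of $M^\sharp$, namely the comparison $|m_B(T_b f)-m_S(T_bf)|/\wz K_{B,S}^{(6)}$ for $(6,\bz_6)$-doubling balls $B\subset S$ containing $x$, I would use the same splitting on each ball with the common reference constant $b_B$. The contributions from $\mathrm{I}$ and $\mathrm{II}$ on $S$ are controlled as above after invoking $|b_S-b_B|\lesssim\wz K_{B,S}^{(6)}\|b\|_{\rbmo}$ (Lemma \ref{l2.15}), which is precisely the growth that the denominator $\wz K_{B,S}^{(6)}$ can absorb. The tail piece amounts to comparing integrals of $[b-b_B]f$ against $K$ over $\cx\setminus(6/5)B$ versus over $\cx\setminus(6/5)S$; the intermediate portion $T([b-b_B]f\chi_{S\setminus(6/5)B})$ is precisely where $T_*f$ enters, after an $\varepsilon$-truncation argument exploiting $|T_\varepsilon f(z)|\le T_*f(z)$, while the remaining far portion is handled by the same annular H\"ormander estimate used for $\mathrm{III}-h_B$.

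The main obstacle I anticipate is the delicate balancing of the discrete coefficients $\wz K$ against the tail sums. In $\mathrm{III}(y)-\mathrm{III}(c_B)$, the $k$-factor built into the H\"ormander-type condition \eqref{3.2} is exactly what cancels the linear growth $|b_{6^{k+1}B}-b_B|\lesssim k\|b\|_{\rbmo}$, so it is essential that we have \eqref{3.2} rather than merely the pointwise smoothness \eqref{3.10}. Lemma \ref{l2.3} is also indispensable: when the basepoint of $\lambda$ is shifted from $c_B$ to $y\in B$ inside the size estimates for $K$, it guarantees that the resulting quantities remain comparable, so no spurious $\wz K_{B,S}^{(6)}$ factor is introduced during the estimation of $\mathrm{III}$. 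Once these pieces are aligned, assembling the bounds for $\mathrm{I}$, $\mathrm{II}$, $\mathrm{III}-h_B$, together with the doubling-ball telescoping and the $T_*f$ truncation, yields the pointwise inequality \eqref{5.9}.
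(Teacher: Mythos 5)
Your splitting and your treatment of the first component of $M^\sharp$ (the mean oscillation over a ball $B\ni x$) essentially reproduce the paper's route to \eqref{5.10}, and the ingredients you cite there (H\"older with Corollary \ref{c2.17}, $L^p$-boundedness of $T$, annuli plus \eqref{3.2} with Lemma \ref{l3.3} and Proposition \ref{p2.16}) are the right ones. The genuine gap is in the second component of $M^\sharp$, the comparison $|m_B(T_bf)-m_S(T_bf)|/\wz K_{B,S}^{(6)}$ for doubling balls $B\subset S\ni x$. When you estimate the tail/intermediate contribution by the annular computation (size condition on $6^{k+1}B\setminus 6^kB$ together with the oscillation of $b$), you unavoidably pick up \emph{two} factors tied to the gap between $B$ and $S$: one from summing $\mu(6^kB)/\lz(c_B,6^kr_B)$ over the annuli, and one from $|m_{\wz{6^{k+1}B}}(b)-b_B|$, whether you bound the latter by $k\|b\|_{\rbmo}$ or by $\wz K_{B,S}^{(6)}\|b\|_{\rbmo}$. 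The resulting bound is of order $[\wz K_{B,S}^{(6)}]^2$ (this is exactly the paper's term ${\rm M}_3$ in the proof of \eqref{5.11}), and a single power of $\wz K_{B,S}^{(6)}$ in the denominator cannot absorb it, contrary to your claim that the $\wz K$-growth is ``precisely what the denominator can absorb.'' The paper closes this gap with Lemma \ref{l5.7}: it proves the constant-comparison estimate \eqref{5.11} with $[\wz K_{B,S}^{(6)}]^2$, uses it only for doubling balls with $\wz K_{B,S}^{(6)}\le P_0$ (where it is an absolute constant times the bracketed maximal quantities), and then Lemma \ref{l5.7} upgrades this to a bound linear in $\wz K_{B,S}^{(6)}$ for \emph{all} doubling balls, which together with \eqref{5.10} yields \eqref{5.9}. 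Without this regularization step (or a substitute), your direct estimate of the doubling-ball term does not close.

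A secondary inaccuracy: you keep the common reference constant $b_B=m_{\wz B}(b)$ on both balls and assert that $T_*f$ enters through the intermediate portion $T([b-b_B]f\chi_{S\setminus(6/5)B})$ via an $\varepsilon$-truncation. But $T_*f(x)$ only dominates truncations of $\int K(x,\cdot)f$, so it can be invoked only once the $b$-factor has been reduced to a constant that can be pulled out of the integral. In the paper this is exactly why the two balls carry \emph{different} reference constants $m_{\wz B}(b)$ and $m_{\wz S}(b)$: the constant $[m_{\wz B}(b)-m_{\wz S}(b)]$, of size $\ls\wz K_{B,S}^{(6)}\|b\|_{\rbmo}$, multiplies $T(f\chi_{\cx\setminus 6B})$, and it is for that term (${\rm M}_2$) that the truncation argument produces $T_*f(x)+M_{s,(5)}f(x)+\|f\|_{L^\fz(\mu)}$. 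With your common constant you would still have to split $b-b_B=(b-m_{\wz S}(b))+(m_{\wz S}(b)-b_B)$ on the intermediate and far regions, i.e., reproduce the paper's five-term decomposition, and the non-constant part of the intermediate region is precisely the ${\rm M}_3$-type term carrying the quadratic factor discussed above. Note also that \eqref{3.2} only controls kernel differences at distances $>6R$ from the support of the variables being compared, so before applying it to $y\in B$, $z\in S$ you must first peel off an intermediate shell (the paper's $6^{N_2}B$) by the size condition; the far-region estimate as you state it does not directly apply to points just outside $(6/5)S$.
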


To prove Theorem \ref{t5.6}, we begin with the following technical
lemma from \cite[Lemma 3.13]{fyy12}.

\begin{lemma}\label{l5.7}
There exists a positive constant $P_0$ (big enough),
depending on $C_{(\lz)}$ in \eqref{2.1} and $\bz_6$ as
in \eqref{2.3}, such that, if $x\in\cx$ is some fixed point
and $\{f_B\}_{B\ni x}$ is a collection of numbers such that
$|f_B-f_S|\leq C_{(x)}$ for all doubling balls $B\subset S$
with $x\in B$ such that $\wz K_{B,S}^{(6)}\leq P_0$,
then there exists a positive constant $C$,
depending only on $C_{(\lz)}$, $\bz_6$ and $P_0$,
such that, for all doubling balls $B\subset S$ with $x\in B$,
\begin{eqnarray*}
|f_B-f_S|\leq C\wz K_{B,S}^{(6)}C_{(x)}.
\end{eqnarray*}
\end{lemma}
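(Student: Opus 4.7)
The plan is to build a chain $B=B_0\st B_1\st\cdots\st B_m=S$ of $(6,\bz_6)$-doubling balls containing $x$, on each link of which the hypothesis applies, and then to telescope:
$$
|f_B-f_S|\leq\sum_{i=1}^m|f_{B_{i-1}}-f_{B_i}|\leq mC_{(x)},
$$
reducing the lemma to the estimate $m\leq C\wz K_{B,S}^{(6)}$.

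To construct the chain I would first use Lemma \ref{l2.9}(ii), (iii) and (iv) to extract a structural constant $C^*=C^*(C_{(\lz)},\bz_6,\nu)$ with the property that, for any $(6,\bz_6)$-doubling ball $B'$, the smallest $(6,\bz_6)$-doubling ball of the form $6^kB'$ with $k\geq 1$ has $\wz K$-coefficient with $B'$ at most $C^*$. Take $P_0$ to be a sufficiently large multiple of $C^*$ (say $P_0\geq 4cC^*$, with $c$ the constant from Lemma \ref{l2.9}(iv)), and define $B_i$ inductively as the \emph{largest} $(6,\bz_6)$-doubling ball of the form $6^kB_{i-1}$, $k\geq 1$, for which $\wz K_{B_{i-1},B_i}^{(6)}\leq P_0$; once this scale reaches $r_S$, I truncate, setting $B_m:=S$, and invoke Lemma \ref{l2.9}(i) to ensure $\wz K_{B_{m-1},S}^{(6)}\leq P_0$, so the hypothesis still controls the last link. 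The upper bound $\wz K_{B_{i-1},B_i}^{(6)}\leq P_0$ is automatic; the lower bound $\wz K_{B_{i-1},B_i}^{(6)}\geq P_0/2$ on each internal link follows from the maximality of $k$: the next atomic doubling candidate $B'_{i+1}$ must violate $\wz K\leq P_0$, and Lemma \ref{l2.9}(iv) combined with $\wz K_{B_i,B'_{i+1}}^{(6)}\leq C^*$ then forces $\wz K_{B_{i-1},B_i}^{(6)}>P_0-cC^*\geq P_0/2$.

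The main obstacle, and the heart of the argument, is to derive $m\leq C\wz K_{B,S}^{(6)}$. Here I would exploit that all the $B_i$ before truncation are concentric $6$-dilates of $B$: writing $B_i=6^{n_i}B$ and $a_k:=\mu(6^kB)/\lz(c_B,6^kr_B)$, Definition \ref{d2.7} expands $\wz K_{B_{i-1},B_i}^{(6)}-1=\sum_{k=n_{i-1}}^{n_i}a_k$, and summing over $i$ yields the near-additivity
$$
\sum_{i=1}^m\lf(\wz K_{B_{i-1},B_i}^{(6)}-1\r)\leq\wz K_{B,B_m}^{(6)}-1+(m-1)\leq C\wz K_{B,S}^{(6)}+m,
$$
where the last step uses Lemma \ref{l2.9}(i) (or (iv)) to absorb the bounded-cost replacement $B_m\leftarrow S$ into $\wz K_{B,S}^{(6)}$. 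Combined with the link-wise lower bound $\wz K_{B_{i-1},B_i}^{(6)}-1\geq P_0/2-1$, this forces $m(P_0/2-2)\leq C\wz K_{B,S}^{(6)}$, and for $P_0$ chosen sufficiently large (depending only on $C_{(\lz)},\bz_6,\nu$) this gives $m\lesssim\wz K_{B,S}^{(6)}$, completing the proof. The remaining bookkeeping consists in verifying that the generic case of non-concentric $B,S$ reduces to the concentric one by comparing $S$ with a concentric $(6,\bz_6)$-doubling parent and paying a bounded cost, which is controlled by Lemma \ref{l2.9}(iii).
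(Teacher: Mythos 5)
First, note that the paper does not prove Lemma \ref{l5.7} at all: it is quoted from \cite[Lemma 3.13]{fyy12} (ultimately going back to Tolsa's chaining lemma in \cite{t01}), so your proposal can only be judged against that standard argument, and its skeleton is indeed the right one: a chain of $(6,\bz_6)$-doubling concentric dilates of $B$ containing $x$, a per-link lower bound $\wz K^{(6)}\gtrsim P_0$ obtained from Lemma \ref{l2.9}(iii)--(iv) and maximality, and the near-additivity of the discrete coefficient along concentric chains (your telescoping identity is correct, and the extra $+(m-1)$ is justified because each boundary term $\mu(6^{n_i}B)/\lz(c_B,6^{n_i}r_B)\le 1$ by \eqref{2.1}). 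Up to the truncation scale, this is essentially the known proof.

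The genuine gap is at the junction with $S$, i.e.\ the only step where non-concentric balls enter. As written, ``truncate, setting $B_m:=S$, and invoke Lemma \ref{l2.9}(i) to ensure $\wz K_{B_{m-1},S}^{(6)}\le P_0$'' does not work, for two reasons. First, $B_{m-1}$ is a concentric dilate of $B$ of radius comparable to $r_S$, and since $c_B$ may lie close to the boundary of $S$ you need not have $B_{m-1}\st S$ (nor $S\st B_{m-1}$); but the hypothesis of the lemma applies only to \emph{nested} doubling balls, so it cannot be applied to the pair $(B_{m-1},S)$ at all. Second, even under containment, Lemma \ref{l2.9}(i) only compares $\wz K_{B,R}^{(\rho)}$ with $\wz K_{B,S}^{(\rho)}$ for a nested triple up to a multiplicative constant; it does not produce an absolute bound by $P_0$. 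The standard repair, which your closing sentence gestures at but does not carry out, is to insert an auxiliary doubling ball: choose $N$ with $6^Nr_B\sim r_S$ and $S\st 6^NB$, let $\widehat{S}:=\widetilde{6^NB}$ be its smallest $(6,\bz_6)$-doubling dilate, and check via Lemma \ref{l2.9}(ii), (iii) and (iv) that $\wz K_{B_{m-1},\widehat{S}}^{(6)}$ and $\wz K_{S,\widehat{S}}^{(6)}$ are bounded by structural constants; then take $P_0$ larger than these constants and apply the hypothesis twice, to $B_{m-1}\st\widehat{S}$ and to $S\st\widehat{S}$, which adds only a bounded number of links. The same device is what justifies the counting step $\wz K_{B,B_{m-1}}^{(6)}\ls\wz K_{B,S}^{(6)}$ (via $B_{m-1}\st 6^NB$, Lemma \ref{l2.9}(i), and the comparable-radius bound of Lemma \ref{l2.9}(ii)), which you also use without proof. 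With these verifications supplied, your argument closes correctly and coincides with the cited proof.
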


\begin{proof}[Proof of Theorem \ref{t5.6}]
We first show that, for all $x$ and balls $B$ with $B\ni x$,
\begin{eqnarray}\label{5.10}
&&\frac1 {\mu(6B)}\int_B |T_bf(y)-h_B|\,d\mu(y)\\
&&\quad\ls\|b\|_{\rbmo}\lf[M_{s,(5)}f(x)+M_{s,(6)}Tf(x)
+\|f\|_{L^{\fz}(\mu)}\r]\noz
\end{eqnarray}
and, for all balls $B\subset S$ with $B\ni x$,
\begin{equation}\label{5.11}
|h_B-h_S|\ls\|b\|_{\rbmo}\lf[\wz K_{B,S}^{(6)}\r]^2
\lf[M_{s,(5)}f(x)+T^*f(x)+\|f\|_{L^{\fz}(\mu)} \r],
\end{equation}
where
$$ h_B:=-m_B\lf(T([b-m_{\wz{B}}(b)]
f\chi_{{\cx}\backslash{(6/5)B}})\r)$$
and
$$ h_S:=-m_S\lf(T([b-m_{\wz{S}}(b)]
f\chi_{{\cx}\backslash{(6/5)S}})\r).$$
The hypotheses $b\in{L^{\fz}(\mu)}$ and $f\in{L^\fz_b(\mu)}$
imply that $h_B$ and
$h_S$ are both finite.

The proof of \eqref{5.10} is analogous to that of
\cite[(10)]{hmy05} with a slight modification, the details
being omitted.

We now show \eqref{5.11}. For any two balls $B\subset S$
with $B\ni x$, let
$N_2:=N_{B,S}^{(6)}+\lfloor \log_{\rho}2\rfloor+2$.
Write
\begin{eqnarray*}
&&|h_B-h_S|\\
&&\quad=\lf|m_B\lf(T\lf(\lf[b-m_{\wz{B}}(b)\r]f
\chi_{{\cx}\backslash{(6/5)
B}}\r)\r)-m_S\lf(T\lf(\lf[b-m_{\wz{S}}(b)\r]f
\chi_{{\cx}\backslash{(6/5)S}}\r)\r)\r|\\
&&\quad\leq \lf|m_B\lf(T\lf(\lf[b-m_{\wz{B}}(b)\r]f
\chi_{{6B}\backslash{(6/5)B}}\r)\r)\r|
+\lf|m_B\lf(T\lf(\lf[m_{\wz{B}}(b)-m_{\wz{S}}(b)\r]f
\chi_{{\cx}\backslash{6B}}\r)\r)\r|\\
&&\quad\quad+\lf|m_B\lf(T\lf(\lf[b-m_{\wz{S}}(b)\r]f
\chi_{{6^{N_2}B}\backslash{6B}}\r)\r) \r |\\
&&\quad\quad+\lf|m_B\lf(T\lf(\lf[b-m_{\wz{S}}(b)\r]f
\chi_{{\cx}\backslash{6^{N_2}B}}\r)\r)
-m_S\lf(T\lf(\lf[b-m_{\wz{S}}(b)\r]f
\chi_{{\cx}\backslash{6^{N_2}B}}\r)\r)  \r|\\
&&\quad\quad+\lf|m_S\lf(T\lf(\lf[b-m_{\wz{S}}(b)\r]f
\chi_{{6^{N_2}B}\backslash{(6/5)S}}\r)\r)  \r|\\
&&\quad =:{\rm M}_1+{\rm M}_2+{\rm M}_3+{\rm M}_4+{\rm M}_5.
\end{eqnarray*}
By a slight modified argument similar to that used in
the proof of \cite[Theorem 7.6]{ad13}, we conclude that,
for all $x\in\cx$,
$${\rm M}_1+{\rm M}_5\ls\|b\|_{\rbmo}M_{s,(5)}f(x)$$
and
$${\rm M}_3\ls\lf[\wz K_{B,S}^{(6)}\r]^2\|b\|_
{\rbmo}M_{s,(5)}f(x).$$
To estimate ${\rm M}_2$, for $x,y\in B$, write
$$
\lf|T\lf(f\chi_{\cx\backslash{6B}}\r)(y)\r|
\leq \lf|T(f\chi_{\cx\backslash{6B}})(y)
-T(f\chi_{\cx\backslash{6B}})(x)\r|
+\lf|T(f\chi_{\cx\backslash{6B}})(x)\r|
=:\rm I+\rm II.
$$
From \eqref{3.2}, we deduce that
\begin{eqnarray*}
{\rm I}&&\leq\int_{\cx\backslash{6B}}
|K(y,z)-K(x,z)||f(z)|\,d\mu(z)\\
&&\leq\sum_{k=1}^\fz\int_{6^{k+1}B\backslash{6^kB}}
\lf|K(y,z)-K(x,z)\r||f(z)|\,d\mu(z)
\ls\|f\|_{L^{\fz}(\mu)}.
\end{eqnarray*}
The definition of $T_*$, together with \eqref{3.1},
the fact that $d(x, z)\sim d(c_B, z)$ for $x\in B$ and
$z\in \cx$ with $d(z,x)>2r_B$ and \eqref{2.2}, implies that
\begin{eqnarray*}
{\rm II}&&=\lf|\int_{\{z\in\cx:\,d(z,x)>2r_B\}}
K(x,z)f(z)\,d\mu(z)
-\int_{\{z\in 6B:\,d(z,x)>2r_B\}} K(x,z)f(z)\,d\mu(z)\r|\\
&&\ls\lf|\int_{\{z\in\cx:\,d(z,x)>2r_B\}} K(x,z)f(z)\,d\mu(z)\r|
+\int_{\{z\in 6B:\,d(z,x)>2r_B\}}
\frac1 {\lz(x,d(x,z))}|f(z)|\,d\mu(z)\\
&&\ls T_*f(x)+\int_{\{z\in 6B:\,d(z,x)>2r_B\}}
\frac1 {\lz(c_B,d(z,c_B))}|f(z)|\,d\mu(z)\\
&&\ls T_*f(x)+\frac1 {\lz(c_B,r_B)}\int_{6B} |f(z)|\,d\mu(z)
\ls T_*f(x)+M_{s,{(5)}}f(x).
\end{eqnarray*}
Thus, for $x,\,y\in B$, we have
$$
\lf|T\lf(f\chi_{\cx\backslash{6B}}\r)(y)\r|
\ls\|f\|_{L^{\fz}(\mu)}+T_*f(x)+M_{s,(5)}f(x),
$$
which, together with \eqref{2.8} and Lemma \ref{l2.9},
shows that
\begin{eqnarray*}
{\rm M}_2&&=\lf|m_B\lf(T\lf(\lf[m_{\wz{B}}(b)-m_{\wz{S}}(b)\r]
f\chi_{{\cx}\backslash{6B}}\r)\r)\r|\\
&&\ls\|b\|_{\rbmo}\wz K_{B,S}^{(6)}
\lf[\|f\|_{L^{\fz}(\mu)}+T_*f(x)+M_{s,(5)}f(x) \r].
\end{eqnarray*}
Finally, we deal with the term ${\rm M}_4$. As in the treatment
for the term ${\rm H}$ in the proof of Theorem \ref{t3.1},
an argument involving the generalization
of the H\"older inequality (see Lemma \ref{l3.3}) gives us that,
for any $y,\,z\in S$,
\begin{eqnarray*}
&&\lf|T\lf(\lf[b-m_{\wz{S}}(b)\r]
f\chi_{{\cx}\backslash{6^{N_2}B}}\r)(y)
-T\lf(\lf[b-m_{\wz{S}}(b)\r]
f\chi_{{\cx}\backslash{6^{N_2}B}}\r)(z)\r|\\
&&\hs\le\int_{{\cx}\backslash{2S}}
\lf|K(y,w)-K(z,w)\r||b(w)-m_{\wz{S}}(b)||f(w)|\,d\mu(w)\\
&&\hs\le \|f\|_{L^{\fz}(\mu)}\sum_{k=1}^{\fz}
\int_{{6^k2S}\backslash{6^{k-1}2S} }
|K(y,w)-K(z,w)||b(w)-m_{\wz{S}}(b)|\,d\mu(w)\\
&&\hs\ls\|b\|_{\rbmo}\|f\|_{L^{\fz}(\mu)}\\
&&\hs\hs\times\sum_{k=1}^{\fz}
\lf[ k\int_{{6^k2S}\backslash{6^{k-1}2S} }
|K(y,w)-K(z,w)|\,d\mu(w)+2^{-k}\r]\\
&&\hs\ls\|b\|_{\rbmo}\|f\|_{L^{\fz}(\mu)}.
\end{eqnarray*}
Taking the mean over $B$ and $S$ for $y$ and $z$,
respectively, we obtain
$${\rm M}_4\ls\|b\|_{\rbmo}\|f\|_{L^{\fz}(\mu)}.$$
Combining the estimates for ${\rm M}_1,\,{\rm M}_2,
\,{\rm M}_3,\,{\rm M}_4$ and $\rm M_5$, we obtain the
desired estimate \eqref{5.11}.

By an argument similar to that used in the proof of
\cite[Theorem 9.1]{t01} (see also the proof of
\cite[Theorem 7.6]{ad13}), together with Lemma \ref{l5.7},
\eqref{5.10} and \eqref{5.11}, we obtain
\eqref{5.9}, which completes the proof of Theorem \ref{t5.6}.
\end{proof}

We finally give the proof of Theorem \ref{t5.1}.

\begin{proof}[Proof of Theorem \ref{t5.1}]
We first show that, if the
Caler\'{o}n-Zygmund operator $T$ with kernel
satisfying \eqref{3.1} and \eqref{3.2} is bounded on $L^2(\mu)$,
then $T_*$ is bounded on $L^p(\mu)$ for all $p\in(1,\fz)$.
Indeed, Liu et al. \cite{lmy14} proved that, if $T$ with kernel
satisfying \eqref{3.1} and the \emph{H\"ormander condition},
that is, there exists a positive constant $C$ such that,
for all $x,\,\wz x\in\cx$ with $x\neq\wz x$,
$$
\dint_{d(x,y)\ge2d(x,\wz x)}[|K(x, y)-K(\wz x,y)|
+|K(y,x)-K(y,\wz x)|]\,d\mu(y)\le C,
$$
is bounded on $L^2(\mu)$, then the corresponding maximal operator
$T_*$ is bounded on $L^p(\mu)$ for all $p\in(1,\fz)$.
Since the H\"ormander-type condition \eqref{3.2} is slightly
stronger than the above H\"ormander condition, we obtain the
desired result.

On the other hand, by Lemma \ref{l3.2}(ii) and Lemma \ref{l5.4}(i),
we conclude that, for all
$p\in(1,\fz)$ and $s\in(1,p)$, $M_{s,(6)}\circ T$
is bounded on $L^p(\mu)$.

Now we assume that $b$ is bounded and consider the
following two cases for $\mu(\cx)$.

{\it Case (i)} $\mu(\cx)=\fz$. In this case, from the fact that,
for all $p\in(1,\fz)$ and $s\in(1,p)$, $M_{s,(5)}$,
$M_{s,(6)}\circ T$ and $T_*$ are bounded on $L^p(\mu)$ and
Theorems \ref{t3.1}, \ref{t5.6} and \ref{t5.5}, we deduce that
$T_b$ is bounded on $L^p(\mu)$ for all $p\in(1,\fz)$.

{\it Case (ii)} $\mu(\cx)<\fz$. In this case, by Corollary
\ref{c2.17} and the Lebesgue dominated convergence theorem,
we find that, for all $r\in(1,\,\fz)$,
\begin{equation}\label{5.12}
\lf[\dfrac1{\mu(\cx)}\dint_\cx
|b(x)-m_\cx(b)|^r\,d\mu(x)\r]^{1/r}\ls\|b\|_{\rbmo}.
\end{equation}
Write
$$|T_b f|\leq\lf|\lf[b-m_\cx(b)\r]T f\r|+
\lf|T\lf(\lf[b-m_\cx(b)\r]f\r)\r|.$$
Then, for all $p\in(1,\fz)$, from the H\"older inequality,
\eqref{5.12} and the boundedness of $T$ on $L^q(\mu)$ for
all $q\in(1,p]$, it follows that
$$\dfrac1{\mu(\cx)}\dint_\cx|T_bf(x)|\,d\mu(x)
\ls\|b\|_{\rbmo}\|f\|_{L^p(\mu)},$$
which, together with Theorems \ref{t3.1}, \ref{t5.6} and
\ref{t5.5}, implies that $T_b$ is also bounded on
$L^p(\mu)$ for all $p\in(1,\fz)$ in this case.

If $b$ is not bounded, let $q\in(0,\fz)$ and, for all
$x\in\cx$,
\begin{eqnarray*}
b_q(x):=
\begin{cases}
b(x), & {\rm if}\ |b(x)|\leq q, \\[2mm]
q\dfrac{b(x)} {\lf|b(x)\r|}, & {\rm if}\ |b(x)|>q.
\end{cases}
\end{eqnarray*}
By an argument similar to that used in the proof of
\cite[Lemma 3.11]{fyy12}, we see that $b_q\in \rbmo$ and
$\|b_q\|_{\rbmo}\ls\|b\|_{\rbmo}$, which, together with a
standard limit argument, completes the proof of Theorem
\ref{t5.1}.

\end{proof}

\bigskip

\medskip

\noindent Haibo Lin

\smallskip

\noindent College of Science, China Agricultural University,
Beijing 100083, People's Republic of China

\smallskip

\noindent{\it E-mail}: \texttt{haibolincau@126.com} (H. Lin)

\bigskip

\noindent Suqing Wu and Dachun Yang (Corresponding author)

\smallskip

\noindent School of Mathematical Sciences, Beijing Normal
University, Laboratory of Mathematics and Complex Systems,
Ministry of Education, Beijing 100875, People's Republic of
China

\smallskip

\noindent{\it E-mails}: \texttt{wusq@mail.bnu.edu.cn} (S. Wu)

\hspace{1.08cm}\texttt{dcyang@bnu.edu.cn} (D. Yang)

\bigskip

\end{document}